\documentclass[10pt, a4paper, reqno, captions=tableheading]{scrartcl}

\usepackage[T1]{fontenc}
\usepackage{lmodern}
\usepackage[english]{babel}
\hyphenation{IN-TER-NA-ZIO-NA-LE}
\usepackage{amsfonts}
\usepackage{amsmath} 
\usepackage{amssymb}
\usepackage{amsthm}
\usepackage{mathrsfs}
\usepackage[dvipsnames,svgnames]{xcolor}
\usepackage{booktabs}
\usepackage{float}
\usepackage[backgroundcolor=blue!10]{todonotes}
\usepackage{enumitem}
\usepackage{graphicx}
\usepackage{tikz,pgf}
\usetikzlibrary{calc}
\usetikzlibrary{decorations.markings}
\usetikzlibrary{shapes.geometric}
\usetikzlibrary{patterns}
\usetikzlibrary{intersections}
\usepackage{multirow}
\usepackage{array}
\usepackage{adjustbox}
\usepackage{calc}
\usepackage{hyperref}
\hypersetup{
    colorlinks,
    linkcolor={red!50!black},
    citecolor={blue!50!black},
    urlcolor={blue!80!black}
}

% The following command is a better version of \phantom and requires the "calc" package
% Credits: Jean-Côme Charpentier & Scott Pakin
% Source: comp.text.tex, "Phantom-ish command"
% usage: \phantomas[l]{phantom words which will be overwritten}{with these words}
% the optional parameter [l] says that the words "with these words" will appear aligned left to the reserved space
% another optional parameter is [r] for aligning the words right
% if no optional parameter is given, the words will be centred in the reserved space
\newcommand*\phantomas[3][c]{%
\ifmmode
\makebox[\widthof{$#2$}][#1]{$#3$}%
\else
\makebox[\widthof{#2}][#1]{#3}%
\fi
}

\linespread{1.2}
\setlength{\parindent}{0pt}
\setlength{\parskip}{.25em}

\theoremstyle{plain}
\newtheorem{lemma}{Lemma}[section]
\newtheorem{proposition}[lemma]{\textbf{Proposition}}

\newtheorem{fact}[lemma]{\textbf{Fact}}
\newtheorem{assumption}[lemma]{\textbf{Assumption}}
\newtheorem*{reduction}{\textbf{Reduction}}
\theoremstyle{definition}
\newtheorem{definition}[lemma]{\textbf{Definition}}

\newtheorem*{notation}{\textbf{Notation}}
\newtheorem{remark}[lemma]{Remark}

\newcommand{\R}{\mathbb{R}}
\newcommand{\C}{\mathbb{C}}

\newcommand{\p}{\mathbb{P}}

\newcommand{\Bond}{\mathbf{B}}
\newcommand{\ObondA}{X}
\newcommand{\ObondB}{Y}
\newcommand{\ObondC}{Z}
\newcommand{\ObondAc}{\bar\ObondA}
\newcommand{\ObondBc}{\bar\ObondB}
\newcommand{\ObondCc}{\bar\ObondC}

% octahedron graph and edge graph
\newcommand{\oct}{\mathrm{oct}}
\newcommand{\GOct}{\mathbb{G}_{\oct}}
\newcommand{\GOctDir}{\vec{\mathbb{G}}_{\oct}}
\newcommand{\GEdg}{\mathbb{G}_{\mathrm{edg}}}

\newcommand{\M}{\overline{\mscr{M}}}

\newcommand{\mscr}{\mathscr}
\newcommand{\mcal}{\mathcal}

\renewcommand{\hat}{\widehat}

% notation for cuts and divisors of quadrilaterals in M_{0,8}
\newcommand{\cuts}[1]{\mathrm{#1}}
\newcommand{\divou}[1]{D_\cuts{ou}^{#1}}
\newcommand{\diveu}[1]{D_\cuts{eu}^{#1}}
\newcommand{\divom}[1]{D_\cuts{om}^{#1}}
\newcommand{\divem}[1]{D_\cuts{em}^{#1}}

% types of quadrilaterals
\newcommand{\casesQ}[1]{$\mathfrak{#1}$}
\newcommand{\caseG}{\casesQ{g}}
\newcommand{\caseO}{\casesQ{o}}
\newcommand{\caseE}{\casesQ{e}}
\newcommand{\caseR}{\casesQ{r}}
\newcommand{\caseL}{\casesQ{l}}

\colorlet{ecol}{black!50!white}
\definecolor{colR}{rgb}{.932,.172,.172} %x11 Firebrick2
\definecolor{colB}{rgb}{.255,.41,.884} %svgnames RoyalBlue

\colorlet{col1}{DarkGoldenrod}
\colorlet{col2}{DarkKhaki}
\colorlet{col3}{MediumPurple}
\colorlet{col4}{IndianRed}
\colorlet{col5}{SkyBlue!90!black}
\colorlet{col6}{MediumSeaGreen}

\colorlet{cole1}{CornflowerBlue}
\colorlet{cole2}{CornflowerBlue!60!black}
\colorlet{cole3}{SeaGreen} 
\colorlet{cole4}{Coral}
\colorlet{cole5}{DarkSeaGreen} 
\colorlet{cole6}{Coral!70!black} 

\colorlet{colds}{MediumSeaGreen} % color for same direction
\colorlet{coldo}{IndianRed} % color for opposite direction
\colorlet{colpb}{CornflowerBlue} % pyramidal bonds
\colorlet{colpbfixed}{CornflowerBlue} % pyramidal bonds, fixed vertex

\tikzstyle{vertex}=[circle, draw, fill=black, inner sep=0pt, minimum size=4pt]
\tikzstyle{svertex}=[circle, draw, fill=black, inner sep=0pt, minimum size=2pt] % small figures
\tikzstyle{edge}=[line width=1.5pt,ecol]
\tikzstyle{redge}=[edge,colR]
\tikzstyle{bedge}=[edge,colB]

\tikzstyle{labelsty}=[font=\scriptsize]

\tikzstyle{dedge}=[%directed edge
	edge,%
	postaction={%
		decoration={markings,
			mark=at position 0.5
			with {
				\node[dart,fill,inner sep=1pt,minimum size=6pt,transform shape] (0,0) {};	
			},
		},%
	decorate}%
]

\tikzstyle{sdedge}=[%directed edge small figuresif
	edge,%
	postaction={%
		decoration={markings,
			mark=at position 0.5
			with {
				\node[dart,fill,inner sep=1pt,minimum size=4pt,transform shape] (0,0) {};
			},
		},%
	decorate}%
]

% labels for quadrilaterals in \GEdg
\newcommand{\quadrefA}{\tikz[baseline={(0,-0.1)}]{\node[rectangle,thick,draw=black,minimum size=0.4cm] at (0,0) {\textnormal{1}};}}
\newcommand{\quadrefB}{\tikz[baseline={(0,-0.1)}]{\node[rectangle,thick,draw=black,minimum size=0.4cm] at (0,0) {\textnormal{2}};}}
\newcommand{\quadrefC}{\tikz[baseline={(0,-0.1)}]{\node[rectangle,thick,draw=black,minimum size=0.4cm] at (0,0) {\textnormal{3}};}}
\newcommand{\quadrefD}{\tikz[baseline={(0,-0.1)}]{\node[rectangle,thick,draw=black,minimum size=0.4cm] at (0,0) {\textnormal{4}};}}
\newcommand{\quadrefE}{\tikz[baseline={(0,-0.1)}]{\node[rectangle,thick,draw=black,minimum size=0.4cm] at (0,0) {\textnormal{5}};}}
\newcommand{\quadrefF}{\tikz[baseline={(0,-0.1)}]{\node[rectangle,thick,draw=black,minimum size=0.4cm] at (0,0) {\textnormal{6}};}}
\newcommand{\quadrefV}{\tikz[baseline={(0,-0.1)}]{\node[rectangle,thick,draw=black,minimum size=0.4cm] at (0,0) {$v$};}}
\newcommand{\quadrefW}{\tikz[baseline={(0,-0.1)}]{\node[rectangle,thick,draw=black,minimum size=0.4cm] at (0,0) {$w$};}}

% Pyramidal bonds pictiure notation
\tikzstyle{pvertex}=[circle, draw, fill=black, inner sep=0pt, minimum size=1.75pt]
\tikzstyle{pedge}=[line width=1pt,ecol]
\tikzstyle{pdedge}=[%directed edge small figuresif
	pedge,%
	postaction={%
		decoration={markings,
			mark=at position 0.5
			with {
				\node[dart,fill,inner sep=1pt,minimum size=4pt,transform shape] (0,0) {};
			},
		},%
	decorate}%
]
\newcommand{\pbondar}{\tikz[scale=0.75]{ % pyramidal bonds with directions on antidiagonal to the right
	\node[pvertex] (a) at (0,0) {};
	\node[pvertex] (b) at (1,0) {};
	\node[pvertex] (c) at (1,1) {};
	\node[pvertex] (d) at (0,1) {};
	\node[pvertex] (e) at (0.5,0.5) {};
	\draw[pedge] (a)edge(b) (b)edge(c) (c)edge(d) (d)edge(a);
	\draw[pedge] (b)--(e) (d)--(e);
	\draw[pdedge,colpb] (a)--(e);
	\draw[pdedge,colpb] (e)--(c);
}}
\newcommand{\pbondal}{\tikz[scale=0.75]{% pyramidal bonds with directions on antidiagonal to the left
	\node[pvertex] (a) at (0,0) {};
	\node[pvertex] (b) at (1,0) {};
	\node[pvertex] (c) at (1,1) {};
	\node[pvertex] (d) at (0,1) {};
	\node[pvertex] (e) at (0.5,0.5) {};
	\draw[pedge] (a)edge(b) (b)edge(c) (c)edge(d) (d)edge(a);
	\draw[pedge] (b)--(e) (d)--(e);
	\draw[pdedge,colpb] (e)--(a);
	\draw[pdedge,colpb] (c)--(e);
}}
\newcommand{\pbonddr}{\tikz[scale=0.75]{% pyramidal bonds with directions on diagonal to the right
	\node[pvertex] (a) at (0,0) {};
	\node[pvertex] (b) at (1,0) {};
	\node[pvertex] (c) at (1,1) {};
	\node[pvertex] (d) at (0,1) {};
	\node[pvertex] (e) at (0.5,0.5) {};
	\draw[pedge] (a)edge(b) (b)edge(c) (c)edge(d) (d)edge(a);
	\draw[pedge] (a)--(e) (c)--(e);
	\draw[pdedge,colpb] (d)--(e);
	\draw[pdedge,colpb] (e)--(b);
}}
\newcommand{\pbonddl}{\tikz[scale=0.75]{% pyramidal bonds with directions on diagonal to the left
	\node[pvertex] (a) at (0,0) {};
	\node[pvertex] (b) at (1,0) {};
	\node[pvertex] (c) at (1,1) {};
	\node[pvertex] (d) at (0,1) {};
	\node[pvertex] (e) at (0.5,0.5) {};
	\draw[pedge] (a)edge(b) (b)edge(c) (c)edge(d) (d)edge(a);
	\draw[pedge] (a)--(e) (c)--(e);
	\draw[pdedge,colpb] (e)--(d);
	\draw[pdedge,colpb] (b)--(e);
}}
\newcommand{\pbondac}{\tikz[scale=0.75]{% pyramidal bonds with directions on antidiagonal to the center
	\node[pvertex] (a) at (0,0) {};
	\node[pvertex] (b) at (1,0) {};
	\node[pvertex] (c) at (1,1) {};
	\node[pvertex] (d) at (0,1) {};
	\node[pvertex] (e) at (0.5,0.5) {};
	\draw[pedge] (a)edge(b) (b)edge(c) (c)edge(d) (d)edge(a);
	\draw[pedge] (b)--(e) (d)--(e);
	\draw[pdedge,colpb] (a)--(e);
	\draw[pdedge,colpb] (c)--(e);
}}
\newcommand{\pbondao}{\tikz[scale=0.75]{% pyramidal bonds with directions on antidiagonal outwards
	\node[pvertex] (a) at (0,0) {};
	\node[pvertex] (b) at (1,0) {};
	\node[pvertex] (c) at (1,1) {};
	\node[pvertex] (d) at (0,1) {};
	\node[pvertex] (e) at (0.5,0.5) {};
	\draw[pedge] (a)edge(b) (b)edge(c) (c)edge(d) (d)edge(a);
	\draw[pedge] (b)--(e) (d)--(e);
	\draw[pdedge,colpb] (e)--(a);
	\draw[pdedge,colpb] (e)--(c);
}}
\newcommand{\pbonddc}{\tikz[scale=0.75]{% pyramidal bonds with directions on diagonal to the center
	\node[pvertex] (a) at (0,0) {};
	\node[pvertex] (b) at (1,0) {};
	\node[pvertex] (c) at (1,1) {};
	\node[pvertex] (d) at (0,1) {};
	\node[pvertex] (e) at (0.5,0.5) {};
	\draw[pedge] (a)edge(b) (b)edge(c) (c)edge(d) (d)edge(a);
	\draw[pedge] (a)--(e) (c)--(e);
	\draw[pdedge,colpb] (d)--(e);
	\draw[pdedge,colpb] (b)--(e);
}}
\newcommand{\pbonddo}{\tikz[scale=0.75]{% pyramidal bonds with directions on diagonal outwards
	\node[pvertex] (a) at (0,0) {};
	\node[pvertex] (b) at (1,0) {};
	\node[pvertex] (c) at (1,1) {};
	\node[pvertex] (d) at (0,1) {};
	\node[pvertex] (e) at (0.5,0.5) {};
	\draw[pedge] (a)edge(b) (b)edge(c) (c)edge(d) (d)edge(a);
	\draw[pedge] (a)--(e) (c)--(e);
	\draw[pdedge,colpb] (e)--(d);
	\draw[pdedge,colpb] (e)--(b);
}}
% Pyramidal bonds short notations
\tikzstyle{ppedge}=[line width=0.75pt,ecol]
\tikzstyle{ppdedge}=[%directed edge small figuresif
	ppedge,%
	postaction={%
		decoration={markings,
			mark=at position 0.75
			with {
				\node[dart,fill,inner sep=0.6pt,minimum size=2pt,transform shape] (0,0) {};
			},
		},%
	decorate}%
]
\newcommand{\Pbondar}{\!\!\tikz[scale=0.25]{ % pyramidal bonds with directions on antidiagonal to the right
	\coordinate (a) at (0,0);
	\coordinate (b) at (1,0);
	\coordinate (c) at (1,1);
	\coordinate (d) at (0,1);
	\coordinate (e) at (0.5,0.5);
	\draw[ppdedge,black] (a)--(e);
	\draw[ppdedge,black] (e)--(c);
}}
\newcommand{\Pbondal}{\!\!\tikz[scale=0.25]{ % pyramidal bonds with directions on antidiagonal to the left
	\coordinate (a) at (0,0);
	\coordinate (b) at (1,0);
	\coordinate (c) at (1,1);
	\coordinate (d) at (0,1);
	\coordinate (e) at (0.5,0.5);
	\draw[ppdedge,black] (e)--(a);
	\draw[ppdedge,black] (c)--(e);
}}
\newcommand{\Pbonddr}{\tikz[scale=0.25]{ % pyramidal bonds with directions on diagonal to the right
	\coordinate (a) at (0,0);
	\coordinate (b) at (1,0);
	\coordinate (c) at (1,1);
	\coordinate (d) at (0,1);
	\coordinate (e) at (0.5,0.5);
	\draw[ppdedge,black] (d)--(e);
	\draw[ppdedge,black] (e)--(b);
}}
\newcommand{\Pbonddl}{\tikz[scale=0.25]{ % pyramidal bonds with directions on diagonal to the left
	\coordinate (a) at (0,0);
	\coordinate (b) at (1,0);
	\coordinate (c) at (1,1);
	\coordinate (d) at (0,1);
	\coordinate (e) at (0.5,0.5);
	\draw[ppdedge,black] (e)--(d);
	\draw[ppdedge,black] (b)--(e);
}}
\newcommand{\Pbondac}{\!\!\tikz[scale=0.25]{ % pyramidal bonds with directions on antidiagonal to the center
	\coordinate (a) at (0,0);
	\coordinate (b) at (1,0);
	\coordinate (c) at (1,1);
	\coordinate (d) at (0,1);
	\coordinate (e) at (0.5,0.5);
	\draw[ppdedge,black] (a)--(e);
	\draw[ppdedge,black] (c)--(e);
}}
\newcommand{\Pbondao}{\!\!\tikz[scale=0.25]{ % pyramidal bonds with directions on antidiagonal to outwards
	\coordinate (a) at (0,0);
	\coordinate (b) at (1,0);
	\coordinate (c) at (1,1);
	\coordinate (d) at (0,1);
	\coordinate (e) at (0.5,0.5);
	\draw[ppdedge,black] (e)--(a);
	\draw[ppdedge,black] (e)--(c);
}}
\newcommand{\Pbonddc}{\tikz[scale=0.25]{ % pyramidal bonds with directions on diagonal to the center
	\coordinate (a) at (0,0);
	\coordinate (b) at (1,0);
	\coordinate (c) at (1,1);
	\coordinate (d) at (0,1);
	\coordinate (e) at (0.5,0.5);
	\draw[ppdedge,black] (b)--(e);
	\draw[ppdedge,black] (d)--(e);
}}
\newcommand{\Pbonddo}{\tikz[scale=0.25]{ % pyramidal bonds with directions on diagonal to outwards
	\coordinate (a) at (0,0);
	\coordinate (b) at (1,0);
	\coordinate (c) at (1,1);
	\coordinate (d) at (0,1);
	\coordinate (e) at (0.5,0.5);
	\draw[ppdedge,black] (e)--(b);
	\draw[ppdedge,black] (e)--(d);
}}
\newcommand{\Pyr}{\mathbf{P}}

\title{Combinatorics of Bricard's octahedra}
\date{}

\author{%
Matteo Gallet$^{\ast,\diamond}$%
\and
Georg Grasegger$^{\ast,\triangleright}$%
\and
Jan Legersk\'y$^{\circ}$%
\and
Josef Schicho$^{\ast, \circ}$%
}
\renewcommand{\thefootnote}{\fnsymbol{footnote}}

\begin{document}
\maketitle
\footnotetext{\hspace{0.15cm}$^\ast$ Supported by the Austrian Science Fund (FWF): W1214-N15, project DK9.\\%
$^\circ$ Supported by the Austrian Science Fund (FWF): P31061.\\%
$^\diamond$ Supported by the Austrian Science Fund (FWF): Erwin Schr\"odinger Fellowship J4253.\\%
$^\triangleright$ Supported by the Austrian Science Fund (FWF): P31888.%
}
\begin{abstract}
 We re-prove the classification of flexible octahedra, 
 obtained by Bricard at the beginning of the XX century, 
 by means of combinatorial objects satisfying some elementary rules. 
 The explanations of these rules rely on the use of a well-known creation of modern algebraic geometry, 
 the moduli space of stable rational curves with marked points, for the description of configurations of graphs on the sphere.
 Once one accepts the objects and the rules, the classification becomes elementary (though not trivial) 
 and can be enjoyed without the need of a very deep background on the topic. 
\end{abstract}
\renewcommand{\thefootnote}{\arabic{footnote}}

\section{Introduction}
\label{introduction}
 
Cauchy proved \cite{Cauchy1813} that every convex polyhedron is rigid, in the sense that it cannot move keeping the shape of its faces. 
Hence flexible polyhedra must be concave, and indeed Bricard discovered 
\cite{Bricard1897, Bricard1926, Bricard1927} three families of concave flexible octahedra. 
Lebesgue lectured about Bricard's construction in 1938/39 \cite{Lebesgue1967}, 
and Bennett discussed flexible octahedra in his work~\cite{Bennett1912}. 
In recent years, there has been renewed interest in the topic; 
see the works of Baker \cite{Baker1980, Baker1996, Baker2009}, Stachel \cite{Stachel1987, Stachel2014, Stachel2015}, Nawratil \cite{Nawratil2010, Nawratil2018}, 
and others \cite{Connelly1978, Bushmelev1990, Mikhalev2002, Alexandrov2010, Alexandrov2011, Nelson2010, Nelson2012, Chen2012}.
The goal of this paper is to re-prove Bricard's result by employing modern techniques in algebraic geometry that hopefully may be applied to more general situations.

The three families of flexible octahedra are the following (see Figures~\ref{figure:bricard_typesIandII} and~\ref{figure:bricard_typeIII}):
\begin{description}
 \item[Type I.] Octahedra whose vertices form three pairs of points symmetric with respect to a line.
 \item[Type II.] Octahedra whose vertices are given by two pairs of points symmetric with respect to a plane passing through the last two vertices.
 \item[Type III.] Octahedra all of whose pyramids\footnote{Here by ``pyramid'' we mean a $4$-tuple of edges sharing a vertex. See Definition~\ref{definition:pyramid} for formal specification and notation.} have the following property: the two pairs of opposite angles%
 \footnote{Here by ``angle'' of a pyramid we mean the angle formed by two concurrent edges belonging to the same face.} %
 are constituted of angles that are either both equal or both supplementary; 
 moreover, we ask the lengths~$\ell_{ij}$ of the edges%
 \footnote{Here we label the vertices of the octahedron by the numbers $\{1, \dotsc, 6\}$.} %
 to satisfy three linear equations of the form:
\begin{align*}
 \eta_{35} \, \ell_{35} + \eta_{45} \, \ell_{45} + \eta_{46} \, \ell_{46} + \eta_{36} \, \ell_{36} &= 0 \,, \\
 \eta_{14} \, \ell_{14} + \eta_{24} \, \ell_{24} + \eta_{23} \, \ell_{23} + \eta_{13} \, \ell_{13} &= 0 \,, \\
 \eta_{15} \, \ell_{15} + \eta_{25} \, \ell_{25} + \eta_{26} \, \ell_{26} + \eta_{16} \, \ell_{16} &= 0 \,,
\end{align*}
where $\eta_{ij} \in \{1,-1\}$ and in each equation we have exactly two positive~$\eta_{ij}$ and two negative ones.
\end{description}
\begin{figure}
\centering
 \tikz[baseline={(0,0)}]{\node at (0,0) {\includegraphics[width=.4\textwidth]{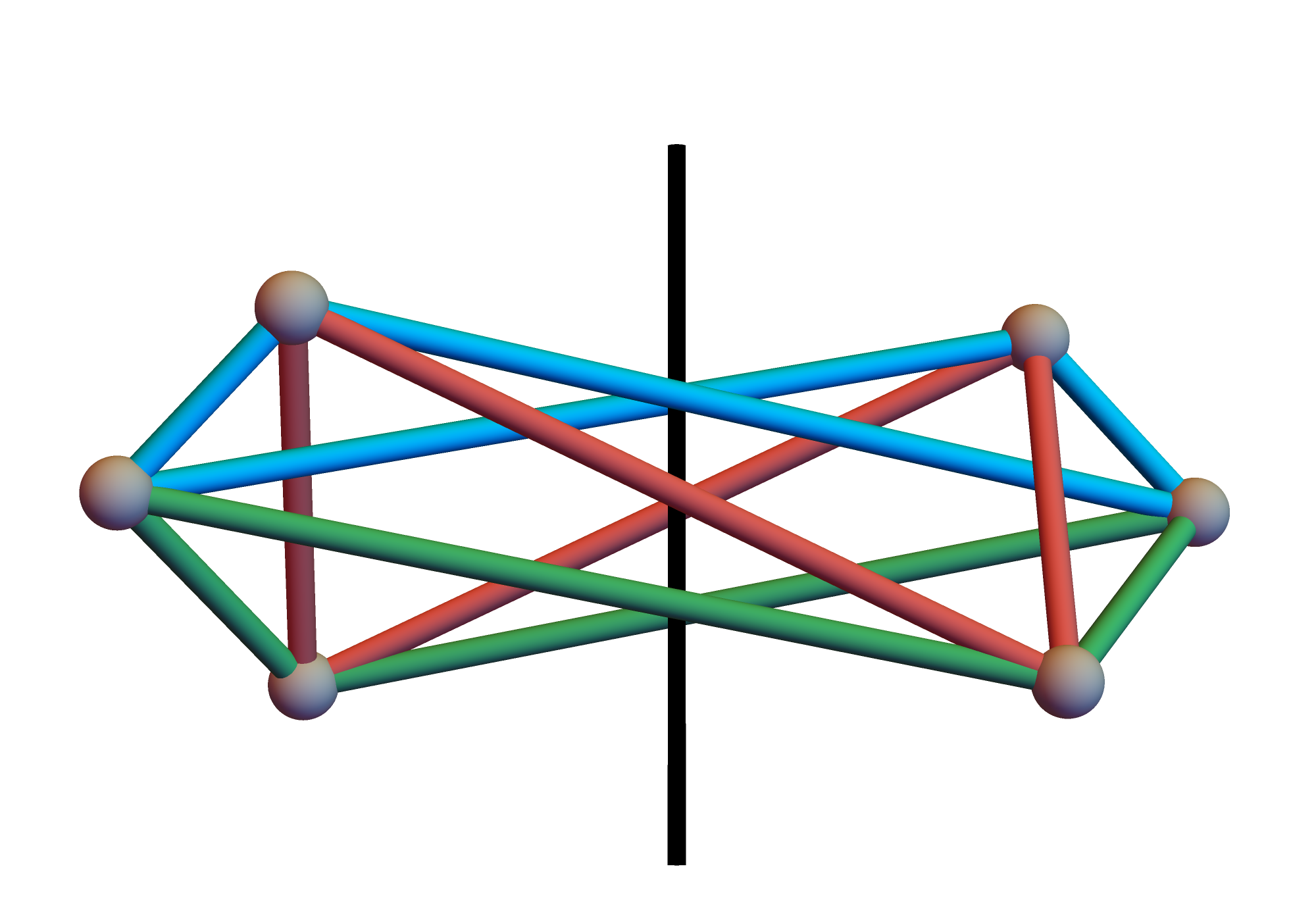}}}
 \tikz[baseline={(0,0)}]{\node at (0,0) {\includegraphics[width=.4\textwidth]{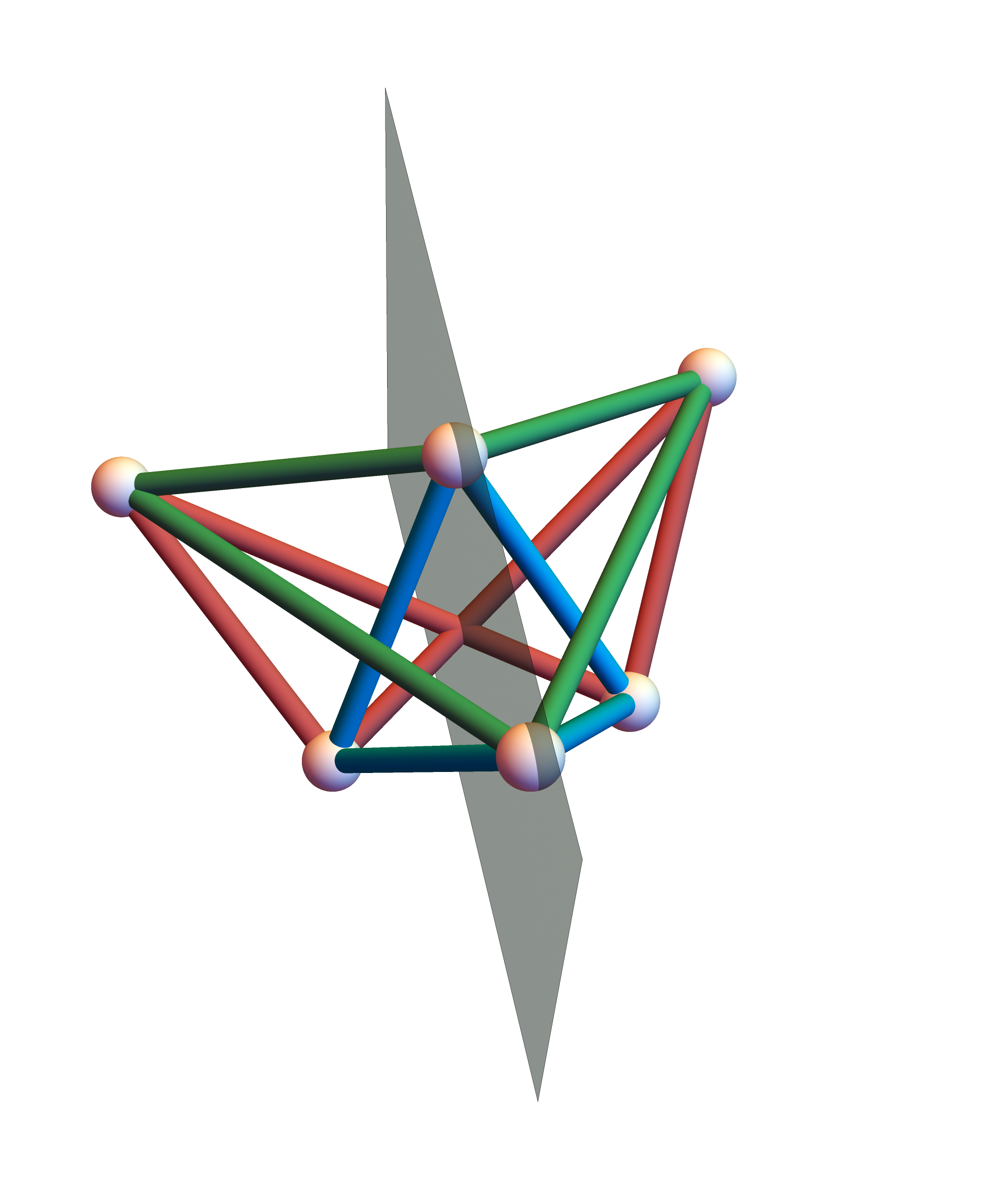}}}
 \caption{Flexible octahedra of Type I and Type II found by Bricard.}
 \label{figure:bricard_typesIandII}
\end{figure}

\begin{figure}
\centering
 \rotatebox{90}{\includegraphics[width=.4\textwidth]{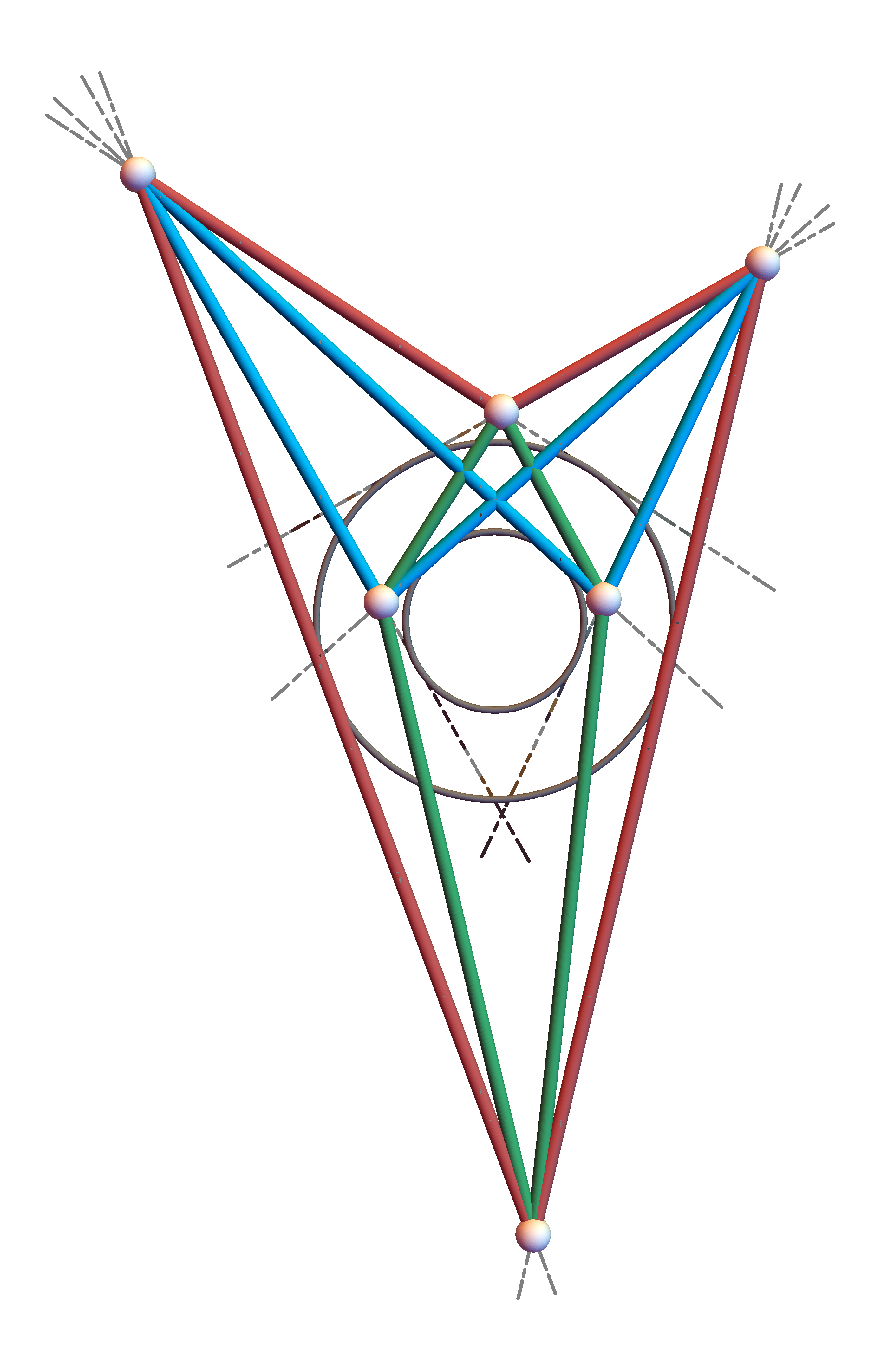}}
 \caption{Bricard found that certain flexible octahedra of Type III admit the following construction: in the plane, pick two points~$A_1$ and $A_2$ and two circles, and draw the tangent lines to the circles passing through the points~$A_i$. These lines determine four other points $B_1$, $B_2$, $C_1$ and $C_2$, which together with $A_1$ and $A_2$ define a flat realization of a Type III octahedron.}
 \label{figure:bricard_typeIII}
\end{figure}

Animations of the motions of each of the three families can be found at \\
\makebox[\textwidth]{\url{https://jan.legersky.cz/project/bricard_octahedra/}.}

The fact that an octahedron with line symmetry is flexible is well-known 
(see, for example, \cite[Section~5]{Schulze2010}) 
and follows from a count of the free parameters versus the number of equations imposed by the edges.
A similar argument also shows that plane-symmetric octahedra are flexible.
Proving that Type III octahedra are flexible is more complicated, and for this we refer to the proof given by Lebesgue (see~\cite{Lebesgue1967}).

The technique we adopt to analyze flexible octahedra is to reduce to the case of flexible spherical linkages,
and to use the tools developed in our previous work~\cite{Gallet2019} to derive the classification. 
More precisely, our work consists of two parts: in the first part, we prove some elementary facts about flexible octahedra
and we provide their classification by using combinatorial objects called \emph{octahedral} and \emph{pyramidal bonds}, and rules that relate them; 
in the second part, we explain the rules via the theory developed in~\cite{Gallet2019} on flexible graphs on the sphere.
The first part is rather nontechnical and aimed at a general public; 
the second part involves more technicalities and requires some acquaintance with the material from~\cite{Gallet2019} for the detailed justification of the arguments.
By splitting the text in such a way we hope to widen the possible readership to those readers 
who may not be extremely interested in the specific details of the algebro-geometric part of the proof but are fascinated by this old topic;
at the same time, we hope to convince them that the techniques we introduce by employing objects from modern algebraic geometry
may be well-suited for these classical questions, and may have the chance to shed light on related topics that have not been fully investigated yet.

The paper is structured as follows. 
Section~\ref{elementary} reports elementary results on flexible octahedra.
Section~\ref{classification} provides the classification of flexible octahedra by introducing octahedral and pyramidal bonds,
and by setting up, in an axiomatic way, the rules that guide their behavior.
The constraints imposed by these rules are then used to classify flexible octahedra.
Section~\ref{reduction} describes how realizations of octahedra in the space determine realizations on the unit sphere
of the graph whose vertices are the edges of the octahedron, and whose edges encode the fact that edges of the octahedron lie on the same face.
This opens the way to the use of the methods developed by the authors in~\cite{Gallet2019}, namely to the study of flexible graphs on the sphere.
Section~\ref{justifications} provides the precise background for the notion of bonds,
and justifies the rules in Section~\ref{classification} via the techniques from~\cite{Gallet2019}.

\section{Elementary results on flexible octahedra}
\label{elementary}

In this section we collect some results about the motions of octahedra, which we use in later sections.
The results are known and elementary; we report them here mainly for self-containedness.

We represent the combinatorial structure of an octahedron by the graph~$\GOct$ with vertices $\{1, \dotsc, 6\}$ and edge set~$E_{\oct}$
given by all unordered pairs~$\{ i, j \}$ where $i, j \in \{1, \dotsc, 6\}$ except for $\{1,2\}$, $\{3,4\}$, and $\{5,6\}$ (see Figure~\ref{figure:Goct}).
\begin{figure}[ht]
	\centering
		\begin{tikzpicture}[scale=1.5]
			\coordinate (o) at (0,0);
				\node[vertex,label={[labelsty]right:$1$}] (1) at (1,0) {};
				\node[vertex,label={[labelsty,label distance=-2pt]right:$4$},rotate around={60:(o)}] (4) at (1) {};
				\node[vertex,label={[labelsty]right:$5$},rotate around={120:(o)}] (5) at (1) {};
				\node[vertex,label={[labelsty]left:$2$}] (2) at (-1,0) {};
				\node[vertex,label={[labelsty]left:$3$},rotate around={60:(o)}] (3) at (2) {};	
				\node[vertex,label={[labelsty,label distance=-2pt]left:$6$},rotate around={120:(o)}] (6) at (2) {};
				\draw[edge] (1) -- (6);
				\draw[edge] (1) -- (3);
				\draw[edge] (4) -- (1);
				\draw[edge] (5) -- (1); 
				\draw[edge] (6) -- (3);
				\draw[edge] (4) -- (6);
				\draw[edge] (6) -- (2);
				\draw[edge] (3) -- (5);
				\draw[edge] (3) -- (2);
				\draw[edge] (5) -- (4);
				\draw[edge] (2) -- (4);
				\draw[edge] (2) -- (5);
		\end{tikzpicture}
 \caption{The octahedral graph~$\GOct$.}
 \label{figure:Goct}
\end{figure}

\begin{definition}
A \emph{realization} of an octahedron is a map $\rho \colon \{1, \dotsc, 6\} \longrightarrow \R^3$.
A \emph{labeling} is a map $\lambda \colon E_{\oct} \longrightarrow \R_{>0}$; we use the notation $\lambda_{\{i,j\}}$ for~$\lambda(\{i,j\})$. 
A realization~$\rho$ is \emph{compatible} with a labeling~$\lambda$ if $\left\| \rho(i) - \rho(j) \right\| = \lambda_{\{i,j\}}$ for all $\{i,j\} \in E_{\oct}$. 
A labeling~$\lambda$ is called \emph{flexible} if there exist infinitely many non-congruent realizations compatible with~$\lambda$. 
Two realizations~$\rho_1$ and~$\rho_2$ are called \emph{congruent} if there exists an isometry~$\sigma$ of~$\R^3$ such that $\rho_1 = \sigma \circ \rho_2$.
\end{definition}

We formalize the intuitive notion of \emph{motion} of an octahedron by exploiting the fact 
that being compatible with a labeling imposes polynomial constraints on realizations.
Notice that, since we can always apply rotations or translations to a given realization to obtain another compatible realization, 
once we have a compatible realization, we actually have a $6$-dimensional set of compatible ones.
Therefore motions are asked to be $7$-dimensional objects, in order to encode octahedra
that move with \emph{one degree of freedom}, up to rotations and translations. 

\begin{definition}
 Given a flexible labeling of an octahedron, the set of realizations compatible with it is an algebraic variety.
 Indeed, all realizations are a real vector space of dimension $3 \times 6 = 18$,
 and being compatible with a given labeling imposes polynomial conditions.
 A \emph{motion} of the octahedron is a $7$-dimensional irreducible component of this algebraic variety.
\end{definition}

The goal of this paper is to classify flexible octahedra satisfying the following genericity assumption.

%%%%%%%%%%
% Code for having a letter as label of the assumption
%%%%%%%%%%
\newcounter{tmp}
\begingroup
\setcounter{tmp}{\value{lemma}}
\setcounter{lemma}{6}
\renewcommand\thelemma{\Alph{lemma}}
\begin{assumption}
\label{assumption}
 No two faces of the octahedron are coplanar for a general realization in a motion.
\end{assumption}
\endgroup
\setcounter{lemma}{\thetmp}
%%%%%%%%%%%
% End of code
%%%%%%%%%%%

\begin{remark}
\label{remark:degenerate}
 An elementary, but tedious, inspection of all possibilities shows that if a triangle in an octahedron degenerates, 
 then the octahedron does not have flexible labelings.
 Moreover, as a corollary of the assumption we have that no two vertices of the octahedron coincide for a general realization in a motion.
\end{remark}

A key object in our proof of the classification of octahedra are \emph{pyramids}.

\begin{definition}
\label{definition:pyramid}
 Pyramids are subgraphs of~$\GOct$ induced by a vertex and its four neighbors.
 The pyramid determined by~$v$ is denoted by~\quadrefV.
 Realizations of pyramids, their congruence, and flexibility are defined analogously as for octahedra.
 The same happens for motions.
 Here we make a similar request as in Assumption~\ref{assumption}, namely we do not allow motions
 for which two triangular faces of a pyramid are coplanar in a general realization of that motion.
\end{definition}

Flexible pyramids come in four families (here by an \emph{angle} of a pyramid~\quadrefV\ 
we mean an angle between edges of the form~$\{u,v\}$ and~$\{w,v\}$, where $u$ and~$w$ are neighbors):
\begin{description}
 \item[deltoids:] here two disjoint pairs of adjacent angles are constituted of angles that are either both equal or both supplementary;
 \item[rhomboids:] here the two pairs of opposite angles are constituted of angles that are either both equal or both supplementary;
 \item[lozenges:] here none of the angles equals $\pi/2$ and either all angles are equal, or two are equal and the other two are each supplementary to the first two\footnote{When all the angles are $\pi/2$, any motion of such a pyramid is degenerate, namely two triangular faces stay coplanar during the motion.};
  \item[general:] here are flexible pyramids such that not all angles equal~$\pi/2$ and not falling in the one of the previous families.
\end{description}

\begin{figure}[ht]
 \centering
 \includegraphics[width=4cm]{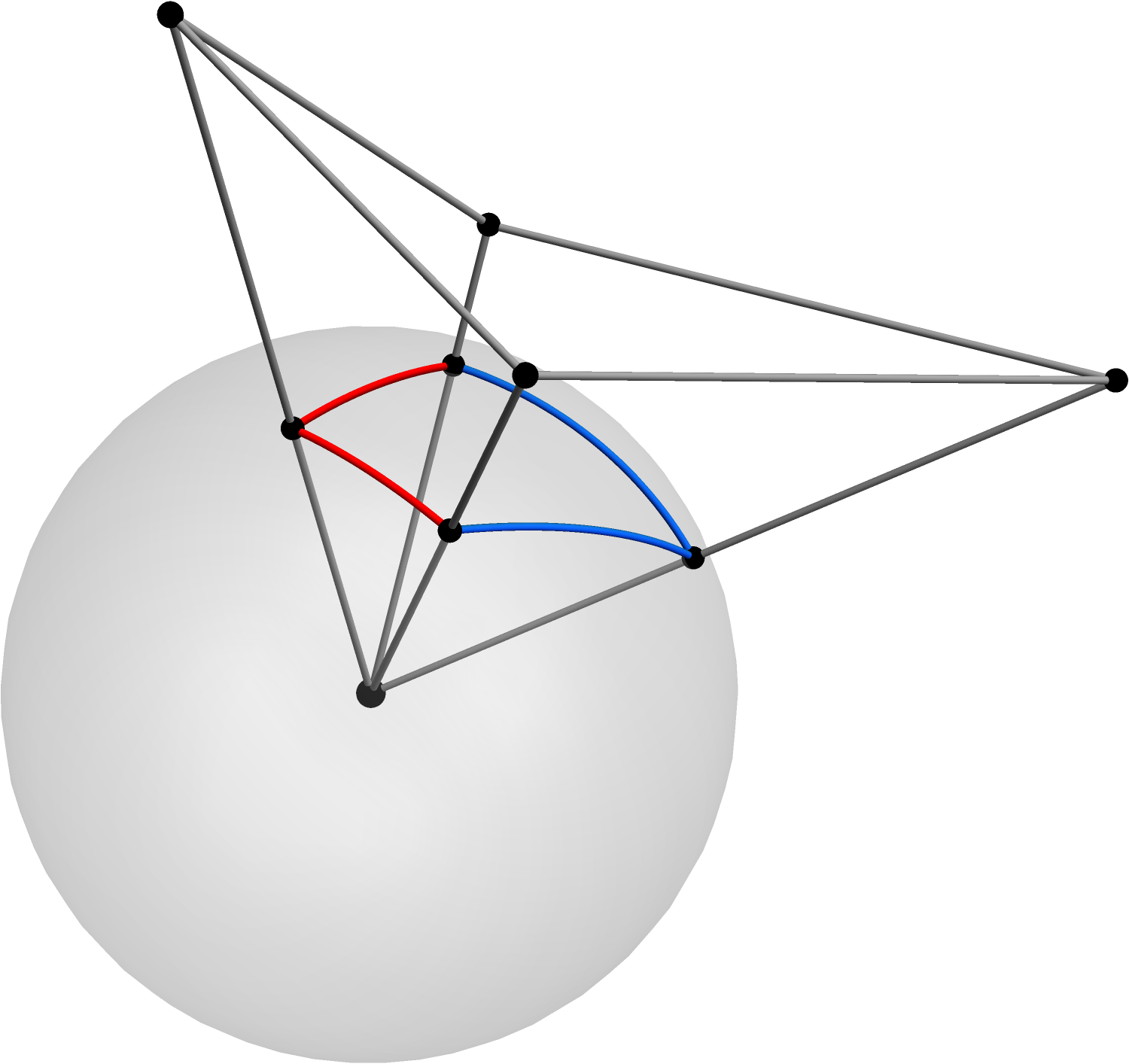}
 \quad
 \includegraphics[width=4cm]{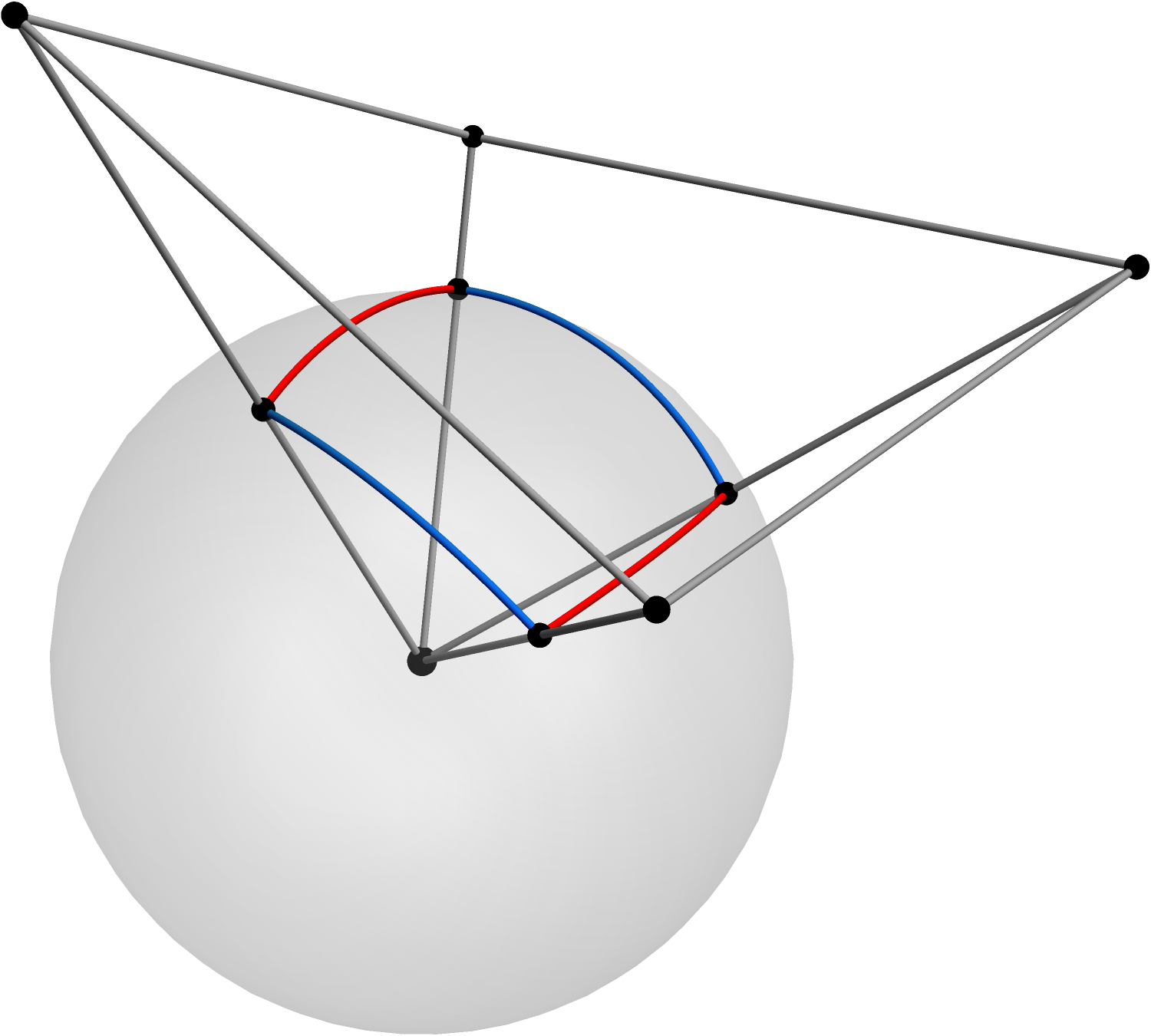}
 \caption{Examples of a deltoid (on the left) and of a rhomboid (on the right). 
 The intersection of the pyramid with the sphere highlights which pairs of angles are equal.}
 \label{figure:pyramid_families}
\end{figure}

\begin{remark}
 Given these definitions, we can say that a Type III octahedron is an octahedron where all pyramids are rhomboids or lozenges,
 and for which the equations on the edge lengths from the introduction hold.
\end{remark}

\subsection{Planar realizations of pyramids}
\label{elementary:pyramids}

Hereafter we list some elementary properties of pyramids, in particular concerning their planar (also called \emph{flat}) realizations.

\begin{fact}
 Deltoids and rhomboids have two flat realizations.
 Lozenges have three flat realizations.
 In the case of deltoids and lozenges, in these positions three vertices of the pyramid are collinear.
\end{fact}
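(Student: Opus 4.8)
The plan is to trivialize the question by translating a flat realization into a purely angular bookkeeping around the apex. Write the pyramid as~\quadrefV\ with apex~$v$ whose four neighbours form the rim cycle $a$--$b$--$c$--$d$--$a$ (so the edges are $\{v,a\},\{v,b\},\{v,c\},\{v,d\}$ and $\{a,b\},\{b,c\},\{c,d\},\{d,a\}$). In every realization the four faces $vab$, $vbc$, $vcd$, $vda$ are rigid triangles, since all three of their side lengths are prescribed by the labeling; hence the four ``angles of the pyramid'' $\theta_1=\angle avb$, $\theta_2=\angle bvc$, $\theta_3=\angle cvd$, $\theta_4=\angle dva$ are fixed numbers in~$(0,\pi)$, the same in every realization. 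Now fix a flat (planar) realization, put $v$ at the origin of~$\R^2$ and, after a rotation, put $a$ on the positive $x$-axis. Then the angular coordinates of $a,b,c,d$ are $\phi_a=0$, $\phi_b=\epsilon_1\theta_1$, $\phi_c=\epsilon_1\theta_1+\epsilon_2\theta_2$, $\phi_d=\epsilon_1\theta_1+\epsilon_2\theta_2+\epsilon_3\theta_3$ for some signs $\epsilon_i\in\{1,-1\}$ (each sign records on which side of the preceding ray the next rim vertex is placed), the radial coordinates being the prescribed lengths; and the requirement that the face $vda$ closes, i.e. $\phi_d\equiv\pm\theta_4\pmod{2\pi}$, reads
\[
 \epsilon_1\theta_1+\epsilon_2\theta_2+\epsilon_3\theta_3+\epsilon_4\theta_4\equiv 0\pmod{2\pi}
\]
for a suitable $\epsilon_4\in\{1,-1\}$. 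The reflection across the $x$-axis flips all four signs, so we may normalize $\epsilon_1=1$; since each $\theta_i\in(0,\pi)$ the left-hand side lies strictly between $-3\pi$ and $4\pi$, hence must equal $-2\pi$, $0$ or $2\pi$. Conversely each such sign choice yields an honest flat realization (the law of cosines in each face returns exactly the prescribed length of the corresponding rim edge), and --- the gauge now being completely fixed --- distinct admissible triples $(\epsilon_2,\epsilon_3,\epsilon_4)$ give non-congruent realizations. So flat realizations up to congruence correspond bijectively to the triples $(\epsilon_2,\epsilon_3,\epsilon_4)\in\{1,-1\}^3$ with $\theta_1+\epsilon_2\theta_2+\epsilon_3\theta_3+\epsilon_4\theta_4\in\{-2\pi,0,2\pi\}$.

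Then I would just count, family by family. For a deltoid we may assume (relabelling the rim cycle) that the two adjacent pairs are $\{\theta_1,\theta_2\}$ and $\{\theta_3,\theta_4\}$ and that either $\theta_1=\theta_2,\ \theta_3=\theta_4$, or $\theta_1+\theta_2=\theta_3+\theta_4=\pi$. In the first case the quantity above equals $(1+\epsilon_2)\theta_1+(\epsilon_3+\epsilon_4)\theta_3$, which lies in $\{-2\pi,0,2\pi\}$ exactly for the two triples $(-1,1,-1)$ and $(-1,-1,1)$ (value~$0$): any third admissible triple would force $\theta_1+\theta_3=\pi$, hence --- since not all $\theta_i$ equal $\pi/2$ --- make the pyramid a lozenge. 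The second case similarly retains exactly $(1,1,1)$ (value~$2\pi$) and $(1,-1,-1)$ (value~$0$). A parallel computation with the opposite pairs $\{\theta_1,\theta_3\},\{\theta_2,\theta_4\}$ in place of the adjacent ones shows that a rhomboid likewise has exactly two flat realizations, any third one again forcing it to be a lozenge. Finally a lozenge is, up to relabelling, either all $\theta_i=\phi\neq\pi/2$, in which case the admissible triples are $(1,-1,-1),(-1,1,-1),(-1,-1,1)$ (all of value~$0$; the candidate $(1,1,1)$ gives $4\phi$, which is $2\pi$ only for the excluded $\phi=\pi/2$), or $\theta_1=\theta_2=\phi$, $\theta_3=\theta_4=\pi-\phi$ with $\phi\neq\pi/2$, in which case they are $(1,1,1),(-1,1,-1),(-1,-1,1)$; either way there are three.

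For the collinearity claim I would read off the angular coordinates in the admissible triples found above. For a deltoid with $\theta_1=\theta_2$, both flat realizations have $\phi_c=\theta_1-\theta_2=0=\phi_a$, so $v,a,c$ lie on one ray; for a deltoid with $\theta_1+\theta_2=\pi$, both have $\phi_c=\theta_1+\theta_2=\pi$, so $v$ lies between $a$ and $c$ on a line; in all cases three vertices of the pyramid are collinear. The same inspection of the three lozenge realizations shows that each of them has either $\{v,a,c\}$ or $\{v,b,d\}$ collinear. (For contrast, in each of the two realizations of a rhomboid the four angular coordinates are pairwise distinct and pairwise non-antipodal unless the pyramid is a lozenge, so no collinearity is forced --- consistently with the statement, which singles out only deltoids and lozenges.)

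The step I expect to be delicate is the interface between the three families: the ``extra'' admissible sign triple that would push a deltoid's or a rhomboid's count past two is present exactly when the defining relations impose one further $\Z$-linear relation among the $\theta_i$, and that relation is precisely the one characterizing a lozenge. So the Fact becomes clean only once one fixes the convention --- built into the terminology --- that a lozenge is not counted among deltoids or rhomboids (and that the excluded all-right-angle configuration is not a flexible pyramid at all). Besides this, the one thing worth being careful about is that the reduction in the first paragraph is an honest bijection: after pinning $v$ at the origin, $a$ on the positive axis, and $\epsilon_1=1$, no residual isometry survives, so the sign triple $(\epsilon_2,\epsilon_3,\epsilon_4)$ really is a complete invariant of a flat realization.
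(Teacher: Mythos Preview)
The paper states this result as a \emph{Fact} without proof, remarking only that such statements are ``known and elementary'' and included ``mainly for self-containedness''. So there is no argument in the paper to compare against, and your write-up supplies exactly the kind of elementary verification the paper omits. The reduction you set up --- pin the apex at the origin, the first rim vertex on the positive axis, normalise $\epsilon_1=1$, and count sign triples $(\epsilon_2,\epsilon_3,\epsilon_4)$ satisfying $\theta_1+\epsilon_2\theta_2+\epsilon_3\theta_3+\epsilon_4\theta_4\in\{-2\pi,0,2\pi\}$ --- is sound: after this gauge-fixing no isometry survives, the angular coordinates $(\phi_b,\phi_c,\phi_d)$ are determined by $(\epsilon_2,\epsilon_3)$, and for each such pair at most one $\epsilon_4$ closes the loop (since $\theta_4\notin\{0,\pi\}$), so the triple really is a complete invariant. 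Your case-by-case counts for deltoids, rhomboids and lozenges, and the collinearity read-off, are all correct.

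One point deserves to be made explicit rather than left implicit. You treat only the two ``pure'' sub-cases of a deltoid (both adjacent pairs equal, or both supplementary) and likewise for rhomboids. The paper's wording ``angles that are either both equal or both supplementary'' could in principle be read as allowing one pair equal and the other supplementary. But that mixed reading is untenable: for instance $\theta_1=\theta_2=\pi/3$, $\theta_3=\pi/4$, $\theta_4=3\pi/4$ admits \emph{no} sign choice summing to a multiple of~$2\pi$, so such a pyramid would have zero flat realizations, contradicting the Fact. Hence the Fact itself forces the uniform reading you use, and this is consistent with the ``coincide''/``antipodal'' subfamilies in Table~\ref{table:bonds}. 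Saying this in one sentence would close the only gap a careful reader might flag.
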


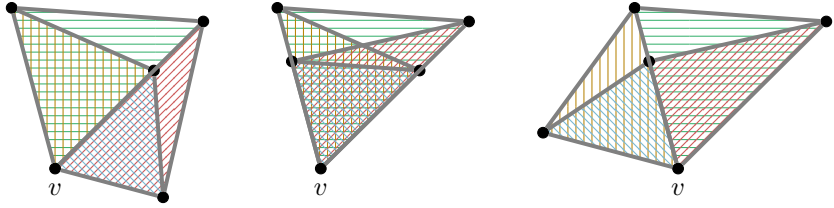
\begin{figure}[ht]
 \centering
  \begin{tikzpicture}[scale=1.3]
   \coordinate (A) at (0,0);
   \coordinate[rotate around={-30:(A)}] (B) at (-1.2,1.2);
   \coordinate[rotate around={30:(A)}] (C) at (0.8,-0.8);
   \coordinate (D) at (1.5,1.5);
   \coordinate (E) at (1,1);
   \fill[pattern=horizontal lines,pattern color=col6] (A)--(B)--(D)--cycle;
   \fill[pattern=vertical lines,pattern color=col1] (A)--(B)--(E)--cycle;
   \fill[pattern=north east lines,pattern color=col4] (A)--(C)--(D)--cycle;
   \fill[pattern=north west lines,pattern color=col5] (A)--(C)--(E)--cycle;
   \node[vertex, label={below:$v$}] (1) at (A) {};
   \node[vertex] (2) at (B) {};
   \node[vertex] (3) at (C) {};
   \node[vertex] (4) at (D) {};
   \node[vertex] (5) at (E) {};
   \draw[edge] (1)--(2);
   \draw[edge] (1)--(3);
   \draw[edge] (1)--(4);
   \draw[edge] (1)--(5);
   \draw[edge] (2)--(4);
   \draw[edge] (2)--(5);
   \draw[edge] (3)--(4);
   \draw[edge] (3)--(5);
  \end{tikzpicture}
  \qquad
  \begin{tikzpicture}[scale=1.3]
   \coordinate (A) at (0,0);
   \coordinate[rotate around={-30:(A)}] (B) at (-1.2,1.2);
   \coordinate[rotate around={-30:(A)}] (C) at (-0.8,0.8);
   \coordinate (D) at (1.5,1.5);
   \coordinate (E) at (1,1);
   \fill[pattern=horizontal lines,pattern color=col6] (A)--(B)--(D)--cycle;
   \fill[pattern=vertical lines,pattern color=col1] (A)--(B)--(E)--cycle;
   \fill[pattern=north east lines,pattern color=col4] (A)--(C)--(D)--cycle;
   \fill[pattern=north west lines,pattern color=col5] (A)--(C)--(E)--cycle;
   \node[vertex, label={below:$v$}] (1) at (A) {};
   \node[vertex] (2) at (B) {};
   \node[vertex] (3) at (C) {};
   \node[vertex] (4) at (D) {};
   \node[vertex] (5) at (E) {};
   \draw[edge] (1)--(2);
   \draw[edge] (1)--(3);
   \draw[edge] (1)--(4);
   \draw[edge] (1)--(5);
   \draw[edge] (2)--(4);
   \draw[edge] (2)--(5);
   \draw[edge] (3)--(4);
   \draw[edge] (3)--(5);
  \end{tikzpicture}
  \qquad
  \begin{tikzpicture}[scale=1.3]
   \coordinate (A) at (0,0);
   \coordinate[rotate around={-30:(A)}] (B) at (-1.2,1.2);
   \coordinate[rotate around={-30:(A)}] (C) at (-0.8,0.8);
   \coordinate (D) at (1.5,1.5);
   \coordinate[rotate around={-60:(A)}] (E) at (-1,-1);
   \fill[pattern=horizontal lines,pattern color=col6] (A)--(B)--(D)--cycle;
   \fill[pattern=vertical lines,pattern color=col1] (A)--(B)--(E)--cycle;
   \fill[pattern=north east lines,pattern color=col4] (A)--(C)--(D)--cycle;
   \fill[pattern=north west lines,pattern color=col5] (A)--(C)--(E)--cycle;
   \node[vertex, label={below:$v$}] (1) at (A) {};
   \node[vertex] (2) at (B) {};
   \node[vertex] (3) at (C) {};
   \node[vertex] (4) at (D) {};
   \node[vertex] (5) at (E) {};
   \draw[edge] (1)--(2);
   \draw[edge] (1)--(3);
   \draw[edge] (1)--(4);
   \draw[edge] (1)--(5);
   \draw[edge] (2)--(4);
   \draw[edge] (2)--(5);
   \draw[edge] (3)--(4);
   \draw[edge] (3)--(5);
  \end{tikzpicture}
 \caption{The three flat realizations of a lozenge.}
\end{figure}

\begin{fact}
\label{fact:flat_rhomboid}
 If a realization of a rhomboid has one dihedral angle between its triangular faces which is $0$ or~$\pi$, 
 then the realization is flat; 
 vice versa, if a realization of such a pyramid is flat, 
 then all dihedral angles are~$0$ or~$\pi$.
\end{fact}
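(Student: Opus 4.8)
The plan is to reduce everything to the configuration of unit directions pointing from the apex of the pyramid to its four neighbours. Fix the pyramid; by the symmetry of~$\GOct$ I may take it to be \quadrefA, the pyramid with apex~$1$, base the $4$-cycle $3$--$5$--$4$--$6$, and triangular faces $\{1,3,5\}$, $\{1,5,4\}$, $\{1,4,6\}$, $\{1,6,3\}$. Write $\alpha=\angle 3\,1\,5$, $\beta=\angle 5\,1\,4$, $\gamma=\angle 4\,1\,6$, $\delta=\angle 6\,1\,3$ for the four angles of this pyramid at~$1$: they depend only on the labeling, the pairs of opposite angles are $\{\alpha,\gamma\}$ and $\{\beta,\delta\}$, and being a rhomboid means exactly that $(\cos\gamma,\cos\delta)=\varepsilon\,(\cos\alpha,\cos\beta)$ for some sign $\varepsilon\in\{1,-1\}$ (with $\varepsilon=1$ when opposite angles are equal and $\varepsilon=-1$ when they are supplementary). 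Given a realization~$\rho$, I would place $\rho(1)$ at the origin and let $\mathbf{e}_u$ be the unit vector in the direction of $\rho(u)$ for $u\in\{3,4,5,6\}$, so that $\mathbf{e}_3\cdot\mathbf{e}_5=\cos\alpha$, $\mathbf{e}_4\cdot\mathbf{e}_5=\cos\beta$, $\mathbf{e}_4\cdot\mathbf{e}_6=\cos\gamma$ and $\mathbf{e}_3\cdot\mathbf{e}_6=\cos\delta$. Then the realization is flat exactly when $\mathbf{e}_3,\mathbf{e}_4,\mathbf{e}_5,\mathbf{e}_6$ all lie in one plane through the origin, while the dihedral angle along an edge $\{1,u\}$ equals $0$ or~$\pi$ exactly when the two triangular faces meeting along $\{1,u\}$ are coplanar, that is, when the three direction vectors they involve are linearly dependent.

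For the nontrivial implication I would argue as follows. The rhomboid condition and the pyramid \quadrefA are invariant under the cyclic relabeling of $3,5,4,6$ induced by an automorphism of~$\GOct$ fixing $1$ and~$2$, and this relabeling permutes the four edges at~$1$ cyclically; hence I may assume the dihedral that equals $0$ or~$\pi$ is the one along $\{1,5\}$, which means $\mathbf{e}_3,\mathbf{e}_4,\mathbf{e}_5$ are coplanar. After discarding the degenerate subcase where $\rho(1),\rho(3),\rho(4)$ are collinear (excluded for flexible labelings, in the spirit of Remark~\ref{remark:degenerate}), the vectors $\mathbf{e}_3,\mathbf{e}_4$ span a plane~$P$; since $\mathbf{e}_5$ is then a linear combination of $\mathbf{e}_3,\mathbf{e}_4$ it already lies in~$P$, so it only remains to show that $\mathbf{e}_6\in P$. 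Put $\nu=\mathbf{e}_3\cdot\mathbf{e}_4$. A triple of our unit vectors is linearly dependent exactly when the corresponding principal $3\times 3$ minor of their Gram matrix vanishes; the minor for $\{3,4,5\}$ equals
\[
1-\nu^2-\cos^2\alpha-\cos^2\beta+2\nu\cos\alpha\cos\beta ,
\]
which is~$0$ by hypothesis, and the minor for $\{3,4,6\}$ equals
\[
1-\nu^2-\cos^2\delta-\cos^2\gamma+2\nu\cos\gamma\cos\delta .
\]
Substituting $(\cos\gamma,\cos\delta)=\varepsilon\,(\cos\alpha,\cos\beta)$ and using $\varepsilon^2=1$ turns the second expression into the first, so it vanishes too; hence $\mathbf{e}_6\in\mathrm{span}(\mathbf{e}_3,\mathbf{e}_4)=P$. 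Thus all four direction vectors --- and therefore all five vertices of the pyramid --- lie in one plane: the realization is flat. The converse is immediate, since in a flat realization all triangular faces lie in the common plane, and therefore every dihedral angle between two of them is $0$ or~$\pi$.

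The only genuinely structural steps are the passage to the directions at the apex and the identification of ``dihedral angle $0$ or~$\pi$'' with the vanishing of a Gram minor; after that, the rhomboid hypothesis enters solely through the one-line identity $\cos^2\gamma+\cos^2\delta-2\nu\cos\gamma\cos\delta=\cos^2\alpha+\cos^2\beta-2\nu\cos\alpha\cos\beta$, and the two determinants are elementary to compute. I expect the point needing the most care --- and the natural home for an unstated non-degeneracy hypothesis --- to be the exclusion of degenerate configurations, such as a collinear triple among the five vertices, for which the argument above would break down; this should follow from the standing assumptions on flexible octahedra and their pyramids.
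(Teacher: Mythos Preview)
The paper states this as a ``Fact'' without proof (the introductory sentence of Section~\ref{elementary} says these results are ``known and elementary; we report them here mainly for self-containedness''), so there is nothing to compare your approach to. Your argument via Gram minors is clean and correct, and your reading of the rhomboid condition as a single sign~$\varepsilon$ governing both pairs is the right one: in the ``mixed'' case $\cos\gamma=\cos\alpha$, $\cos\delta=-\cos\beta$ the conclusion of the Fact actually fails (one can write down explicit non-flat realizations with the dihedral along~$\{1,5\}$ equal to~$0$), so such pyramids are not rhomboids in the paper's sense.

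The one point that deserves tightening is your dismissal of the degenerate subcase $\mathbf e_3\parallel\mathbf e_4$. Your appeal to Remark~\ref{remark:degenerate} is not quite on target: that remark concerns degenerate \emph{faces}, and $\{1,3,4\}$ is not a face of the octahedron. The cleaner argument is intrinsic to the pyramid: if $\mathbf e_3=\eta\,\mathbf e_4$ with $\eta\in\{\pm1\}$, then taking inner products with~$\mathbf e_5$ gives $\cos\alpha=\eta\cos\beta$, and together with the rhomboid relation $(\cos\gamma,\cos\delta)=\varepsilon(\cos\alpha,\cos\beta)$ this forces all four cosines to be~$\pm\cos\alpha$, so the pyramid is a lozenge rather than a rhomboid. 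Since the paper treats lozenges as a separate family (and handles them in Fact~\ref{fact:flat_lozenge}), the degenerate subcase simply does not occur for rhomboids, and your Gram-minor argument goes through without further hypotheses.
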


\begin{fact}
\label{fact:flat_lozenge}
 Consider a non-degenerate motion of a lozenge. 
 The flat realizations in such a motion are precisely the ones where all dihedral angles are~$0$ or~$\pi$.
\end{fact}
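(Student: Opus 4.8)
The plan is to translate everything into the coplanarity of the four triangular faces of the pyramid and to propagate that coplanarity around its cyclic structure. Write $v$ for the apex of the lozenge~\quadrefV\ and $a,b,c,d$ for its four neighbours, listed in the cyclic order in which they occur along the base $4$-cycle of~$\GOct$ (so the edges of that cycle are $\{a,b\}$, $\{b,c\}$, $\{c,d\}$, $\{d,a\}$). Then the triangular faces of~\quadrefV\ are exactly $vab$, $vbc$, $vcd$, $vda$; two consecutive ones share precisely one lateral edge — $\{v,b\}$, $\{v,c\}$, $\{v,d\}$, $\{v,a\}$ respectively — so there is one dihedral angle along each of these four edges, while the two pairs of opposite faces $\{vab,vcd\}$ and $\{vbc,vda\}$ meet only at~$v$ and contribute no further dihedral angle. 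By our standing conventions for pyramidal motions (Remark~\ref{remark:degenerate} together with the request built into Definition~\ref{definition:pyramid}), along a non-degenerate motion each of these four triangles is non-degenerate, hence spans a well-defined plane.

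With this set-up the two inclusions are short. If a realization in the motion is flat, then $v,a,b,c,d$ lie in a common plane~$\Pi$, every face lies in~$\Pi$, and therefore along each lateral edge the two half-planes cut out in the adjacent faces both lie in~$\Pi$, so every dihedral angle is~$0$ or~$\pi$. Conversely, assume all four dihedral angles equal~$0$ or~$\pi$, and let~$\Pi$ be the plane of the face~$vab$. Since $vbc$ shares the edge $\{v,b\}$ with $vab$ and the dihedral angle there is~$0$ or~$\pi$, the non-degenerate triangle $vbc$ must lie in~$\Pi$, hence $c\in\Pi$; applying the same step along $\{v,c\}$ to the face $vcd$ gives $d\in\Pi$. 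Thus all five vertices lie in~$\Pi$ and the realization is flat. Since a flat realization is compatible with only finitely many configurations of the given labeling, this establishes the claimed equality between the set of flat realizations occurring in the motion and the set of realizations in the motion all of whose dihedral angles are~$0$ or~$\pi$.

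The only delicate point — and the main thing to be careful about — is the implication ``a dihedral angle equal to~$0$ or~$\pi$ forces the two adjacent faces to be coplanar'': this would fail for a degenerate triangular face, which would not determine a plane and would break the propagation. Hence one must invoke the non-degeneracy of the faces (equivalently, that no edge of~\quadrefV\ has zero length and no two of its vertices coincide, cf.\ Remark~\ref{remark:degenerate}) before running the argument. Everything else is a direct unwinding of the definitions; in particular, the lozenge hypothesis enters only through these general non-degeneracy conventions. It is the contrast with Fact~\ref{fact:flat_rhomboid} — where a \emph{single} dihedral angle equal to~$0$ or~$\pi$ already forces flatness — that explains why the word ``all'' cannot be dropped from the statement for lozenges.
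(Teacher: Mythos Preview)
Your argument is correct. The paper actually states this as a \emph{Fact} without proof (it is listed among the ``known and elementary'' results collected for self-containedness), so there is no in-paper proof to compare against; your direct propagation argument is exactly the sort of elementary verification the authors had in mind.

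Two minor remarks. First, the sentence beginning ``Since a flat realization is compatible with only finitely many configurations\ldots'' is superfluous: you have already proved the biconditional \emph{flat $\Leftrightarrow$ all dihedral angles are $0$ or~$\pi$} for any realization with non-degenerate faces, and restricting both sides to realizations lying in the given motion is automatic. Second, your observation that the lozenge hypothesis plays no role beyond the standing non-degeneracy conventions is accurate and worth keeping; it clarifies that the content of Fact~\ref{fact:flat_lozenge}, as opposed to Fact~\ref{fact:flat_rhomboid}, lies in the necessity of the word ``all'' rather than in any special computation for lozenges.
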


\begin{remark}
 For deltoids and lozenges, there are non-flat realizations where one dihedral angle between its triangular faces is~$0$ or~$\pi$.
 Notice that these non-flat realizations appear, for example (though not only, in the case of deltoids), in \emph{degenerate} motions, 
 namely when two pairs of faces stay always coplanar during the motion. 
\end{remark}

\begin{definition}
A dihedral angle between two triangular faces of a flexible pyramid is \emph{simple} 
with respect to a motion of the pyramid if, once we fix a general value for that angle, 
there exists a unique (up to isometries) realization in that motion for which the angle has the given value.
\end{definition}

Notice that a lozenge has four simple dihedral angles, while a deltoid has two simple dihedral angles.\footnote{Recall that we always consider non-degenerate motions of pyramids.}

\begin{fact}
\label{fact:flat_deltoid}
 A deltoid is in a flat realization if and only if one of its simple dihedral angles between triangular faces are $0$ or~$\pi$.
\end{fact}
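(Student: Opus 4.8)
The plan is to transfer the whole problem to the unit sphere around the apex, analyse it there as a spherical four‑bar linkage, and then read the conclusion back off. First I would fix a realization in a non‑degenerate motion of the deltoid, place its apex $v$ at the origin, and let $u_1,\dots,u_4\in S^2$ be the unit vectors pointing towards the remaining four vertices, indexed cyclically so that the triangular faces are $v12$, $v23$, $v34$, $v41$. Coplanarity of the five vertices and all dihedral angles between triangular faces depend only on the $u_i$; and a realization is compatible with the labeling precisely when the four consecutive spherical distances $\beta_i:=\angle(u_i,u_{i+1})$ (indices modulo $4$) are held fixed, since $\ell_{\{i,i+1\}}^2=\ell_i^2+\ell_{i+1}^2-2\ell_i\ell_{i+1}\cos\beta_i$. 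The $\beta_i$ are exactly the angles of the pyramid at $v$, so after a cyclic relabeling the deltoid condition reads $\beta_1\sim\beta_2$ and $\beta_3\sim\beta_4$, where ``$\sim$'' means ``equal or supplementary''; the dihedral angle $\delta_i$ along $\{v,i\}$ is the interior angle at $u_i$ of the spherical quadrilateral $u_1u_2u_3u_4$.

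Next I would pin down the two simple dihedral angles guaranteed by the remark preceding the statement: they are $\delta_1$ and $\delta_3$. Indeed, normalising $u_1$ to the north pole and using the rotation about the polar axis, fixing $\delta_1$ pins $u_2$ (colatitude $\beta_1$) and $u_4$ (colatitude $\beta_4$); then $u_3$ lies on the two circles $C=\{w:\angle(w,u_2)=\beta_2\}$ and $C'=\{w:\angle(w,u_4)=\beta_3\}$, both of which already pass through $u_1$ because $\beta_1=\beta_2$ and $\beta_4=\beta_3$ (I treat the ``equal'' case; the ``supplementary'' ones reduce to it by replacing vertex $1$ and/or vertex $3$ by its reflection through $v$, which preserves coplanarity of the vertices and the two dihedral angles in question). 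Hence $u_3$ is the unique further intersection point of $C$ and $C'$ and the realization is determined — $\delta_1$ is simple, and symmetrically so is $\delta_3$. For $\delta_2$ and $\delta_4$ the analogous coincidence would force $\beta_2=\beta_3$, which fails in general; these are precisely the dihedral angles that can be $0$ or $\pi$ in a non‑flat realization, as in the remark.

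The forward implication is then immediate: if the realization is flat, all five vertices lie in one plane, so all four triangular faces lie in that plane and every dihedral angle between two of them — in particular $\delta_1$ and $\delta_3$ — equals $0$ or $\pi$. For the converse, suppose $\delta_1\in\{0,\pi\}$ (the case $\delta_3$ being symmetric). Then the faces $v41$ and $v12$ are coplanar, and since these triangles are non‑degenerate (Remark~\ref{remark:degenerate}), $u_1$ and $u_2$ span a plane $Q$ through $v$ that also contains $u_4$. Let $\sigma$ be the reflection of $S^2$ in $Q$. The circles $C,C'$ above have centres $u_2,u_4\in Q$, so they are $\sigma$‑invariant; they are distinct (else $u_2=\pm u_4$, excluded by non‑degeneracy); and they both contain $u_1\in Q$ because $\beta_1=\beta_2$ and $\beta_4=\beta_3$. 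Two distinct circles meet in at most two points, so $C\cap C'$ cannot contain a point off $Q$ together with its $\sigma$‑image; hence $C\cap C'\subseteq Q$, and therefore $u_3\in C\cap C'\subseteq Q$. So all five vertices lie in $Q$ and the realization is flat.

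The main obstacle I expect is the middle step: correctly identifying the two simple dihedral angles and dealing cleanly with the equal/supplementary dichotomy in the deltoid condition (and verifying that genuinely ``mixed'' configurations, if any arise at all, are ruled out by the non‑degeneracy assumption). By contrast, once this is in place, the sphere/circle argument for both implications is elementary, modulo dispatching the degenerate sub‑cases ($u_2=\pm u_4$; or $C$ tangent to $C'$, in which case $u_3=u_1\in Q$ already) via Remark~\ref{remark:degenerate} and non‑degeneracy of the motion.
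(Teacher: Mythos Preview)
The paper states Fact~\ref{fact:flat_deltoid} without proof; it is presented as one of several elementary facts about flexible pyramids in Section~\ref{elementary:pyramids}, so there is no argument in the paper to compare yours against directly.

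Your approach --- passing to the spherical four-bar at the apex and arguing via the two ``distance circles'' $C$, $C'$ --- is sound and is exactly the kind of elementary reasoning the paper has in mind. Your identification of the simple dihedral angles as $\delta_1$ and $\delta_3$ (the ones at the vertices where the two equal/supplementary pairs meet) is correct, and the reflection-through-$v$ trick to reduce the supplementary cases to the equal case is valid: it preserves the four face-planes through~$v$, hence all dihedral angles and coplanarity, while turning each $\beta_i$ into $\pi-\beta_i$ as needed. The forward implication is indeed immediate, and your reflection-in-$Q$ argument for the converse is clean.

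Two small points to tighten. First, in dismissing $C=C'$ you should argue that $u_2=\pm u_4$ at this particular (special) realization, not just generically, forces two faces to coincide as planes through~$v$ and hence is excluded by the non-degeneracy assumption on the motion --- but since the equality $u_2=\pm u_4$ would have to persist along the whole one-parameter motion (both $u_2$ and $u_4$ being determined up to a common rotation by $\delta_1$), this does follow. Second, when you say ``$u_3$ is the unique further intersection point'' in the simplicity argument, you should note that $u_3=u_1$ is excluded (it would make two faces coplanar for a general $\delta_1$). These are the degenerate sub-cases you already flag; just make sure the appeal to Remark~\ref{remark:degenerate} and the non-degeneracy of the motion is explicit in each.
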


\begin{proposition}
\label{proposition:flat_realizations}
 If a flexible octahedron has two neighbor rhomboids or lozenges among its $6$ pyramids, 
 then it admits $2$ flat realizations.
 Here, two pyramids~\quadrefV\ and~\quadrefW\ are neighbors
 if the vertices~$v$ and~$w$ are connected by an edge.
\end{proposition}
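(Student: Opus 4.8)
\emph{Proof strategy.}
The plan is to isolate the dihedral angle that the two neighbor pyramids have in common and to prove that the flat realizations of the octahedron are precisely the realizations in the motion at which this angle equals~$0$ or~$\pi$.

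Since the graph~$\GOct$ is edge-transitive we may assume $v = 1$ and $w = 3$. The two octahedral faces $\{1,3,5\}$ and $\{1,3,6\}$ are triangular faces of pyramid~$1$ (whose triangular faces are $\{1,3,5\},\{1,3,6\},\{1,4,5\},\{1,4,6\}$) and also of pyramid~$3$ (whose triangular faces are $\{1,3,5\},\{1,3,6\},\{2,3,5\},\{2,3,6\}$); let $\delta$ be the dihedral angle along their common edge~$\{1,3\}$, which is therefore a dihedral angle between triangular faces of pyramid~$1$ and of pyramid~$3$ at the same time. Fix a motion~$M$ of the flexible octahedron. Restricting the realizations of~$M$ to the vertex sets $\{1,3,4,5,6\}$ and $\{1,2,3,5,6\}$ yields motions $M_1$ and $M_3$ of the two pyramids; by Assumption~\ref{assumption} no two of their triangular faces are coplanar for a general realization, so $M_1$ and~$M_3$ are non-degenerate and Facts~\ref{fact:flat_rhomboid}--\ref{fact:flat_lozenge} apply to them. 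Note moreover that $\delta$ is non-constant on~$M$: a value of~$\delta$ determines the two rigid triangles $\{1,3,5\}$ and $\{1,3,6\}$, hence the positions of $1,3,5,6$ up to isometry, after which $2$ and~$4$ are algebraic over these data (each lies at three prescribed distances from already-placed points), so $\delta$ has finite fibers on the quotient of~$M$ by isometries, which is an irreducible curve.

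Next I claim that a realization $\rho \in M$ is flat if and only if $\delta \in \{0,\pi\}$; one implication is obvious. For the converse, suppose $\delta \in \{0,\pi\}$. Then $\rho|_{\{1,3,4,5,6\}}$ is flat: if pyramid~$1$ is a rhomboid this is exactly Fact~\ref{fact:flat_rhomboid}, and if it is a lozenge then, by Fact~\ref{fact:flat_lozenge} together with the remark that the non-flat lozenge realizations having a dihedral angle equal to $0$ or~$\pi$ occur only in degenerate motions, all dihedral angles of $\rho|_{\{1,3,4,5,6\}}$ are $0$ or~$\pi$ and the realization is flat. The same argument, with the same~$\delta$, shows $\rho|_{\{1,2,3,5,6\}}$ is flat. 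Hence $\rho(1),\rho(3),\rho(4),\rho(5),\rho(6)$ lie in a plane~$P_1$ and $\rho(1),\rho(2),\rho(3),\rho(5),\rho(6)$ lie in a plane~$P_3$; as the triangle $\{1,3,5\}$ is non-degenerate (Remark~\ref{remark:degenerate}) these planes share three non-collinear points, so $P_1 = P_3$ and $\rho$ is a flat realization of the whole octahedron.

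It remains to exhibit such a~$\rho$ and to count them. By the classification of flat pyramid realizations at the beginning of Section~\ref{elementary:pyramids}, a rhomboid has two flat realizations and a lozenge three, and a non-degenerate motion passes through exactly two of them; at those, $\delta \in \{0,\pi\}$. Since the projection $M \to M_1$ forgetting vertex~$2$ is dominant with finite fibers (by the argument above) and vertex~$2$ is confined to a finite set of positions over each configuration of the rest, those two flat realizations of pyramid~$1$ are restrictions of realizations in~$M$, which by the previous step are flat octahedra; conversely every flat realization of the octahedron in~$M$ restricts to a flat realization of pyramid~$1$ on~$M_1$, and a reflection-across-the-plane argument shows this restriction is injective up to congruence. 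Therefore~$M$ contains exactly two flat realizations up to congruence. I expect the main obstacle to be precisely this last step: one has to be sure that the octahedron's motion really attains the values $\delta \in \{0,\pi\}$---equivalently, that the induced pyramid motions genuinely pass through, rather than merely approach, the flat pyramid configurations---and that a lozenge does not contribute a spurious third flat realization; this is exactly where the sharp form of Facts~\ref{fact:flat_rhomboid}--\ref{fact:flat_lozenge} and the exact count of flat pyramid realizations, and not just the non-constancy of~$\delta$, are needed.
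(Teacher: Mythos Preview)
Your core argument---isolate the shared dihedral angle~$\delta$ along the edge~$\{1,3\}$ and use Facts~\ref{fact:flat_rhomboid} and~\ref{fact:flat_lozenge} to show that $\delta\in\{0,\pi\}$ forces both neighbor pyramids, hence the whole octahedron, to be flat---is exactly the paper's. The paper's proof is in fact only two sentences and establishes just the implication ``\quadrefA\ flat $\Rightarrow$ \quadrefC\ flat'' via the shared~$\delta$, leaving the existence and the count of two to the pyramid facts listed earlier; your additional care about non-constancy of~$\delta$, surjectivity of the projection $M\to M_1$, and the exact lozenge count addresses details the paper simply glosses over, and the obstacle you flag at the end is real but likewise not spelled out in the paper.
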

\begin{proof}
 Suppose that the two neighbor pyramids are~\quadrefA\ and~\quadrefC. 
 Suppose we are in a realization that is flat for~\quadrefA. 
 Then the dihedral angle between the planes~$135$ and~$136$ is~$0$ or~$\pi$. 
 Hence, by Facts~\ref{fact:flat_rhomboid} and~\ref{fact:flat_lozenge} this realization is also flat for~\quadrefC.
\end{proof}

\begin{remark}
 By Proposition~\ref{proposition:flat_realizations}, Type III octahedra admit two flat realizations.
\end{remark}

\section{Classification of flexible octahedra}
\label{classification}

In this section we provide the classification of flexible octahedra, reproving the known results by Bricard.
We do this by attaching combinatorial objects to flexible octahedra and prescribing rules for these objects.
Eventually, the rules determine constraints on edge lengths and angles, which can be grouped in four cases. 
By analyzing each of these cases, we classify flexible octahedra into the three families introduced by Bricard and described in the introduction. 

The justification for the rules is provided in Section~\ref{justifications}, 
and requires the algebro-geometric notion of \emph{moduli space of rational stable curves with marked points}, 
together with the theory developed by the authors in~\cite{Gallet2019} about flexible graphs on the sphere. 
Once the rules are established, however, the derivation of the classification is combinatorial in nature, 
and uses the elementary facts about octahedra reported in Section~\ref{elementary}. 
We believe that inserting this combinatorial ``extra-layer'' in the proof has two advantages: 
it helps highlighting the structure of the proof and separating logically independent units, 
and facilitates readers that may not be interested in the algebro-geometric technicalities to follow the proof of the classification.

\subsection{Objects}
\label{classification:objects}

We are going to introduce two combinatorial objects that will guide the classification, called \emph{octahedral} and \emph{pyramidal bonds}.
These are graphical representations of ``points at infinity'' of the space of realizations of an octahedron.
Key ingredients in our constructions are \emph{quadrilaterals} in~$\GOct$;
they are induced subgraphs isomorphic to the cycle~$C_4$ on four vertices.

\begin{notation}
	There exist exactly three quadrilaterals of~$\GOct$: they are those induced by the vertices $\{1,2,3,4\}$, $\{1,2,5,6\}$, and $\{3,4,5,6\}$. 
	Each quadrilateral is completely specified by the pair of vertices not appearing in it, which form a non-edge in~$\GOct$.
	Therefore we can label the quadrilaterals by~$12$, $34$, and~$56$.
\end{notation}

\minisec{Octahedral bonds}
Octahedral bonds are quadrilaterals of $\GOct$ together with a choice of orientation for all edges in the quadrilateral.
There are, therefore, $16$ octahedral bonds supported on a given quadrilateral in~$\GOct$.
Octahedral bonds come with some multiplicity, which we call the \emph{$\mu$-number}.

\minisec{Pyramidal bonds}
A pyramidal bond is a pyramid~\quadrefV, together with a direction on two edges incident to~$v$ 
that are not in the same triangle subgraph (see Figure~\ref{fig:pyramid_bond}).
There are hence $8$ pyramidal bonds supported on a given pyramid~\quadrefV\ in~$\GOct$.
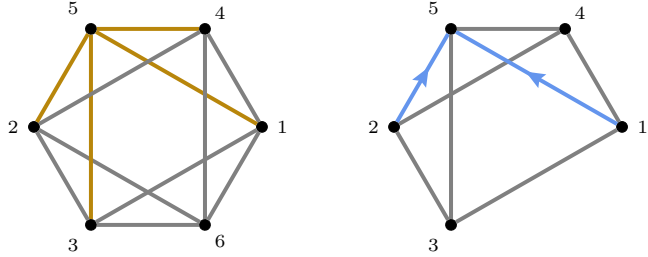
\begin{figure}[H]
	\centering
	\begin{tikzpicture}[scale=1.5]
		\coordinate (o) at (0,0);
		\node[vertex,label={[labelsty]right:$1$}] (1) at (1,0) {};
		\node[vertex,label={[labelsty,label distance=-2pt]right:$4$},rotate around={60:(o)}] (4) at (1) {};
		\node[vertex,label={[labelsty]right:$5$},rotate around={120:(o)}] (5) at (1) {};
		\node[vertex,label={[labelsty]left:$2$}] (2) at (-1,0) {};
		\node[vertex,label={[labelsty]left:$3$},rotate around={60:(o)}] (3) at (2) {};
		\node[vertex,label={[labelsty,label distance=-2pt]left:$6$},rotate around={120:(o)}] (6) at (2) {};
		\draw[edge] (1) -- (6);
		\draw[edge] (1) -- (3);
		\draw[edge] (4) -- (1);
		\draw[edge,col1] (5) -- (1);
		\draw[edge] (6) -- (3);
		\draw[edge] (4) -- (6);
		\draw[edge] (6) -- (2);
		\draw[edge,col1] (3) -- (5);
		\draw[edge] (3) -- (2);
		\draw[edge,col1] (5) -- (4);
		\draw[edge] (2) -- (4);
		\draw[edge,col1] (2) -- (5);
	\end{tikzpicture}
	\qquad
	\begin{tikzpicture}[scale=1.5]
		\coordinate (o) at (0,0);
		\node[vertex,label={[labelsty]right:$1$}] (1) at (1,0) {};
		\node[vertex,label={[labelsty,label distance=-2pt]right:$4$},rotate around={60:(o)}] (4) at (1) {};
		\node[vertex,label={[labelsty]left:$2$}] (2) at (-1,0) {};
		\node[vertex,label={[labelsty]left:$3$},rotate around={60:(o)}] (3) at (2) {};
		\node[vertex,label={[labelsty]right:$5$},rotate around={120:(o)}] (5) at (1) {};
		\draw[edge] (1) -- (3) (1) -- (4) (2) -- (3) (2) -- (4);
		\draw[dedge,colpb] (1)--(5);
		\draw[dedge,colpb] (2)--(5);
		\draw[edge] (3)edge(5) (4)edge(5);
	\end{tikzpicture}
 \caption{A pyramidal bond defined by vertex~$5$.}
 \label{fig:pyramid_bond}
\end{figure}
We fix a standard representation of a pyramid~\quadrefV\ by specifying which vertex is drawn where.
More precisely, we draw the pyramid as a square with the vertex~$v$ in the middle.
Then we take the clockwise neighbor of $v$ in the drawing of Figure~\ref{figure:Goct} to be on the bottom right corner of the square.
The other vertices are drawn accordingly to the clockwise order (see Figure~\ref{fig:pyramid_representation}).
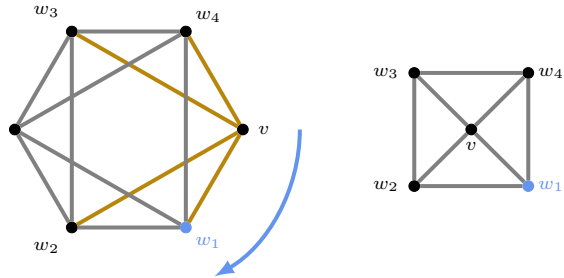
\begin{figure}[H]
	\centering
	\begin{tikzpicture}[scale=1.5,baseline=(o)]
		\coordinate (o) at (0,0);
		\node[vertex,label={[labelsty]right:$v$}] (1) at (1,0) {};
		\node[vertex,label={[labelsty,label distance=-2pt]right:$w_4$},rotate around={60:(o)}] (4) at (1) {};
		\node[vertex,label={[labelsty]right:$w_3$},rotate around={120:(o)}] (5) at (1) {};
		\node[vertex] (2) at (-1,0) {};
		\node[vertex,label={[labelsty]left:$w_2$},rotate around={60:(o)}] (3) at (2) {};
		\node[vertex,label={[labelsty,label distance=-2pt,colpbfixed]left:$w_1$},rotate around={120:(o)},colpbfixed] (6) at (2) {};
		\draw[edge,col1] (1) -- (6);
		\draw[edge,col1] (1) -- (3);
		\draw[edge,col1] (4) -- (1);
		\draw[edge,col1] (5) -- (1);
		\draw[edge] (6) -- (3);
		\draw[edge] (4) -- (6);
		\draw[edge] (6) -- (2);
		\draw[edge] (3) -- (5);
		\draw[edge] (3) -- (2);
		\draw[edge] (5) -- (4);
		\draw[edge] (2) -- (4);
		\draw[edge] (2) -- (5);
		\draw[edge,-latex,colpbfixed] ($(1)+(0.5,0)$) arc [radius=1.5,start angle=0,delta angle=-60];
	\end{tikzpicture}
	\qquad
	\begin{tikzpicture}[scale=1.5,baseline=(o)]
		\coordinate (o) at (0,0);
		\coordinate (1) at (1,0) {};
		\node[vertex,label={[labelsty,label distance=-2pt]right:$w_4$}] (w4) at (0.5,0.5) {};
		\coordinate (2) at (-1,0);
		\node[vertex,label={[labelsty]left:$w_2$}] (w2) at (-0.5,-0.5) {};
		\node[vertex,label={[labelsty]left:$w_3$}] (w3) at (-0.5,0.5) {};
		\node[vertex,label={[labelsty,label distance=-2pt,colpbfixed]right:$w_1$},colpbfixed] (w1) at (0.5,-0.5) {};
		\node[vertex,label={[labelsty,label distance=-1pt]below:$v$}] (v) at (o) {};
		\draw[edge] (w1) -- (w2) (w2) -- (w3) (w3) -- (w4) (w4) -- (w1);
		\draw[edge] (v)edge(w1) (v)edge(w2) (v)edge(w3) (v)edge(w4);
	\end{tikzpicture}
 \caption{A pyramid in standard representation.}
 \label{fig:pyramid_representation}
\end{figure}

The standard representation of pyramids provides a standard way to represent pyramidal bonds:
\begin{table}[H]
	\centering
  \begin{tabular}{*{8}{c}}
    \pbonddl & \pbonddc & \pbondar & \pbondac & \pbonddr & \pbonddo & \pbondal & \pbondao\\
    $\Pyr_{\Pbonddl}$ & $\Pyr_{\Pbonddc}$ & $\Pyr_{\Pbondar}$ & $\Pyr_{\Pbondac}$ & $\Pyr_{\Pbonddr}$ & $\Pyr_{\Pbonddo}$ & $\Pyr_{\Pbondal}$ & $\Pyr_{\Pbondao}$ \\
  \end{tabular}
\end{table}
If we want to specify that a bond is supported on a pyramid~\quadrefV, we put the symbol~$v$ as superscript, as for example in~$\Pyr_{\Pbonddl}^v$.
 
\subsection{Rules}
\label{classification:rules}

We introduce the rules that are satisfied by octahedral and pyramidal bonds;
their formal justification will be provided in Section~\ref{justifications}.
First of all, we fix a motion of an octahedron. 
This induces motions also on all the pyramids of the octahedron.
The motion of the octahedron carries a certain number of octahedral bonds,
and similarly the motions of the pyramids carry a certain number of pyramidal bonds.
``To carry'' a bond means that its $\mu$-number is positive for a given motion.
If the $\mu$-number of a bond is zero, we say that the motion ``does not have'' that bond.
Pyramidal and octahedral bonds associated to a motion must satisfy the following rules.

\begin{enumerate}\renewcommand{\theenumi}{\textbf{R\arabic{enumi}}}\renewcommand{\labelenumi}{\theenumi:}
  \item\label{rule:table}
		Depending on their bonds, flexible pyramids can be distinguished into five families: 
		general (\caseG), even deltoids (\caseE, with two subfamilies), odd deltoids 
		(\caseO, with two subfamilies), rhomboids (\caseR, with four subfamilies), and 
		lozenges (\caseL, with four subfamilies). The family/subfamily is completely 
		determined by the kind of bonds arising (with only one ambiguity: 
		rhomboids/lozenges), as specified by Table~\ref{table:bonds}.
		A deltoid~\quadrefV\ is even if the dihedral angles at its even edges are simple,
		where even edges are determined by Figure~\ref{figure:even_odd_pyramids}.
		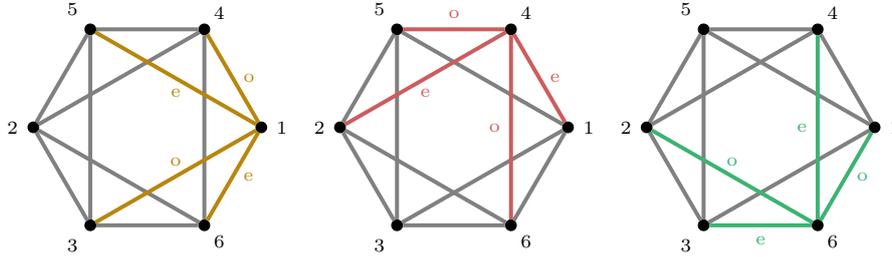
\begin{figure}[ht]
			\centering
			\begin{tikzpicture}[scale=1.5]
				\coordinate (o) at (0,0);
				\node[vertex,label={[labelsty]right:$1$}] (1) at (1,0) {};
				\node[vertex,label={[labelsty,label distance=-2pt]right:$4$},rotate around={60:(o)}] (4) at (1) {};
				\node[vertex,label={[labelsty]right:$5$},rotate around={120:(o)}] (5) at (1) {};
				\node[vertex,label={[labelsty]left:$2$}] (2) at (-1,0) {};
				\node[vertex,label={[labelsty]left:$3$},rotate around={60:(o)}] (3) at (2) {};
				\node[vertex,label={[labelsty,label distance=-2pt]left:$6$},rotate around={120:(o)}] (6) at (2) {};
				\draw[edge] (6) -- (3);
				\draw[edge] (4) -- (6);
				\draw[edge] (6) -- (2);
				\draw[edge] (3) -- (5);
				\draw[edge] (3) -- (2);
				\draw[edge] (5) -- (4);
				\draw[edge] (2) -- (4);
				\draw[edge] (2) -- (5);
				\draw[edge,col1] (1) to node[labelsty,right] {e} (6);
				\draw[edge,col1] (1) to node[labelsty,above] {o} (3);
				\draw[edge,col1] (4) to node[labelsty,right] {o} (1);
				\draw[edge,col1] (5) to node[labelsty,below] {e} (1); 
			\end{tikzpicture}
			\begin{tikzpicture}[scale=1.5]
				\coordinate (o) at (0,0);
				\node[vertex,label={[labelsty]right:$1$}] (1) at (1,0) {};
				\node[vertex,label={[labelsty,label distance=-2pt]right:$4$},rotate around={60:(o)}] (4) at (1) {};
				\node[vertex,label={[labelsty]right:$5$},rotate around={120:(o)}] (5) at (1) {};
				\node[vertex,label={[labelsty]left:$2$}] (2) at (-1,0) {};
				\node[vertex,label={[labelsty]left:$3$},rotate around={60:(o)}] (3) at (2) {};
				\node[vertex,label={[labelsty,label distance=-2pt]left:$6$},rotate around={120:(o)}] (6) at (2) {};
				\draw[edge] (1) -- (3);
				\draw[edge] (6) -- (1);
				\draw[edge] (5) -- (1);
				\draw[edge] (3) -- (5);
				\draw[edge] (3) -- (2);
				\draw[edge] (3) -- (6);
				\draw[edge] (2) -- (6);
				\draw[edge] (2) -- (5);
				\draw[edge,col4] (4) to node[labelsty,right] {e} (1);
				\draw[edge,col4] (4) to node[labelsty,below] {e} (2);
				\draw[edge,col4] (4) to node[labelsty,left] {o} (6);
				\draw[edge,col4] (4) to node[labelsty,above] {o} (5);
			\end{tikzpicture}
			\begin{tikzpicture}[scale=1.5]
				\coordinate (o) at (0,0);
				\node[vertex,label={[labelsty]right:$1$}] (1) at (1,0) {};
				\node[vertex,label={[labelsty,label distance=-2pt]right:$4$},rotate around={60:(o)}] (4) at (1) {};
				\node[vertex,label={[labelsty]right:$5$},rotate around={120:(o)}] (5) at (1) {};
				\node[vertex,label={[labelsty]left:$2$}] (2) at (-1,0) {};
				\node[vertex,label={[labelsty]left:$3$},rotate around={60:(o)}] (3) at (2) {};
				\node[vertex,label={[labelsty,label distance=-2pt]left:$6$},rotate around={120:(o)}] (6) at (2) {};
				\draw[edge] (1) -- (3);
				\draw[edge] (4) -- (1);
				\draw[edge] (5) -- (1);
				\draw[edge] (3) -- (5);
				\draw[edge] (3) -- (2);
				\draw[edge] (5) -- (4);
				\draw[edge] (2) -- (4);
				\draw[edge] (2) -- (5);
				\draw[edge,col6] (1) to node[labelsty,right] {o} (6);
				\draw[edge,col6] (6) to node[labelsty,below] {e} (3);
				\draw[edge,col6] (4) to node[labelsty,left] {e} (6);
				\draw[edge,col6] (6) to node[labelsty,above] {o} (2);
			\end{tikzpicture}
			\caption{Assignment for even and odd edges of pyramids. Only three pyramids are shown, since the assignment for the other three can be deduced as follows: $\{1, a\}$ is even/odd if and only if $\{2, a\}$ is so, and analogously for the other two pairs $(3,4)$ and $(5,6)$.}
			\label{figure:even_odd_pyramids}
		\end{figure}
		\begin{table}[ht]
			\centering
			\caption{Pyramidal bonds associated to the possible families of pyramids.
			A $1$ denotes that the bond is present, while $0$ denotes that it is not present.
			Moreover, pyramids are drawn in their standard representation.}
		  \begin{tabular}{cccccc}
				\toprule
				family & subfamily & \pbonddl & \pbonddc & \pbondar & \pbondac  \\
				                 & & \pbonddr & \pbonddo & \pbondal & \pbondao  \\
				\midrule
				\caseG & & 1 & 1 & 1 & 1 \\
				\midrule
				\multirow{2}{*}{\caseO} & \phantomas[l]{ antipodal}{\text{ coincide}} 
					& 1 & 1 
					& 1 & 0 \\
					& \text{ antipodal} & 1 & 1 & 0 & 1 \\
				\midrule
				\multirow{2}{*}{\caseE} & \phantomas[l]{ antipodal}{\text{ coincide}} 
				& 1 & 0 & 1 & 1 \\
				& \text{ antipodal} & 0 & 1 & 1 & 1 \\
				\midrule
				\multirow{4}{*}{\caseR} & Type 1 & 1 & 0 & 1 & 0 \\
				& Type 2 & 0 & 1 & 1 & 0 \\
				& Type 3 & 1 & 0 & 0 & 1 \\
				& Type 4 & 0 & 1 & 0 & 1 \\
				\midrule
				\multirow{4}{*}{\caseL} & Type 1 & 1 & 0 & 1 & 0 \\
				& Type 2 & 0 & 1 & 1 & 0 \\
				& Type 3 & 1 & 0 & 0 & 1 \\
				& Type 4 & 0 & 1 & 0 & 1 \\
				\bottomrule
			\end{tabular}
			\label{table:bonds}
		\end{table}
\end{enumerate}

The next rule describes the connection between octahedral bonds and edge lengths:
the presence of a bond for a motion determines linear relations between the edge lengths of a quadrilateral.

\begin{definition}
	\label{definition:directions}
	We choose the orientation of the edges of~$\GOct$ as in Figure~\ref{figure:directions}
	and denote this oriented graph by~$\GOctDir$. 
	Notice that this choice is equivariant under cyclic permutations of the vertices $(1,4,5,2,3,6)$.

	Given a labeling $\lambda \colon E_{\oct} \longrightarrow \R_{>0}$, 
	and given an oriented edge $(i,j)$ in~$\GOctDir$, 
	we define the number~$\ell_{ij}$ to be the length~$\lambda_{\{i, j\}}$.
	We define the number~$\ell_{ji}$ to be~$-\ell_{ij}$.
\end{definition}

	\begin{figure}[ht]
	\centering
		\begin{tikzpicture}[scale=1.5]
			\coordinate (o) at (0,0);
			\node[vertex,label={[labelsty]right:$1$}] (1) at (1,0) {};
			\node[vertex,label={[labelsty,label distance=-2pt]right:$4$},rotate around={60:(o)}] (4) at (1) {};
			\node[vertex,label={[labelsty]right:$5$},rotate around={120:(o)}] (5) at (1) {};
			\node[vertex,label={[labelsty]left:$2$}] (2) at (-1,0) {};
			\node[vertex,label={[labelsty]left:$3$},rotate around={60:(o)}] (3) at (2) {};
			
			\node[vertex,label={[labelsty,label distance=-2pt]left:$6$},rotate around={120:(o)}] (6) at (2) {};
			\draw[dedge] (1) -- (6);
			\draw[dedge] (1) -- (3);
			\draw[dedge] (4) -- (1);
			\draw[dedge] (5) -- (1);
			\draw[dedge] (6) -- (3);
			\draw[dedge] (4) -- (6);
			\draw[dedge] (6) -- (2);
			\draw[dedge] (3) -- (5);
			\draw[dedge] (3) -- (2);
			\draw[dedge] (5) -- (4);
			\draw[dedge] (2) -- (4);
			\draw[dedge] (2) -- (5);
		\end{tikzpicture}
		\caption{Fixed orientations in the graph~$\GOct$. We call this oriented graph~$\GOctDir$.}
		\label{figure:directions}
	\end{figure}

\begin{enumerate}\renewcommand{\theenumi}{\textbf{R\arabic{enumi}}}\renewcommand{\labelenumi}{\theenumi:}\setcounter{enumi}{1}
	\item\label{rule:lengths}
		For a motion having an octahedral bond with oriented edges $(t_1, s_1)$, $(t_2, s_2)$, $(t_3, s_3)$, $(t_4, s_4)$, 
		the following relation among the edge lengths of the octahedron holds:
		\begin{equation}
		\label{equation:condition_lengths}
			\ell_{t_1 \, s_1} + \ell_{t_2 \, s_2} + \ell_{t_3 \, s_3} + \ell_{t_4 \, s_4} = 0
			\, .
		\end{equation}
\end{enumerate}

From Rule~\ref{rule:lengths} we can already infer some properties of bonds of flexible octahedra.
We show that only some octahedral bonds may arise for a motion.

\begin{lemma}
\label{lemma:condition_bond}
	Consider an octahedral bond for a flexible octahedron, with an orientation $(t_1, s_1), \dotsc, (t_4, s_4)$.
	Equation~\eqref{equation:condition_lengths} from Rule~\ref{rule:lengths} has non-trivial solutions 
	only if exactly two of the oriented edges $(t_1, s_1), \dotsc, (t_4, s_4)$ coincide with the oriented edges induced by~$\GOctDir$ 
	(see Figure~\ref{figure:bond_orientation}).
\end{lemma}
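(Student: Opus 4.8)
The plan is to rewrite \eqref{equation:condition_lengths} as a sign--alternating sum of the four \emph{positive} edge lengths of the quadrilateral carrying the bond, and then to ask for which sign patterns such a sum can vanish when the lengths are those of a genuine flexible octahedron. Write $e_i=\{t_i,s_i\}$ for the four edges of the bond, and set $\epsilon_i=+1$ if the orientation $(t_i,s_i)$ agrees with the orientation of $e_i$ in $\GOctDir$ and $\epsilon_i=-1$ otherwise. By Definition~\ref{definition:directions} one has $\ell_{t_i s_i}=\epsilon_i\,\lambda_{e_i}$, so \eqref{equation:condition_lengths} becomes $\sum_{i=1}^{4}\epsilon_i\,\lambda_{e_i}=0$. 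The number of oriented edges of the bond coinciding with those of $\GOctDir$ is $a:=\#\{i:\epsilon_i=+1\}$, and the assertion to be proved is exactly that $a=2$.

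First I would reduce to a single quadrilateral. The orientations of $\GOctDir$ are equivariant under the cyclic permutation $(1,4,5,2,3,6)$, which is a graph automorphism of $\GOct$ and acts as a $3$--cycle on the three quadrilaterals; hence it suffices to treat the quadrilateral on the vertex set $\{1,2,3,4\}$, whose vertices in cyclic order are $1,3,2,4$. Reading off Figure~\ref{figure:directions}, $\GOctDir$ orients all four of its edges consistently around this cycle (hence so does it for the other two quadrilaterals, by the symmetry just mentioned). Consequently each $\epsilon_i$ merely records whether the bond agrees with or reverses this common cyclic orientation, and $a$ is a meaningful invariant.

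Now I would dispatch the cases. If $a=0$ or $a=4$, then $\sum_i\epsilon_i\lambda_{e_i}=\mp(\lambda_{e_1}+\lambda_{e_2}+\lambda_{e_3}+\lambda_{e_4})\neq 0$, since all four lengths are positive; so these do not occur. Replacing every $\epsilon_i$ by $-\epsilon_i$ leaves \eqref{equation:condition_lengths} unchanged and interchanges $a=1$ with $a=3$, so it is enough to rule out $a=1$. If $a=1$, say $\epsilon_1=+1$, the relation reads $\lambda_{e_1}=\lambda_{e_2}+\lambda_{e_3}+\lambda_{e_4}$. In any realization $\rho$ compatible with $\lambda$, the displacements of $\rho$ along the edges of the $4$--cycle sum to zero; applying the triangle inequality to $e_1$ against the other three edges yields $\lambda_{e_1}\le\lambda_{e_2}+\lambda_{e_3}+\lambda_{e_4}$, with equality only if the four vertices of the quadrilateral are collinear. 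The displayed relation is precisely this equality, so in every realization compatible with $\lambda$ --- in particular in a general realization of the motion --- the vertices $1,2,3,4$ lie on a common line $\ell$. Then the triangular faces $135$ and $235$ both contain $\ell$ (as $1,3\in\ell$ and $2,3\in\ell$, respectively) and both contain the vertex $5$, which does not lie on $\ell$ --- otherwise the triangle $135$ would degenerate, contradicting Remark~\ref{remark:degenerate}. Hence these two faces are coplanar, contradicting Assumption~\ref{assumption}. Therefore $a=1$ (and, by the sign flip, $a=3$) is impossible, so $a=2$.

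I expect the only step that is not pure sign bookkeeping to be the last one: passing from the equality case of the triangle inequality to the collinearity of the four vertices of the quadrilateral, and then to two coplanar faces of the octahedron, with the degenerate sub-case excluded via Remark~\ref{remark:degenerate}. Everything else --- the reduction to one quadrilateral, and the observation that $\GOctDir$ restricts to a directed cycle on each of the three quadrilaterals --- is immediate from Figure~\ref{figure:directions} together with the stated cyclic symmetry.
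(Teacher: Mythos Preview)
Your proof is correct and follows essentially the same approach as the paper: exclude $a\in\{0,4\}$ because a sum of positive lengths cannot vanish, and exclude $a\in\{1,3\}$ because the resulting equality $\lambda_{e_1}=\lambda_{e_2}+\lambda_{e_3}+\lambda_{e_4}$ forces the four vertices of the quadrilateral to be collinear, whence two faces become coplanar against Assumption~\ref{assumption}. You are simply more explicit than the paper in invoking the triangle inequality for the collinearity step and in naming the specific pair of coplanar faces (with Remark~\ref{remark:degenerate} ruling out the degenerate sub-case); the reduction to a single quadrilateral via the cyclic symmetry is a harmless extra that the paper handles implicitly.
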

\begin{figure}[ht]
	\centering
	\begin{tikzpicture}[scale=1.5]
		\coordinate (o) at (0,0);
		\node[vertex,label={[labelsty]right:$1$}] (1) at (1,0) {};
		\node[vertex,label={[labelsty,label distance=-2pt]right:$4$},rotate around={60:(o)}] (4) at (1) {};
		\node[vertex,label={[labelsty]right:$5$},rotate around={120:(o)}] (5) at (1) {};
		\node[vertex,label={[labelsty]left:$2$}] (2) at (-1,0) {};
		\node[vertex,label={[labelsty]left:$3$},rotate around={60:(o)}] (3) at (2) {};
		\node[vertex,label={[labelsty,label distance=-2pt]left:$6$},rotate around={120:(o)}] (6) at (2) {};
		\draw[dedge] (1) -- (6);
		\draw[dedge,col1] (1) -- (3);
		\draw[dedge,col1] (4) -- (1);
		\draw[dedge] (5) -- (1);
		\draw[dedge] (6) -- (3);
		\draw[dedge] (4) -- (6);
		\draw[dedge] (6) -- (2);
		\draw[dedge] (3) -- (5);
		\draw[dedge,col1] (3) -- (2);
		\draw[dedge] (5) -- (4);
		\draw[dedge,col1] (2) -- (4);
		\draw[dedge] (2) -- (5);
	\end{tikzpicture}
	\qquad
	\begin{tikzpicture}[scale=1.5]
		\coordinate (o) at (0,0);
		\node[vertex,label={[labelsty]right:$1$}] (1) at (1,0) {};
		\node[vertex,label={[labelsty,label distance=-2pt]right:$4$},rotate around={60:(o)}] (4) at (1) {};
		\node[vertex,label={[labelsty]left:$2$}] (2) at (-1,0) {};
		\node[vertex,label={[labelsty]left:$3$},rotate around={60:(o)}] (3) at (2) {};
		\draw[dedge,colds] (1) -- (3);
		\draw[dedge,coldo] (1) -- (4);
		\draw[dedge,coldo] (2) -- (3);
		\draw[dedge,colds] (2) -- (4);
	\end{tikzpicture}
 \caption{Orientations of the edges of a quadrilateral in $\GOctDir$ (left) and those induced by a bond of a flexible octahedron (right).
 Green edges \protect\tikz{\protect\draw[dedge,colds] (0,0) -- (0.75,0);} describe edges where the orientation coincides and
 red ones \protect\tikz{\protect\draw[dedge,coldo] (0,0) -- (0.75,0);} where they are opposite.}
 \label{figure:bond_orientation}
\end{figure}
\begin{proof}
 If all (or no) oriented edges in the quadrilateral coincide with the ones induced by~$\GOctDir$,
 then in Equation~\eqref{equation:condition_lengths} we have that the sum of four positive quantities is zero, a contradiction.
 If one (or three) oriented edges in the quadrilateral coincide with the ones induced by~$\GOctDir$,
 then we obtain a relation of the form $\ell_1 = \ell_2 + \ell_3 + \ell_4$, where all quantities~$\ell_k$ are positive.
 This implies that all the vertices of the quadrilateral are collinear in a general realization of the flexible octahedron;
 hence, some faces are coplanar, and we excluded this possibility.
 Then the only situation left is the one from the statement.
\end{proof}

A simple inspection provides the following result.

\begin{proposition}
\label{proposition:positive_divisors}
	Out of the $16$ possible octahedral bonds supported on a quadrilateral in~$\GOct$, only $6$ fulfill the condition of Lemma~\ref{lemma:condition_bond}.
	They come in three pairs, where two orientations are in the same pair if one can be obtained from the other by 
	reversing the orientations of all edges. 
	One of these pairs is constituted of orientations with the following property: if $(t_1, s_1), \dotsc, (t_4, s_4)$ are the oriented edges, then 
	\[
	\left|
	\bigcup \bigl\{ 
			\{t_k, s_k\} \, \colon \, 
			(t_k, s_k) \text{ is a directed edge of } \GOctDir, \;
			k \in \{1, \dotsc, 4\} 
		\bigr\}
	\right| = 4 \,.
	\]
	This means that those edges that are oriented as in $\GOctDir$ span the vertices of the quadrilateral. 
	These two special orientations are depicted as case~$\ObondA$ in Figure~\ref{figure:typesABC}.
\end{proposition}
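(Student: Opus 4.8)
The plan is to use Lemma~\ref{lemma:condition_bond} to reduce the statement to an elementary enumeration on a single $4$-cycle. By that lemma, an octahedral bond supported on a quadrilateral satisfies Equation~\eqref{equation:condition_lengths} nontrivially precisely when exactly two of its four oriented edges agree with the orientation induced by~$\GOctDir$. Since the quadrilateral has four edges, the number of such bonds is $\binom{4}{2}=6$, which gives the first assertion at once.

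For the pairing I would consider the operation~$r$ that reverses the orientations of all four edges of a bond. This is an involution on the set of $16$ bonds; it turns a bond having $k$ edges in agreement with~$\GOctDir$ into one having $4-k$ edges in agreement, so it preserves the $6$-element set singled out by Lemma~\ref{lemma:condition_bond}. Moreover~$r$ has no fixed point, since it flips every single edge. Hence~$r$ partitions the $6$ admissible bonds into exactly $3$ pairs of the form $\{B, r(B)\}$, which is the pairing in the statement.

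The core of the argument is the third claim, and here I would first record the structural fact that in~$\GOctDir$ the four edges of each quadrilateral form a \emph{directed} $4$-cycle. It suffices to verify this for the quadrilateral on $\{1,2,3,4\}$, whose edges $\{1,3\},\{3,2\},\{2,4\},\{4,1\}$ are oriented in~$\GOctDir$ as $1\to 3\to 2\to 4\to 1$, because the cyclic permutation $(1,4,5,2,3,6)$ permutes the three quadrilaterals and preserves~$\GOctDir$. Now fix such a quadrilateral and, for an admissible bond, let $S$ be the set of its two edges that agree with~$\GOctDir$. Two distinct edges of a $4$-cycle are either adjacent, in which case they share a vertex and $S$ covers only $3$ vertices of the quadrilateral, or opposite, in which case they are vertex-disjoint and $S$ covers all $4$. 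The two ``opposite'' choices of~$S$ are interchanged by~$r$, whereas each ``adjacent'' choice is sent by~$r$ to another adjacent choice; thus exactly one of the three pairs consists of bonds whose associated edges span all four vertices, namely the pair for which $\bigl|\bigcup S\bigr| = 4$. Comparing with the picture in Figure~\ref{figure:bond_orientation} identifies this special pair with case~$\ObondA$ of Figure~\ref{figure:typesABC}.

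I do not expect a serious obstacle: once Lemma~\ref{lemma:condition_bond} is available the whole proposition is a finite verification. The only points that deserve a little care are that the reversal~$r$ is genuinely fixed-point-free, so that we really obtain three pairs and not fewer, and that the four edges of every quadrilateral sit inside~$\GOctDir$ as a directed cycle — this last fact being exactly what makes the adjacent/opposite dichotomy, and hence the spanning criterion, meaningful.
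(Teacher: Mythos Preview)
Your proposal is correct and follows essentially the same approach as the paper, which simply states that ``a simple inspection provides the following result.'' Your argument is a careful unpacking of that inspection: the $\binom{4}{2}=6$ count, the fixed-point-free involution~$r$, and the observation that each quadrilateral sits in~$\GOctDir$ as a directed $4$-cycle (so the adjacent/opposite dichotomy singles out exactly one pair) are all exactly what the bare inspection amounts to.
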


\begin{notation}
	We use the following notation for the $6$ possible octahedral bonds on a given quadrilateral as described by Proposition~\ref{proposition:positive_divisors}.
	Let $12$, $34$, and~$56$ be the three quadrilaterals of~$\GOct$.
	The six possible bonds associated to the quadrilateral~$ij$ are denoted $\Bond^{ij}_{\ObondA}$, 
	$\Bond^{ij}_{\ObondAc}$, $\Bond^{ij}_{\ObondB}$, $\Bond^{ij}_{\ObondBc}$, $\Bond^{ij}_{\ObondC}$, 
	$\Bond^{ij}_{\ObondCc}$ according to the following criterion.
	As we mentioned in Definition~\ref{definition:directions},
	the orientation in~$\GOctDir$ is equivariant under cyclic permutations of the indices $(1,4,5,2,3,6)$.
	Hence, it is enough to define the notation only for the bonds associated to the 
	quadrilateral~$56$, and extend the notion to the others using cyclic permutations.
	We define $\Bond^{56}_{\ObondA}$, $\Bond^{56}_{\ObondB}$, and $\Bond^{56}_{\ObondC}$ as the 
	bonds inducing the orientations as in Figure~\ref{figure:typesABC}.
	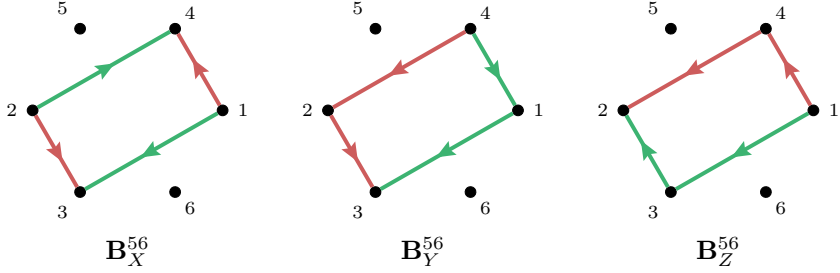
\begin{figure}[H]
		\centering
		\begin{tikzpicture}[scale=1.25]
				\coordinate (o) at (0,0);
				\node[vertex,label={[labelsty]right:$1$}] (1) at (1,0) {};
				\node[vertex,label={[labelsty,label distance=-2pt]right:$4$},rotate around={60:(o)}] (4) at (1) {};
				\node[vertex,label={[labelsty]right:$5$},rotate around={120:(o)}] (5) at (1) {};
				\node[vertex,label={[labelsty]left:$2$}] (2) at (-1,0) {};
				\node[vertex,label={[labelsty]left:$3$},rotate around={60:(o)}] (3) at (2) {};
				\node[vertex,label={[labelsty,label distance=-2pt]left:$6$},rotate around={120:(o)}] (6) at (2) {};
				\draw[dedge,colds] (1) -- (3);
				\draw[dedge,coldo] (1) -- (4);
				\draw[dedge,coldo] (2) -- (3);
				\draw[dedge,colds] (2) -- (4);
				\node[] at (0,-1.5) {$\Bond^{56}_{\ObondA}$};
			\end{tikzpicture}
			\quad
			\begin{tikzpicture}[scale=1.25]
				\coordinate (o) at (0,0);
				\node[vertex,label={[labelsty]right:$1$}] (1) at (1,0) {};
				\node[vertex,label={[labelsty,label distance=-2pt]right:$4$},rotate around={60:(o)}] (4) at (1) {};
				\node[vertex,label={[labelsty]right:$5$},rotate around={120:(o)}] (5) at (1) {};
				\node[vertex,label={[labelsty]left:$2$}] (2) at (-1,0) {};
				\node[vertex,label={[labelsty]left:$3$},rotate around={60:(o)}] (3) at (2) {};
				\node[vertex,label={[labelsty,label distance=-2pt]left:$6$},rotate around={120:(o)}] (6) at (2) {};
				\draw[dedge,colds] (1) -- (3);
				\draw[dedge,colds] (4) -- (1);
				\draw[dedge,coldo] (2) -- (3);
				\draw[dedge,coldo] (4) -- (2);
				\node[] at (0,-1.5) {$\Bond^{56}_{\ObondB}$};
			\end{tikzpicture}
			\quad
			\begin{tikzpicture}[scale=1.25]
				\coordinate (o) at (0,0);
				\node[vertex,label={[labelsty]right:$1$}] (1) at (1,0) {};
				\node[vertex,label={[labelsty,label distance=-2pt]right:$4$},rotate around={60:(o)}] (4) at (1) {};
				\node[vertex,label={[labelsty]right:$5$},rotate around={120:(o)}] (5) at (1) {};
				\node[vertex,label={[labelsty]left:$2$}] (2) at (-1,0) {};
				\node[vertex,label={[labelsty]left:$3$},rotate around={60:(o)}] (3) at (2) {};
				\node[vertex,label={[labelsty,label distance=-2pt]left:$6$},rotate around={120:(o)}] (6) at (2) {};
				\draw[dedge,colds] (1) -- (3);
				\draw[dedge,coldo] (1) -- (4);
				\draw[dedge,colds] (3) -- (2);
				\draw[dedge,coldo] (4) -- (2);
				\node[] at (0,-1.5) {$\Bond^{56}_{\ObondC}$};
			\end{tikzpicture}
		\caption{Three of the six possible octahedral bonds on the quadrilateral~$56$.}
		\label{figure:typesABC}
	\end{figure}
	The bonds $\Bond^{56}_{\ObondAc}$, $\Bond^{56}_{\ObondBc}$, and $\Bond^{56}_{\ObondCc}$ 
	are defined to be the bonds inducing the reversed orientations with respect 
	to the three previous ones. The two bonds~$\Bond^{56}_{\ObondA}$ 
	and~$\Bond^{56}_{\ObondAc}$ have the special property mentioned in 
	Proposition~\ref{proposition:positive_divisors}. 
	By applying cyclic permutations to the previous $6$ orientations, we obtain $36$ quadrilaterals with oriented edges. 
	The notation symbols for these bonds are obtained by applying cyclic permutations to the indices appearing in the symbols for the bonds $\Bond^{56}_{\bullet}$, 
	where $\bullet \in \{ \ObondA, \ObondB, \ObondC, \ObondAc, \ObondBc, \ObondCc \}$, and then by applying the following rules:
	\[
	\Bond^{ij}_{\ObondA} = \Bond^{ji}_{\ObondAc}, \quad 
	\Bond^{ij}_{\ObondB} = \Bond^{ji}_{\ObondB}, \quad
	\Bond^{ij}_{\ObondBc} = \Bond^{ji}_{\ObondBc}, \quad
	\Bond^{ij}_{\ObondC} = \Bond^{ji}_{\ObondC}, \quad 
	\Bond^{ij}_{\ObondCc} = \Bond^{ji}_{\ObondCc}.
	\]
\end{notation}

\begin{notation}
	We denote the $\mu$-number of the octahedral bond~$\Bond^{ij}_{\ObondA}$ by~$\mu_{\ObondA}^{ij}$, and similarly for the other bonds. 
\end{notation}
\begin{notation}
	If a pyramid~\quadrefV\ has a pyramidal bond $\Pyr_{\Pbondac}$, we write $\mu_{\Pbondac}^v = 1$;
	otherwise we write $\mu_{\Pbondac}^v = 0$.
	Similarly we define $\mu$-numbers for the other pyramidal bonds.
\end{notation}

The next rule explains what happens when we reverse directed edges in a bond.

\begin{enumerate}\renewcommand{\theenumi}{\textbf{R\arabic{enumi}}}\renewcommand{\labelenumi}{\theenumi:}\setcounter{enumi}{2}
	\item\label{rule:conjugates}
		The $\mu$-number of a pyramidal or octahedral bond coincides with the $\mu$-number of the bond obtained by reversing the directed edges.
		In particular, if a motion carries a bond then it also carries the corresponding bond with reversed directions.
\end{enumerate}

To start the classification, we need one last rule, linking $\mu$-numbers of octahedral bonds to $\mu$-numbers of pyramidal bonds. 
This rule, however, works only under an assumption on the pyramids of the octahedron, called simplicity. 

\begin{definition}
 Consider a flexible octahedron.
 We say that a pyramid is \emph{simple} if, 
 given a general realization of the pyramid for the induced motion, 
 there is exactly one non-degenerate realization of the octahedron that extends the one of the pyramid.
\end{definition}

We can now state the last rule and then we start the classification in the case of simple pyramids.
Afterwards, we deal with the situation of non-simple pyramids.

\begin{enumerate}\renewcommand{\theenumi}{\textbf{R\arabic{enumi}}}\renewcommand{\labelenumi}{\theenumi:}\setcounter{enumi}{3}
  \item\label{rule:equations} Given a motion of a flexible octahedron, suppose that all pyramids are simple.
		Then we have relations between the $\mu$-numbers of the possible pyramidal bonds supported on~\quadrefV\ 
		and the $\mu$-numbers of the $18$ possible octahedral bonds given by the following graphical rule (see Figure~\ref{figure:eq_derivation}). 
		We consider a possible pyramidal bond, for example $\Pyr_{\Pbondar}$ on \quadrefA. 
		We draw the orientation of the two edges specified by $\Pyr_{\Pbondar}$ on the representation of~$\GOct$ as in Figure~\ref{figure:Goct}. 
		The $\mu$-number of~$\Pyr_{\Pbondar}$ is then equal to the sum of the $\mu$-numbers of the octahedral bonds that ``extend'' the two oriented edges of~$\Pyr_{\Pbondar}$; 
		in this case, we have a unique way to extend them, namely by~$\Bond_{\ObondBc}^{56}$. 
		Hence, we get the relation $\mu_{\Pbondar}^1 = \mu_{\ObondBc}^{56}$. 
		If we start, instead, from~$\Pyr_{\Pbondao}$ again on pyramid~\quadrefA, we have two ways to extend it, namely by $\Bond_{\ObondA}^{56}$, and $\Bond_{\ObondC}^{56}$. 
		Therefore, the relation is $\mu_{\Pbondao}^1 = \mu_{\ObondA}^{56} + \mu_{\ObondC}^{56}$. 
		\begin{figure}[ht]
		  \centering
		    \begin{tikzpicture}
		      \begin{scope}
		        \node[] at (0,0) {$\Pyr_{\Pbondar}$};
		      \end{scope}
		      \draw[ultra thick,->] (0.4,0) -- (0.9,0);
		      \begin{scope}[xshift=1.5cm]
		        \node[] at (0,0) {\pbondar};
		      \end{scope}
		      \draw[ultra thick,->] (2.1,0) -- (2.6,0);
		      \begin{scope}[xshift=3.95cm,scale=0.8]
		        \coordinate (o) at (0,0);
						\node[vertex,label={[labelsty]right:$1$}] (1) at (1,0) {};
						\node[vertex,label={[labelsty,label distance=-2pt]right:$4$},rotate around={60:(o)}] (4) at (1) {};
						\node[vertex,label={[labelsty]right:$5$},rotate around={120:(o)}] (5) at (1) {};
						\node[vertex,label={[labelsty]left:$2$}] (2) at (-1,0) {};
						\node[vertex,label={[labelsty]left:$3$},rotate around={60:(o)}] (3) at (2) {};
						\node[vertex,label={[labelsty,label distance=-2pt]left:$6$},rotate around={120:(o)}] (6) at (2) {};
						\draw[edge] (1) -- (6);
						\draw[dedge,colpb] (3) -- (1);
						\draw[dedge,colpb] (1) -- (4);
						\draw[edge] (5) -- (1);
						\draw[edge,col1] (6) -- (3);
						\draw[edge,col1] (4) -- (6);
						\draw[edge] (6) -- (2);
						\draw[edge,col1] (3) -- (5);
						\draw[edge] (3) -- (2);
						\draw[edge,col1] (5) -- (4);
						\draw[edge] (2) -- (4);
						\draw[edge] (2) -- (5);
		      \end{scope}
		      \draw[ultra thick,->] (5.25,0) -- (5.75,0);
		      \begin{scope}[xshift=7.15cm,scale=0.8]
		        \coordinate (o) at (0,0);
						\node[vertex,label={[labelsty]right:$1$}] (1) at (1,0) {};
						\node[vertex,label={[labelsty,label distance=-2pt]right:$4$},rotate around={60:(o)}] (4) at (1) {};
						\node[vertex,label={[labelsty]right:$5$},rotate around={120:(o)}] (5) at (1) {};
						\node[vertex,label={[labelsty]left:$2$}] (2) at (-1,0) {};
						\node[vertex,label={[labelsty]left:$3$},rotate around={60:(o)}] (3) at (2) {};
						\node[vertex,label={[labelsty,label distance=-2pt]left:$6$},rotate around={120:(o)}] (6) at (2) {};
						\draw[edge] (1) -- (6);
						\draw[dedge,coldo] (3) -- (1);
						\draw[dedge,coldo] (1) -- (4);
						\draw[edge] (5) -- (1);
						\draw[edge] (6) -- (3);
						\draw[edge] (4) -- (6);
						\draw[edge] (6) -- (2);
						\draw[edge] (3) -- (5);
						\draw[dedge,colds] (3) -- (2);
						\draw[edge] (5) -- (4);
						\draw[dedge,colds] (2) -- (4);
						\draw[edge] (2) -- (5);
		      \end{scope}
		      \draw[ultra thick,->] (8.45,0) -- (8.95,0);
		      \begin{scope}[xshift=8.95cm]
		        \node[anchor=west] at (0,0) {$\Bond_{\ObondBc}^{56}$};
		      \end{scope}
		    \end{tikzpicture}
		    
		    \begin{tikzpicture}
		      \begin{scope}
		        \node[] at (0,0) {$\Pyr_{\Pbondao}$};
		      \end{scope}
		      \draw[ultra thick,->] (0.4,0) -- (0.9,0);
		      \begin{scope}[xshift=1.5cm]
		        \node[] at (0,0) {\pbondao};
		      \end{scope}
		      \draw[ultra thick,->] (2.1,0) -- (2.6,0);
		      \begin{scope}[xshift=3.95cm,scale=0.8]
		        \coordinate (o) at (0,0);
						\node[vertex,label={[labelsty]right:$1$}] (1) at (1,0) {};
						\node[vertex,label={[labelsty,label distance=-2pt]right:$4$},rotate around={60:(o)}] (4) at (1) {};
						\node[vertex,label={[labelsty]right:$5$},rotate around={120:(o)}] (5) at (1) {};
						\node[vertex,label={[labelsty]left:$2$}] (2) at (-1,0) {};
						\node[vertex,label={[labelsty]left:$3$},rotate around={60:(o)}] (3) at (2) {};
						\node[vertex,label={[labelsty,label distance=-2pt]left:$6$},rotate around={120:(o)}] (6) at (2) {};
						\draw[edge] (1) -- (6);
						\draw[dedge,colpb] (1) -- (3);
						\draw[dedge,colpb] (1) -- (4);
						\draw[edge] (5) -- (1);
						\draw[edge,col1] (6) -- (3);
						\draw[edge,col1] (4) -- (6);
						\draw[edge] (6) -- (2);
						\draw[edge,col1] (3) -- (5);
						\draw[edge] (3) -- (2);
						\draw[edge,col1] (5) -- (4);
						\draw[edge] (2) -- (4);
						\draw[edge] (2) -- (5);
		      \end{scope}
		      \draw[ultra thick,->] (5.25,0) -- (5.75,0);
		      \draw[black] (6.2,-2.3) -- (5.9,-2.3) -- (5.9,2.3) -- (6.2,2.3);
		      \begin{scope}[xshift=7.15cm,yshift=1.2cm,scale=0.8]
		        \coordinate (o) at (0,0);
						\node[vertex,label={[labelsty]right:$1$}] (1) at (1,0) {};
						\node[vertex,label={[labelsty,label distance=-2pt]right:$4$},rotate around={60:(o)}] (4) at (1) {};
						\node[vertex,label={[labelsty]right:$5$},rotate around={120:(o)}] (5) at (1) {};
						\node[vertex,label={[labelsty]left:$2$}] (2) at (-1,0) {};
						\node[vertex,label={[labelsty]left:$3$},rotate around={60:(o)}] (3) at (2) {};
						\node[vertex,label={[labelsty,label distance=-2pt]left:$6$},rotate around={120:(o)}] (6) at (2) {};
						\draw[edge] (1) -- (6);
						\draw[dedge,colds] (1) -- (3);
						\draw[dedge,coldo] (1) -- (4);
						\draw[edge] (5) -- (1);
						\draw[edge] (6) -- (3);
						\draw[edge] (4) -- (6);
						\draw[edge] (6) -- (2);
						\draw[edge] (3) -- (5);
						\draw[dedge,coldo] (2) -- (3);
						\draw[edge] (5) -- (4);
						\draw[dedge,colds] (2) -- (4);
						\draw[edge] (2) -- (5);
		      \end{scope}
		      \begin{scope}[xshift=7.15cm,yshift=-1.2cm,scale=0.8]
		        \coordinate (o) at (0,0);
						\node[vertex,label={[labelsty]right:$1$}] (1) at (1,0) {};
						\node[vertex,label={[labelsty,label distance=-2pt]right:$4$},rotate around={60:(o)}] (4) at (1) {};
						\node[vertex,label={[labelsty]right:$5$},rotate around={120:(o)}] (5) at (1) {};
						\node[vertex,label={[labelsty]left:$2$}] (2) at (-1,0) {};
						\node[vertex,label={[labelsty]left:$3$},rotate around={60:(o)}] (3) at (2) {};
						\node[vertex,label={[labelsty,label distance=-2pt]left:$6$},rotate around={120:(o)}] (6) at (2) {};
						\draw[edge] (1) -- (6);
						\draw[dedge,colds] (1) -- (3);
						\draw[dedge,coldo] (1) -- (4);
						\draw[edge] (5) -- (1);
						\draw[edge] (6) -- (3);
						\draw[edge] (4) -- (6);
						\draw[edge] (6) -- (2);
						\draw[edge] (3) -- (5);
						\draw[dedge,colds] (3) -- (2);
						\draw[edge] (5) -- (4);
						\draw[dedge,coldo] (4) -- (2);
						\draw[edge] (2) -- (5);
		      \end{scope}
		      \draw[ultra thick,->] (8.45,1.2) -- (8.95,1.2);
		      \draw[ultra thick,->] (8.45,-1.2) -- (8.95,-1.2);
		      \begin{scope}[xshift=8.95cm]
		        \node[anchor=west] at (0,1.2) {$B_{\ObondA}^{56}$};
		        \node[anchor=west] at (0,-1.2) {$B_{\ObondC}^{56}$};
		      \end{scope}
		    \end{tikzpicture}
		  \caption{Graphical derivation of the linear relations among $\mu$-numbers of pyramidal and octahedral bonds.}
		  \label{figure:eq_derivation}
		\end{figure}
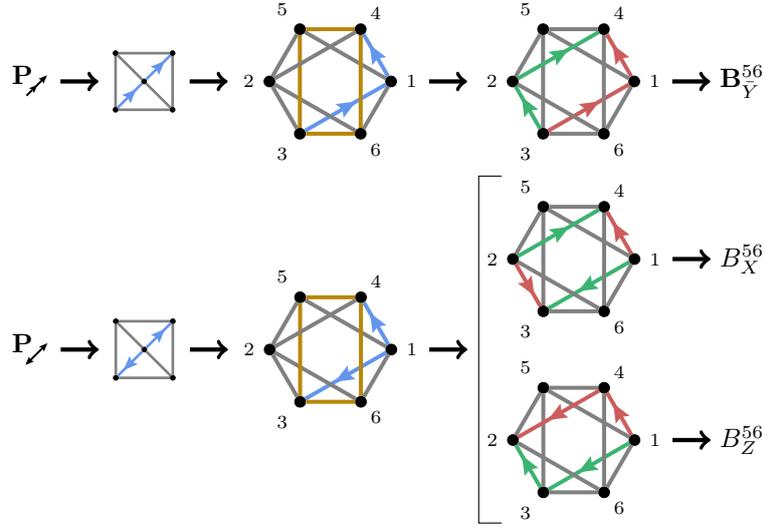
		
		By applying the graphical procedure to all pyramidal bonds, and taking into account the equalities from Rule~\ref{rule:conjugates}, 
		we obtain the following linear system:
		\begin{equation}
		\label{equation:mu}
		\begin{aligned}
			\mu_{\Pbonddl}^{1} &= \mu_{\ObondC}^{34}                & \mu_{\Pbonddl}^{3} &= \mu_{\ObondC}^{56}                & \mu_{\Pbonddl}^{5} &= \mu_{\ObondC}^{12}                \\
			\mu_{\Pbonddo}^{1} &= \mu_{\ObondA}^{34} + \mu_{\ObondB}^{34} & \mu_{\Pbonddo}^{3} &= \mu_{\ObondA}^{56} + \mu_{\ObondB}^{56} & \mu_{\Pbonddo}^{5} &= \mu_{\ObondA}^{12} + \mu_{\ObondB}^{12} \\
			\mu_{\Pbondal}^{1} &= \mu_{\ObondB}^{56}                & \mu_{\Pbondal}^{3} &= \mu_{\ObondB}^{12}                & \mu_{\Pbondal}^{5} &= \mu_{\ObondB}^{34}                \\
			\mu_{\Pbondao}^{1} &= \mu_{\ObondA}^{56} + \mu_{\ObondC}^{56} & \mu_{\Pbondao}^{3} &= \mu_{\ObondA}^{12} + \mu_{\ObondC}^{12} & \mu_{\Pbondao}^{5} &= \mu_{\ObondA}^{34} + \mu_{\ObondC}^{34} \\
			\mu^1_{\bullet} &= \mu^{2}_{\bullet} & \mu^3_{\bullet} &= \mu^{4}_{\bullet} & \mu^5_{\bullet} &= \mu^6_{\bullet}
		\end{aligned}
		\end{equation}
		where $\bullet$ is any of the symbols $\{ \Pbonddl, \Pbonddo, \,\,\Pbondal, \,\,\Pbondao \}$.
\end{enumerate}

With the rules at hand, we are ready to attack the classification.

\subsection{Classification}
\label{classification:classification}
From now on, we suppose that the hypothesis in Rule~\ref{rule:equations} holds, 
namely that we are given a motion of an octahedron
and that all pyramids are simple.
At the end of the section we analyze the cases when 
some pyramids are not simple.
We distinguish four cases, parametrized by the sums of the $\mu$-numbers of octahedral bonds.

\begin{definition}
 For each quadrilateral~$ij$ in~$\GOct$, we define~$\mu^{ij}$ to be the quantity:
 \[
  \mu^{ij} := \mu_{\ObondA}^{ij} + \mu_{\ObondB}^{ij} + \mu_{\ObondC}^{ij} +\mu_{\ObondAc}^{ij} + \mu_{\ObondBc}^{ij} + \mu_{\ObondCc}^{ij} \stackrel{\ref{rule:conjugates}}{=} 
  2 (\mu_{\ObondA}^{ij} + \mu_{\ObondB}^{ij} + \mu_{\ObondC}^{ij}) \,.
 \]
\end{definition}

\begin{lemma}
\label{lemma:four_cases}
 There are only $4$ possibilities (up to swapping quadrilaterals) for the numbers $(\mu^{12}, \mu^{34}, \mu^{56})$:
\[
 (\mu^{12}, \mu^{34}, \mu^{56}) \in 
 \bigl\{  
  (4,4,4),
  (4,4,2),
  (4,2,2),
  (2,2,2)
 \bigr\} \,.
\]
\end{lemma}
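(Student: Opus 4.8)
The plan is to show that each of the three numbers $\mu^{12},\mu^{34},\mu^{56}$ lies in~$\{2,4\}$. Since $\mu^{ij}=2\bigl(\mu_{\ObondA}^{ij}+\mu_{\ObondB}^{ij}+\mu_{\ObondC}^{ij}\bigr)$ is automatically a non-negative even integer, it suffices to establish the two-sided bound $1\le\mu_{\ObondA}^{ij}+\mu_{\ObondB}^{ij}+\mu_{\ObondC}^{ij}\le 2$ for every quadrilateral~$ij$. Once this is done, the unordered triples $(\mu^{12},\mu^{34},\mu^{56})$ whose entries all lie in~$\{2,4\}$ are precisely the four listed in the statement, which is exactly the assertion of the lemma.

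Both inequalities will be extracted from the linear system~\eqref{equation:mu} of Rule~\ref{rule:equations}, which is available because we are assuming all pyramids simple. Adding, in each column of~\eqref{equation:mu}, the third and fourth rows, one reads off that $\mu_{\ObondA}^{ij}+\mu_{\ObondB}^{ij}+\mu_{\ObondC}^{ij}=\mu_{\Pbondal}^{v}+\mu_{\Pbondao}^{v}$, where $v=1$ for $ij=56$, $v=3$ for $ij=12$, and $v=5$ for $ij=34$; symmetrically, adding the first two rows gives $\mu_{\ObondA}^{ij}+\mu_{\ObondB}^{ij}+\mu_{\ObondC}^{ij}=\mu_{\Pbonddl}^{w}+\mu_{\Pbonddo}^{w}$ for a (different) pyramid~$w$. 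Thus in every case the sum in question equals the combined $\mu$-number of two pyramidal bonds supported on a single pyramid.

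For the upper bound it then suffices to recall that each pyramidal $\mu$-number equals~$0$ or~$1$ by definition, so $\mu_{\Pbondal}^{v}+\mu_{\Pbondao}^{v}\le 2$, whence $\mu^{ij}\le 4$. For the lower bound the key input is Rule~\ref{rule:table}: scanning Table~\ref{table:bonds}, every flexible pyramid carries at least one of the two bonds $\Pyr_{\Pbondar}$, $\Pyr_{\Pbondac}$ -- equivalently, by Rule~\ref{rule:conjugates}, at least one of $\Pyr_{\Pbondal}$, $\Pyr_{\Pbondao}$ (and likewise at least one of $\Pyr_{\Pbonddl}$, $\Pyr_{\Pbonddc}$). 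Hence $\mu_{\Pbondal}^{v}+\mu_{\Pbondao}^{v}\ge 1$ for every flexible pyramid~$v$, and applying this to the pyramids $1$, $3$, $5$ yields $\mu^{56},\mu^{12},\mu^{34}\ge 2$.

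I do not expect a genuine obstacle here: the statement is a short deduction from Rules~\ref{rule:table}, \ref{rule:conjugates} and~\ref{rule:equations} together with the fact that pyramidal $\mu$-numbers are Boolean. The only point that needs care is the bookkeeping -- correctly matching each pyramidal bond symbol occurring in~\eqref{equation:mu} with its orientation-reversed partner (which carries the same $\mu$-number by Rule~\ref{rule:conjugates}) and with the quadrilateral in whose octahedral bonds it appears; once this dictionary is written down, both bounds are immediate and the list of four triples follows.
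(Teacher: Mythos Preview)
Your proposal is correct and follows essentially the same route as the paper's own proof: both arguments add two rows of the linear system~\eqref{equation:mu} to express $\mu_{\ObondA}^{ij}+\mu_{\ObondB}^{ij}+\mu_{\ObondC}^{ij}$ as a sum of two pyramidal $\mu$-numbers, then read the bounds $1\le\mu_{\Pbonddl}^{v}+\mu_{\Pbonddo}^{v}\le 2$ (and the analogous inequality for $\mu_{\Pbondal}^{v}+\mu_{\Pbondao}^{v}$) directly from Table~\ref{table:bonds}. Your version is simply a bit more explicit about invoking Rule~\ref{rule:conjugates} to pass between a bond and its orientation-reversed partner, which the paper leaves implicit in the way the table headers are paired.
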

\begin{proof}
By Table~\ref{table:bonds} from Rule~\ref{rule:table}, we have $1 \leq \mu_{\Pbonddl}^{v} + \mu_{\Pbonddo}^{v} \leq 2$ for every $v \in \{1, \dotsc, 6\}$, 
and similarly for $\mu_{\Pbondal}^{v} + \mu_{\Pbondao}^{v}$. 
It follows by Equation~\eqref{equation:mu} from Rule~\ref{rule:equations} that $\mu^{ij} \in \{2, 4\}$ for all $ij \in \{12, 34, 56\}$. 
The statement is then proven.
\end{proof}

Now we analyze the cases from Lemma~\ref{lemma:four_cases} one by one.

\begin{description}
  \item[Case $(4,4,4)$:]
   From Equation~\eqref{equation:mu}, we know that for all quadrilaterals~$ij$ in~$\GOct$
   \begin{align*}
    \mu_{\ObondA}^{ij} + \mu_{\ObondB}^{ij} &= \mu_{\Pbonddo}^k \in \{0, 1\} \text{ for a suitable } k\,, \\
    \mu_{\ObondA}^{ij} + \mu_{\ObondC}^{ij} &= \mu_{\Pbondao}^\ell \in \{0, 1\} \text{ for a suitable } \ell\,.
   \end{align*}
   Moreover, by assumption we have
   \[
    2 (\mu_{\ObondA}^{ij} + \mu_{\ObondB}^{ij} + \mu_{\ObondC}^{ij}) = 4 \,.
   \]
   This implies
   \[
    \mu_{\ObondA}^{ij} = 0, \quad
    \mu_{\ObondB}^{ij} = \mu_{\ObondC}^{ij} = 1 \,.
   \]
   The equations on the edge lengths from Rule~\ref{rule:lengths} imposed by the fact that $\mu_{\ObondB}^{ij} = \mu_{\ObondC}^{ij} = 1$ are, in the case $ij = 56$:
   \begin{align*}
    \ell_{13} - \ell_{32} - \ell_{24} + \ell_{41} &= 0, \\
    \ell_{13} + \ell_{32} - \ell_{24} - \ell_{41} &= 0.
   \end{align*}
   This implies that $\ell_{13} = \ell_{24}$ and $\ell_{32} = \ell_{41}$. 
Namely, opposite edges in the three quadrilaterals of~$\GOct$ have the same length (see Figure~\ref{figure:same_length}). 
   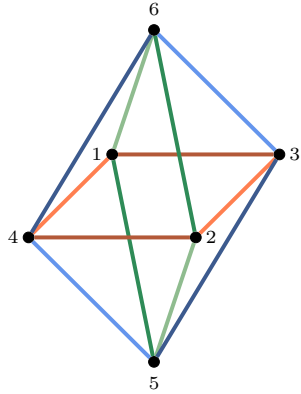
\begin{figure}[ht]
		\centering
			\begin{tikzpicture}[scale=1.1,baseline={(0,0)}]
				\node[vertex,label={[labelsty,label distance=-2pt]left:$1$}] (1) at ($(-1,0)+1/2*(1,1)$) {};
				\node[vertex,label={[labelsty,label distance=-2pt]right:$2$}] (2) at ($(1,0)-1/2*(1,1)$) {};
				\node[vertex,label={[labelsty,label distance=-2pt]right:$3$}] (3) at ($(1,0)+1/2*(1,1)$) {};
				\node[vertex,label={[labelsty,label distance=-2pt]left:$4$}] (4) at ($(-1,0)-1/2*(1,1)$) {};
				\node[vertex,label={[labelsty]below:$5$}] (5) at (0,-2) {};
				\node[vertex,label={[labelsty]above:$6$}] (6) at (0,2) {};
				\draw[edge,cole6] (1)edge(3);
				\draw[edge,cole4] (1)edge(4);
				\draw[edge,cole3] (1)edge(5);
				\draw[edge,cole5] (1)edge(6);
				\draw[edge,cole4] (2)edge(3);
				\draw[edge,cole6] (2)edge(4);
				\draw[edge,cole5] (2)edge(5);
				\draw[edge,cole3] (2)edge(6);
				\draw[edge,cole2] (3)edge(5);
				\draw[edge,cole1] (3)edge(6);
				\draw[edge,cole1] (4)edge(5);
				\draw[edge,cole2] (4)edge(6);
			\end{tikzpicture}
    \caption{The edge length situation in Case~$(4,4,4)$: equal color corresponds to equal length.}
    \label{figure:same_length}
   \end{figure}
   Now notice that a parameter count shows that 
   an octahedron whose opposite edges in each quadrilateral have equal length possesses a line-symmetric motion. 
   Since all pyramids are simple, there is exactly one way in the motion under consideration to extend a realization of a pyramid. 
   Since all pyramids are general, each of them admits exactly one motion. 
   Therefore, such unique extension must be in the line-symmetric motion. 
  \item[Case $(4,4,2)$:]
   From Equation~\eqref{equation:mu} and Table~\ref{table:bonds} from Rule~\ref{rule:table}, 
   we infer that the pyramids \quadrefE\ and \quadrefF\ are general, 
   while \quadrefA\ and \quadrefB\ are odd deltoids and \quadrefC\ and \quadrefD\ are even deltoids. 
   Moreover, from the fact that $\mu^{12} = \mu^{34}= 4$, we deduce as in Case $(4,4,4)$  
   that the opposite edges in the quadrilaterals~$12$ and~$34$ have the same length. 
   We now show that the opposite edges in the quadrilateral~$56$ have the same length, 
   so as in Case $(4,4,4)$ we conclude that we have a Type~I flexible octahedron. 
   Consider a realization for which the pyramid~\quadrefA\ is flat;
   then we have that $1$, $3$, and~$4$ are collinear. 
   Let us now look at the pyramid~\quadrefC\ for that realization: we would like to conclude that \quadrefC\ is flat as well. 
   Since \quadrefA\ is flat, we have that the dihedral angle between the faces $135$ and $136$ is either $0$ or~$\pi$; 
   however, this is a simple angle for~\quadrefC, hence by Lemma~\ref{fact:flat_deltoid} also \quadrefC\ is flat. 
Therefore, the vertices~$1$, $2$, $3$, and~$4$ are collinear in that realization, and all the vertices are coplanar.
Then the quadrilateral~$34$ is, in that realization, a parallelogram or an antiparallelogram (see Figure~\ref{figure:collinear_flat}).
Thus, the footpoint of the midpoint of the diagonal $\{5,6\}$ on the line $1234$ is the midpoint of the diagonal~$\{1,2\}$. 
By considering the quadrilateral~$12$, we get that the footpoint of the midpoint of the diagonal $\{5,6\}$ on the line $1234$ is the midpoint of the diagonal~$\{3,4\}$.
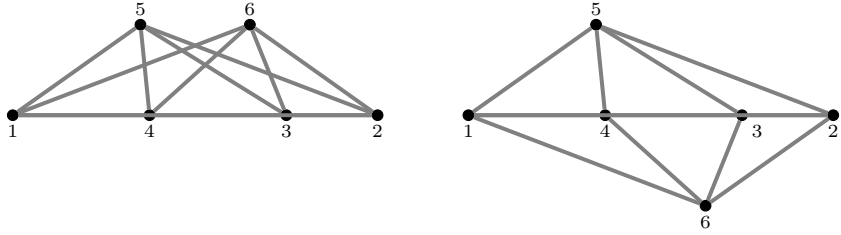
\begin{figure}[ht]
	\centering
	\begin{tikzpicture}[scale=1.2,baseline={(0,0)}]
		\node[vertex,label={[labelsty,label distance=-2pt]below:$1$}] (1) at (-2,0) {};
		\node[vertex,label={[labelsty,label distance=-2pt]below:$2$}] (2) at (2,0) {};
		\node[vertex,label={[labelsty,label distance=-2pt]below:$3$}] (3) at (1,0) {};
		\node[vertex,label={[labelsty,label distance=-2pt]below:$4$}] (4) at (-0.5,0) {};
		\node[vertex,label={[labelsty,label distance=-2pt]above:$5$}] (5) at (-0.6,1) {};
		\node[vertex,label={[labelsty,label distance=-2pt]above:$6$}] (6) at (0.6,1) {};
		\draw[edge] (2)edge(5);
		\draw[edge] (2)edge(6);
		\draw[edge] (2)edge(3);
		\draw[edge] (2)edge(4);
		\draw[edge] (3)edge(5);
		\draw[edge] (3)edge(6);
		\draw[edge] (4)edge(5);
		\draw[edge] (4)edge(6);
		\draw[edge] (3)edge(1);
		\draw[edge] (4)edge(1);
		\draw[edge] (5)edge(1);
		\draw[edge] (6)edge(1);
	\end{tikzpicture}
	\qquad
	\begin{tikzpicture}[scale=1.2,baseline={(0,0)}]
		\node[vertex,label={[labelsty,label distance=-2pt]below:$1$}] (1) at (-2,0) {};
		\node[vertex,label={[labelsty,label distance=-2pt]below:$2$}] (2) at (2,0) {};
		\node[vertex,label={[labelsty,label distance=-2pt]-40:$3$}] (3) at (1,0) {};
		\node[vertex,label={[labelsty,label distance=-2pt]below:$4$}] (4) at (-0.5,0) {};
		\node[vertex,label={[labelsty,label distance=-2pt]above:$5$}] (5) at (-0.6,1) {};
		\node[vertex,label={[labelsty,label distance=-2pt]below:$6$}] (6) at (0.6,-1) {};
		\draw[edge] (2)edge(5);
		\draw[edge] (2)edge(6);
		\draw[edge] (2)edge(3);
		\draw[edge] (2)edge(4);
		\draw[edge] (3)edge(5);
		\draw[edge] (3)edge(6);
		\draw[edge] (4)edge(5);
		\draw[edge] (4)edge(6);
		\draw[edge] (3)edge(1);
		\draw[edge] (4)edge(1);
		\draw[edge] (5)edge(1);
		\draw[edge] (6)edge(1);
	\end{tikzpicture}
	\caption{Flat realization in Case $(4,4,2)$: all vertices are coplanar, four of them are collinear, and the quadrilateral $34$ can be a parallelogram or an antiparallelogram.}
	\label{figure:collinear_flat}
\end{figure}
Hence we obtain
\[
 \ell_{13} = \ell_{24}
 \quad \text{and} \quad
 \ell_{41} = \ell_{32} \,.
\]
Thus this case is a special case of a Type~I flexible octahedron allowing a flat realization.
  \item[Case $(4,2,2)$:]
   Here we see that the pyramids \quadrefC\ and \quadrefD\ are even deltoids, and the pyramids~\quadrefE\ and \quadrefF\ are odd deltoids, 
while the pyramids~\quadrefA\ and \quadrefB\ are either rhomboids or lozenges. 
Let us suppose that we are in a flat realization for the pyramid~\quadrefC. 
Then the rhomboid~\quadrefB\ has one of the angles which is $0$ or~$\pi$, hence it is flat as well. 
This implies that we have two flat realizations for the octahedron as a whole. 
Since we have deltoids, as in Case $(4,4,2)$ we have collinearities in a flat realization, 
namely the following triples of vertices are collinear 
(keep into account that \quadrefC\ and \quadrefD\ are even deltoids, while \quadrefE\ and \quadrefF\ are odd deltoids):
\[
 \{1, 2, 3\} \qquad
 \{1, 2, 6\} \qquad
 \{1, 2, 5\} \qquad
 \{1, 2, 4\} \,. 
\]
Therefore, all the vertices are collinear, unless in this special flat realization we have that $1$ and $2$ coincide%
\footnote{Recall that we forbid two vertices to coincide for a general realization in a motion, 
but they are allowed to coincide in special realizations.}. 
If the vertices are collinear in this special realization, then all the triangular faces are degenerate,
and so all vertices are collinear in any realization of the motion, 
but in this case the octahedron cannot move at all. 
Hence only the situation where $1$ and $2$ coincide can happen (see Figure~\ref{figure:global_flat}). 
\begin{figure}[ht]
	\centering
	\begin{tikzpicture}[scale=1.2]
		\node[vertex,label={[labelsty,label distance=-2pt]left:$3$}] (3) at (-2,3) {};
		\node[vertex,label={[labelsty,label distance=-2pt]right:$4$}] (4) at (2,3) {};
		\node[vertex,label={[labelsty,label distance=-2pt]left:$5$}] (5) at (-1.5,1) {};
		\node[vertex,label={[labelsty,label distance=-2pt]right:$6$}] (6) at (1.5,1) {};
		\node[vertex,label={[labelsty]below:$1=2$}] (1) at (0,0) {};
		\draw[edge] (3)edge(5);
		\draw[edge] (3)edge(6);
		\draw[edge] (4)edge(5);
		\draw[edge] (4)edge(6);
		\draw[edge] (3)edge(1);
		\draw[edge] (4)edge(1);
		\draw[edge] (5)edge(1);
		\draw[edge] (6)edge(1);
	\end{tikzpicture}
	\caption{Global flat realization of an octahedron in Case~$(4,2,2)$: 
	the vertices~$1$ and~$2$ must coincide in this realization.}
	\label{figure:global_flat}
\end{figure}
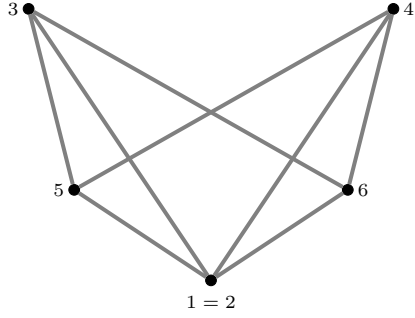

For this situation to happen, we must have
\[
 \ell_{16} = \ell_{26} \,,
 \quad
 \ell_{13} = \ell_{32} \,,
 \quad
 \ell_{41} = \ell_{24} \,,
 \quad
 \ell_{15} = \ell_{25} \,.
\]
Moreover, the fact that $\mu^{12} = 4$ implies, as in Case $(4,4,4)$, that
\[
 \ell_{36} = \ell_{45}
 \quad \text{and} \quad 
 \ell_{46} = \ell_{35} \,.
\]
Altogether, this implies that for a general realization in this motion the vertices $3$, $4$, $5$, and~$6$ are coplanar 
and that $1$ and $2$ are symmetric with respect to the plane spanned by the coplanar vertices.
Moreover, the planar quadrilateral~$12$ is either a parallelogram or an antiparallelogram. 
Furthermore, from the fact that in the global flat realization of the octahedron the vertices~$1$ and~$2$ coincide,
it follows that all the deltoids are of ``coinciding'' type. 
Using Table~\ref{table:bonds} from Rule~\ref{rule:table} we get that for the two odd deltoids
\[
 \mu_{\Pbondar} = 1 
 \quad \text{and} \quad
 \mu_{\Pbondao} = 0 \,,
\]
while for the two even deltoids
\[
 \mu_{\Pbonddr} = 1 
 \quad \text{and} \quad
 \mu_{\Pbonddo} = 0 \,.
\]
Therefore by Equations~\eqref{equation:mu} from Rule~\ref{rule:equations}, we obtain
\begin{align*}
 \mu_{\ObondC}^{56} &= 1, & \mu_{\ObondA}^{56} = \mu_{\ObondB}^{56} &= 0, \\
 \mu_{\ObondB}^{34} &= 1, & \mu_{\ObondA}^{34} = \mu_{\ObondC}^{34} &= 0.
\end{align*}
By using Rule~\ref{rule:lengths} we get the constraints
\[
 -\ell_{41} - \ell_{24} + \ell_{32} + \ell_{13} = 0
 \quad \text{and} \quad 
 \ell_{25} + \ell_{51} - \ell_{16} - \ell_{62} = 0  \,.
\]
Taking into account the previous relations between lengths, these imply the equalities
\[
 \ell_{32} = \ell_{24}
 \quad \text{and} \quad
 \ell_{25} = \ell_{16} \,.
\]
Altogether, these equations imply that, if the quadrilateral~$12$ is an antiparallelogram, then the projection 
of the vertices~$1$ and~$2$ on the plane spanned by~$3$,$4$,$5$, and~$6$ lies, for all realizations of the motion, on the 
symmetry line of the antiparallelogram. Hence we get a Type II flexible octahedron. 
If the quadrilateral~$12$ were a parallelogram, then the projection of the vertices~$1$ and~$2$ would be at the 
intersection of its two symmetry axes; but then we would have a convex flexible octahedron, 
and this conflicts with Cauchy's theorem.
\item[Case $(2,2,2)$:]
  In this case, all the $6$ pyramids are rhomboids or lozenges.
  Moreover, we have
  \[
   \mu_{\ObondA}^{ij} + \mu_{\ObondB}^{ij} + \mu_{\ObondC}^{ij} = 1
  \]
  for any $ij \in \{12, 34, 56\}$, and so exactly one of these three quantities equals~$1$, 
  while the other two are zero. 
  We hence obtain three linear constraints for the edge lengths, 
  one for each of the three quadrilaterals in~$\GOct$. 
  Therefore we have an octahedron of Type~III.
 \end{description}
 
The classification when all the pyramids are simple is then completed.
We conclude this section by showing that we can always reduce to the simple case.
Let us describe this reduction procedure as follows.

\begin{reduction}
Suppose that a pyramid, say~\quadrefA, is not simple. 
This means that there exist at least two realizations of the octahedron extending a general realization of~\quadrefA.
This implies that in all those realizations the points $3$, $4$, $5$, and~$6$ must be coplanar.
Then we construct another octahedron by substituting the realization of vertex~$2$ 
with the mirror of the realization of the vertex~$1$ with respect to the plane spanned by~$3$, $4$, $5$, and~$6$; see Figure~\ref{figure:reduction}.
By the hypothesis on the initial octahedron we get that the new octahedron is flexible, 
and it has the further property that pyramids~\quadrefA\ and~\quadrefB\ are simple. 
Here the fact that \quadrefA\ and \quadrefB\ are simple is ensured by Assumption~\ref{assumption}, 
which prevents different vertices from having the same realization.
\end{reduction}

\begin{figure}[ht]
 \centering
 \includegraphics[width=.4\textwidth]{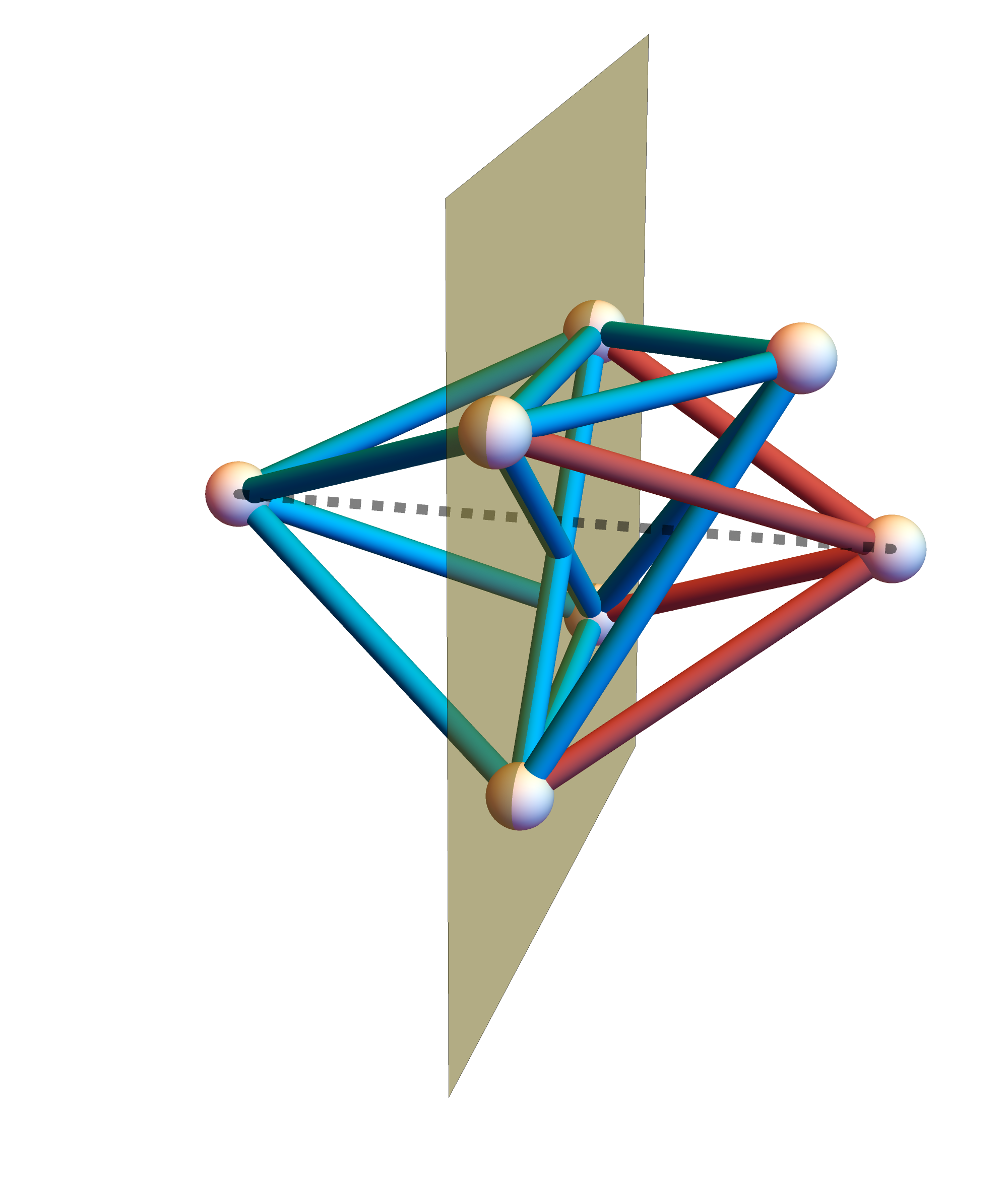}
 \caption{An illustration of the reduction process: the original octahedron (in blue) is transformed into one where the red pyramid substitutes the blue pyramid on the right of the planar quadrilateral.}
 \label{figure:reduction}
\end{figure}

We claim that we can repeat this procedure finitely many times (actually, three times)
and obtain a situation where all the pyramids are simple.
In fact, notice that the reduction process preserves coplanarity in the following sense.
Suppose that pyramid~\quadrefA\ is not simple and apply the reduction.
This means that vertices $3,4,5,6$ are coplanar, and now $1$ and~$2$ are symmetric with respect to that plane,
so in particular they lie on a perpendicular line to the plane $3456$.
Suppose, furthermore, that after the reduction pyramid~\quadrefC\ is not simple, thus $1, 2, 5, 6$ are coplanar. 
In this situation, the mirror of~$3$ with respect to the plane $1256$ equals
the mirror of~$3$, in the plane $3456$, with respect to the line spanned by~$5$ and~$6$.
Hence after the second reduction, we have that $3,4,5,6$ are coplanar, and $1,2,5,6$ are coplanar.
Therefore the reduction can be applied only thrice. 

As a by-product of the previous reduction we have that when four vertices of the octahedron are coplanar,
the other two vertices are symmetric with respect to that plane. 
This implies that after the reduction the four pyramids~\quadrefV\ with vertices~$v$ on that plane can only be deltoids or lozenges. 

To refine the by-product stated in the last paragraph,
we introduce the notion of \emph{multiplicity} of an edge of the octahedron.
A specific rule discusses the behavior of edge multiplicity.

\begin{definition}
\label{definition:multiplicity}
 Consider a motion of an octahedron. 
 The \emph{multiplicity} of an edge of the octahedron is the number (up to rotations and translations)
 of realizations of the octahedron that have the same general value of the dihedral  angle 
 between the two triangular faces adjacent to the edge. 
\end{definition}

\begin{enumerate}\renewcommand{\theenumi}{\textbf{R\arabic{enumi}}}\renewcommand{\labelenumi}{\theenumi:}\setcounter{enumi}{4}
  \item\label{rule:edge_multiplicity}
  Edges may have multiplicity~$1$, $2$, or~$4$. 
  Two opposite edges of a pyramid~\quadrefV\ incident to~$v$ have the same multiplicity;
  hence all the edges in a quadrilateral of~$\GOct$ have the same multiplicity.
  The multiplicity of two neighboring edges incident to~$v$ in a pyramid~\quadrefV\ 
  may at most differ by a factor of~$2$.
  A general pyramid has all edges of multiplicity~$2$ or~$4$. 
  The edges of a deltoid have multiplicity $(2,4)$ or $(1,2)$.
  The edges of a rhomboid or a lozenge have all multiplicity~$1$ or all multiplicity~$2$.
\end{enumerate}

With the notion of multiplicity at hand, we can say that
if we apply the reduction at pyramid~\quadrefA, 
then \quadrefC, \quadrefD, \quadrefE, and~\quadrefF\ are deltoids --- since they are symmetric
with respect to the plane $3456$ --- whose edges incident to~$1$ or~$2$ are simple 
and whose other edges are double, or lozenges with only simple edges.

We now explore all three possible cases that may appear after the reduction,
namely we can have three, two, or one planar quadrilateral in the octahedron.

It is easy to see that there cannot be three planar quadrilaterals: 
all vertices would have to lie on coordinate axes, 
and Pythagoras' Theorem would give an easy proof of rigidity.

Assume that the vertices $3, 4, 5, 6$ are coplanar and $1, 2, 5, 6$ are coplanar as well. 
By the above properties of the octahedron, it follows that all edges are simple and all pyramids are lozenges. 
Thus the reduced octahedron belongs to Case $(2, 2, 2)$. 
Therefore it has two flat realizations. 
However, when a lozenge is in a flat realization, then two opposite edges have to coincide. 
This, however, cannot happen for all lozenges.
In fact, in a flat configuration either the points~$1$ and~$2$, or the points~$3$ and~$4$ must coincide,
since the planes $3456$ and $1256$ are orthogonal to each other in a general realization of the motion,
and $1$ and $2$ are symmetric, as well as $3$ and $4$. 
Moreover, in any case the points $1=2$ or $3=4$ are collinear with $5$ and $6$ in the flat realization.
For simplicity, let us suppose to be in a flat position where $3$ and $4$ coincide. 
Hence the situation is the one depicted in Figure~\ref{figure:flatpos} (recall that $1$ and $2$ are symmetric with respect to the line $56$).
\begin{figure}[ht]
  \centering
  \begin{tikzpicture}[baseline={(0,0)},scale=1.1]
    \clip (-0.75,-1.2) rectangle (3.35,1.5);
    \node[vertex,label={[labelsty]left:$1=2$}] (1) at (0,0) {};
    \node[vertex,label={[labelsty]30:$3$}] (3) at (1,1) {};
    \node[vertex,label={[labelsty]left:$4$}] (4) at (1,-1) {};
    \node[vertex,label={[labelsty]right:$6$}] (6) at (3,0) {};
    \coordinate (o5) at ($(6)!1.5!(3)$);
    \coordinate (base5) at ($(1)!(o5)!(3)$);% projection on 1-3
    \coordinate (dir5) at ($(o5)!5!(base5)$);% refelcted o5 at 1-3
    \path[name path=edge16] (1)--(6);
    \path[name path=edge3dir5] (base5)--(dir5);
    \path[name intersections={of=edge16 and edge3dir5, by={n5}}]; % intersect base line with reflection of o5
    \node[vertex,label={[labelsty,label distance=-2pt]-30:$5$}] (5) at (n5) {};    
    \begin{scope}
      \clip (3) -- ($(3)!-1!(6)$) -- ($(3)!0.5!(1)$) -- cycle;
      \draw[col5] (3) circle[radius=0.5];
    \end{scope}
    \begin{scope}
      \clip ($(3)$) -- ($(1)$) -- ($(6)$) -- ($(3)$) -- cycle;
      \draw[col6] (3) circle[radius=0.55];
    \end{scope}
    \node[pin={[labelsty,pin distance=0.6cm,col5,pin edge={col5,shorten <=0.25cm}]182:$\beta$}] at (3) {};
    \node[pin={[labelsty,pin distance=0.5cm,col6,pin edge={col6,shorten <=0.25cm,shorten >=-0.1cm}]-90:$\pi-\beta$}] at (3) {};
    \draw[dashed] (3)edge($(3)!-0.5!(6)$);
    \draw[edge] (1)edge(3) (1)edge(4) (1)edge(5) (1)edge(6) (3)edge(5) (3)edge(6) (4)edge(5) (4)edge(6);
  \end{tikzpicture}
  \begin{tikzpicture}[baseline={(0,0)},scale=1.1]
    \node[vertex,label={[labelsty]left:$1$}] (1) at (0,0) {};
    \node[vertex,label={[labelsty,label distance=-2pt]20:$3=4$}] (3) at (1,1) {};
    \node[vertex,label={[labelsty]right:$6$}] (6) at (3,0) {};
    \node[vertex,label={[labelsty,label distance=1pt]left:$5$}] (5) at ($(6)!1.5!(3)$) {};
    \coordinate (old2) at (0,0);
    \coordinate (base2) at ($(3)!(old2)!(6)$);
    \node[vertex,label={[labelsty]right:$2$}] (2) at ($(old2)!2!(base2)$) {};
    \begin{scope}
      \clip (3) -- ($(3)!-1!(6)$) -- ($(3)!0.5!(1)$) -- cycle;
      \draw[col5] (3) circle[radius=0.5];
    \end{scope}
    \begin{scope}
      \clip ($(3)$) -- ($(1)$) -- ($(6)$) -- ($(3)$) -- cycle;
      \draw[col6] (3) circle[radius=0.55];
    \end{scope}
    \node[pin={[labelsty,pin distance=0.6cm,col5,pin edge={col5,shorten <=0.25cm}]182:$\beta$}] at (3) {};
    \node[pin={[labelsty,pin distance=0.5cm,col6,pin edge={col6,shorten <=0.25cm,shorten >=-0.1cm}]-90:$\pi-\beta$}] at (3) {};
    \draw[edge] (1)edge(3) (1)edge(5) (1)edge(6) (2)edge(3) (2)edge(5) (2)edge(6) (3)edge(5) (3)edge(6);
  \end{tikzpicture}
  \caption{Flat positions of an octahedron obtained by applying the reduction process twice. This case, actually, does never occur.}
  \label{figure:flatpos}
\end{figure}
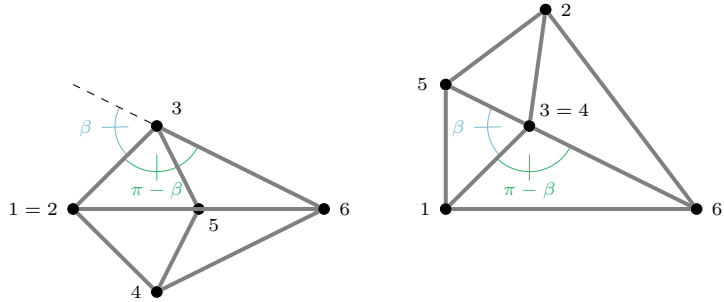
We now show that, in this situation, the octahedron is actually rigid, so this case can never happen.
In fact, we prove that the constraints derived from the flat realization, together with the fact
that we have six lozenges, are not compatible with the orthogonality of the planes $1256$ and $3456$. 
To show this, we focus on the dihedral angle at the edge~$23$: 
using the constraints from the flat realization, we can fix (up to scaling) vertex $2$ to be at $(0,0,0)$, 
vertex $3$ to be at $(b,0,0)$ (for some $b \in \R_{>0} \setminus \{2\}$) for the whole motion; 
we parametrize vertex $5$ as $\bigl(1,-r \cos(t),r \sin(t)\bigr)$ for some $r \in \R_{>0}$, 
so vertex~$6$ has coordinates $\bigl(\frac{b}{2-b}, \frac{br}{2-b}, 0\bigr)$; see Figure~\ref{figure:parametrization}.
\begin{figure}[ht]
\centering
  \begin{tikzpicture}[scale=1.6]
    \node[vertex,label={[labelsty]left:$2$}] (2) at (0,0) {};
    \coordinate (a) at (1,0);
    \node[vertex,label={[labelsty]right:$3$}] (3) at (1.2,0) {};
    \node[vertex,label={[labelsty]below:$5$}] (5) at (1,-0.5) {};
    \node[vertex,label={[labelsty]above:$6$}] (6) at (1.5,0.75) {};
    \begin{scope}
      \clip ($(2)$) -- ($(6)$) -- ($(3)$) -- ($(2)$) -- cycle;
      \draw[col6] (2) circle[radius=0.4];
    \end{scope}
    \begin{scope}
      \clip ($(2)$) -- ($(5)$) -- ($(3)$) -- ($(2)$) -- cycle;
      \draw[col6] (2) circle[radius=0.45];
    \end{scope}
    \node[pin={[labelsty,pin distance=0.6cm,col6,pin edge={col6,shorten <=0.25cm}]6:$\beta$}] at (2) {};
    \node[pin={[labelsty,pin distance=0.7cm,col6,pin edge={col6,shorten <=0.35cm}]-6:$\beta$}] at (2) {};
    \draw[edge] (2)--(3)--(6)--(2);
    \draw[edge] (2)--(5)--(3);
  \end{tikzpicture}
  \caption{To show that the case of two reductions does not occur, we focus n the dihedral angle at edge~$23$: in the flat position, the angles $\hat{325}$ and $\hat{326}$ are equal, and the vertices $3$, $5$, and~$6$ are collinear.}
  \label{figure:parametrization}
\end{figure}
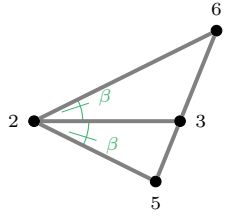
However, a computation shows that the inner product between two normal vectors to the planes~$356$ and~$256$
during the motion is not 0 for any choice of~$b$ and~$r$. 
This is not compatible with the fact that the planes $1256$ and $3456$ are orthogonal.

Assume now that only $3, 4, 5, 6$ are coplanar. 
By what we said before, this means we applied the reduction process only once.
Then the eight edges incident to~$1$ or~$2$ are simple. 
The remaining four edges can either be simple or be double. 
We distinguish two cases.

\begin{description}
 \item[\textbf{Case A}.]
 All edges are simple. 
 Then by Rule~\ref{rule:edge_multiplicity} we are again in Case $(2, 2, 2)$, 
 now with four lozenges (namely~\quadrefC, \quadrefD, \quadrefE, and~\quadrefF) and two rhomboids or lozenges (namely~\quadrefA\ and~\quadrefB). 
 Since we are in Case $(2,2,2)$, we have two flat realizations. In one of them, the vertices $1$ and $2$ coincide.
 In the other, using the fact that~\quadrefC, \quadrefD, and \quadrefE\ are lozenges, we have that $3$, $4$, $5$, and $6$ are collinear.
 In this realization, then $1$ and $2$ are symmetric with respect to the line $3456$. 
 Now we invoke the last of our rules:
 \begin{enumerate}\renewcommand{\theenumi}{\textbf{R\arabic{enumi}}}\renewcommand{\labelenumi}{\theenumi:}\setcounter{enumi}{5}
  \item\label{rule:parallelogram}
  In the situation of Case~A, the plane quadrilateral~$12$ is either an antiparallelogram or a parallelogram.
 \end{enumerate} 
 Consider the flat realization of the octahedron where the four vertices $3$, $4$, $5$, and~$6$ are collinear. 
 Because the plane quadrilateral~$12$ is an antiparallelogram or a parallelogram, 
 it follows that the edges~$35$ and~$46$ are equal in length. 
 Because the pyramid~$\quadrefA$ is a rhomboid or a lozenge, 
 it follows that the angles at~$1$ in the two triangles~$135$ and~$146$ are equal --- 
 they could not be supplementary because this would contradict collinearity of $3,4,5,6$. 
 Hence the triangles $135$ and $146$ have one side in common, the opposite angle in common,
 and the normal height in common. 
 It follows that the two triangles are congruent. 
 It follows that, for all configurations, the footpoint of vertex~$1$ to the plane lies on 
 the symmetry line of the antiparallelogram or in the midpoint of the parallelogram, depending whether the plane
 quadrilateral~$12$ is an antiparallelogram or a parallelogram. 
 Then the footpoint of vertex~$1$ lies in the symmetry line of the plane antiparallelogram~$12$ or in the midpoint of the parallelogram,  
 also for the original octahedron.
 The same holds for the footpoint of vertex~$2$, analogously. 
 It follows that the original octahedron, before the reduction process, is plane-symmetric.
 \item[\textbf{Case B}.] 
 The four edges are double. 
 By Rule~\ref{rule:edge_multiplicity} we have four deltoids and two rhomboids or lozenges,
 thus we are in the $(4, 2, 2)$ case. 
 Then the plane quadrilateral~$12$ is an antiparallelogram,
 and the footpoint of vertex~$1$ to the plane lies on the symmetry line of the antiparallelogram.
 Say we had before reduced by replacing~$2$ by the mirror of~$1$ at the plane~$3456$.
 Then the footpoint of vertex~$1$ lies in the symmetry line of the plane antiparallelogram~$12$, 
 also for the original octahedron.
 The same holds for the footpoint of vertex~$2$, analogously. 
 It follows that the original octahedron is plane-symmetric.
\end{description}

\section{From the space to the sphere}
\label{reduction}

Now that we showed that the classification of flexible octahedra can be achieved once 
we accept the rules introduced in Section~\ref{classification}, 
we are left with the task of explaining why the rules are correct.

We start by reducing the problem of flexibility of octahedra to a 
problem of flexibility of graphs on the sphere,
as in \cite{Izmestiev2017,Kokotsakis1933,Stachel2010}. 
For each realization in $3$-space of an octahedron compatible with a given edge labeling, 
the normalized vectors of the edges define a configuration of points on the unit sphere. 
For any triangular face of the octahedron, the angle between two edge vectors is determined
by the edge lengths of the octahedron. 
Let us define~$\GEdg$ to be the graph whose vertices are the edges of~$\GOct$, 
and where two vertices are connected by an edge 
when the corresponding edges in~$\GOct$ belong to the same triangular face of the octahedron 
(see Figure~\ref{figure:graph_edg}).
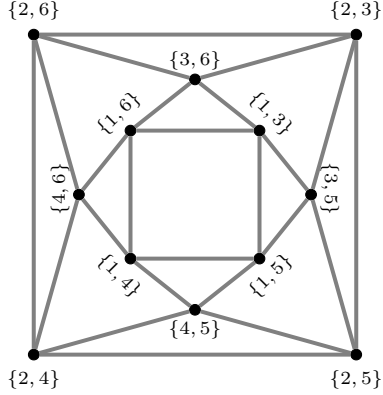
\begin{figure}[ht]
	\centering
	\begin{tikzpicture}[scale=1.2]
		\node[vertex,label={[labelsty,rotate=-45,label distance=-1pt]above:$\{1,3\}$}] (1) at (0.707107, 0.707107) {};
		\node[vertex,label={[labelsty,rotate=-45,label distance=-1pt]below:$\{1,4\} $}] (2) at (-0.707107, -0.707107) {};
		\node[vertex,label={[labelsty,rotate=45,label distance=-1pt]below:$\{1,5\}$}] (3) at (0.707107, -0.707107) {};
		\node[vertex,label={[labelsty,rotate=45,label distance=-1pt]above:$\{1,6\}$}] (4) at (-0.707107, 0.707107) {};
		\node[vertex,label={[labelsty]above:$\{2,3\}$}] (5) at (1.76777, 1.76777) {};
		\node[vertex,label={[labelsty]below:$\{2,4\}$}] (6) at (-1.76777, -1.76777) {};
		\node[vertex,label={[labelsty]below:$\{2,5\}$}] (7) at (1.76777, -1.76777) {};
		\node[vertex,label={[labelsty]above:$\{2,6\}$}] (8) at (-1.76777, 1.76777) {};
		\node[vertex,label={[labelsty,rotate=-90]above:$\{3,5\}$}] (9) at (1.27279, 0.) {};
		\node[vertex,label={[labelsty,label distance=-2pt]above:$\{3,6\}$}] (10) at (0., 1.27279) {};
		\node[vertex,label={[labelsty,label distance=-2pt]below:$\{4,5\}$}] (11) at (0., -1.27279) {};
		\node[vertex,label={[labelsty,rotate=90]above:$\{4,6\}$}] (12) at (-1.27279, 0.) {};
		\draw[edge] (1)edge(3) (1)edge(4) (1)edge(9) (1)edge(10) (2)edge(3) (2)edge(4) (2)edge(11) (2)edge(12) (3)edge(9) (3)edge(11) (4)edge(10) (4)edge(12) (5)edge(7) (5)edge(8) (5)edge(9) (5)edge(10) (6)edge(7) (6)edge(8) (6)edge(11) (6)edge(12) (7)edge(9) (7)edge(11) (8)edge(10) (8)edge(12);
	\end{tikzpicture}
 \caption{The graph~$\GEdg$: its vertices are the edges of the octahedron, and two vertices are adjacent if they come from the same face of the octahedron.}
 \label{figure:graph_edg}
\end{figure}
From the previous discussion we get that a labeling for the edges of~$\GOct$ induces a labeling of the edges of~$\GEdg$ 
given by the cosine of the angles between edge vectors belonging to the same face.
In formulas, if $\lambda$ is the labeling for~$\GOct$, then the induced labeling for~$\GEdg$ is the map:
\[
 (\{i,j\}, \{m,j\}) 
 \mapsto 
 -\frac{\lambda_{\{i,m\}}^2 - \lambda_{\{i,j\}}^2 - \lambda_{\{m,j\}}^2}{2 \lambda_{\{i,j\}} \, \lambda_{\{m,j\}}} \,.
\]
Hence there is a bijective correspondence, modulo translations, between realizations of the octahedron in $3$-space compatible with~$\lambda$ 
and realizations of the edge graph~$\GEdg$ on the unit sphere compatible with the labeling induced by~$\lambda$.

The choice of the normalized vector corresponding to an edge in~$\GOct$ is not unique 
and depends on an orientation of the edges of the octahedron (any orientation is, in principle, fine). 
Recall that we have already fixed an orientation in Definition~\ref{definition:directions}; 
from now on, we will always refer to this choice of orientation.
Hence, given a realization $\rho \colon \{1, \dotsc, 6\} \longrightarrow \R^3$ of~$\GOct$, 
for each edge $\{i,j\} \in E_{\oct}$ we define the point $q_{\{i,j\}}$ in the unit sphere to be the one such that 
\[
 \rho(i) - \rho(j) = \ell_{ij} \, q_{\{i,j\}} \,,
\]
where we recall from Definition~\ref{definition:directions} that $\ell_{ij} > 0$ 
if $(i,j)$ is an oriented edge in~$\GOctDir$, and $\ell_{ij} = -\ell_{ji}$.
Hence, if $\rho$ is a realization of~$\GOct$ compatible with a labeling~$\lambda$
then the map that associates
\[
 \{ i, j \} \, \mapsto \, q_{\{i,j\}} 
 \quad \text{for all } \{i, j\} \in E_{\oct} = V_{\mathrm{edg}}
\]
is the induced realization of~$\GEdg$ on the unit sphere.

Notice that, once we have a triangle in the octahedron, the labeling induced on the unit vectors of the edges 
forces the three points on the unit sphere to lie on the same great circle. 
Therefore, realizations of~$\GEdg$ induced by realizations of~$\GOct$ look like the one in Figure~\ref{figure:realization_gedg}.
\begin{figure}[ht]
 \centering
 \tikz[baseline={(0,0)}]{\node at (0,0) {\includegraphics[width=.25\textwidth]{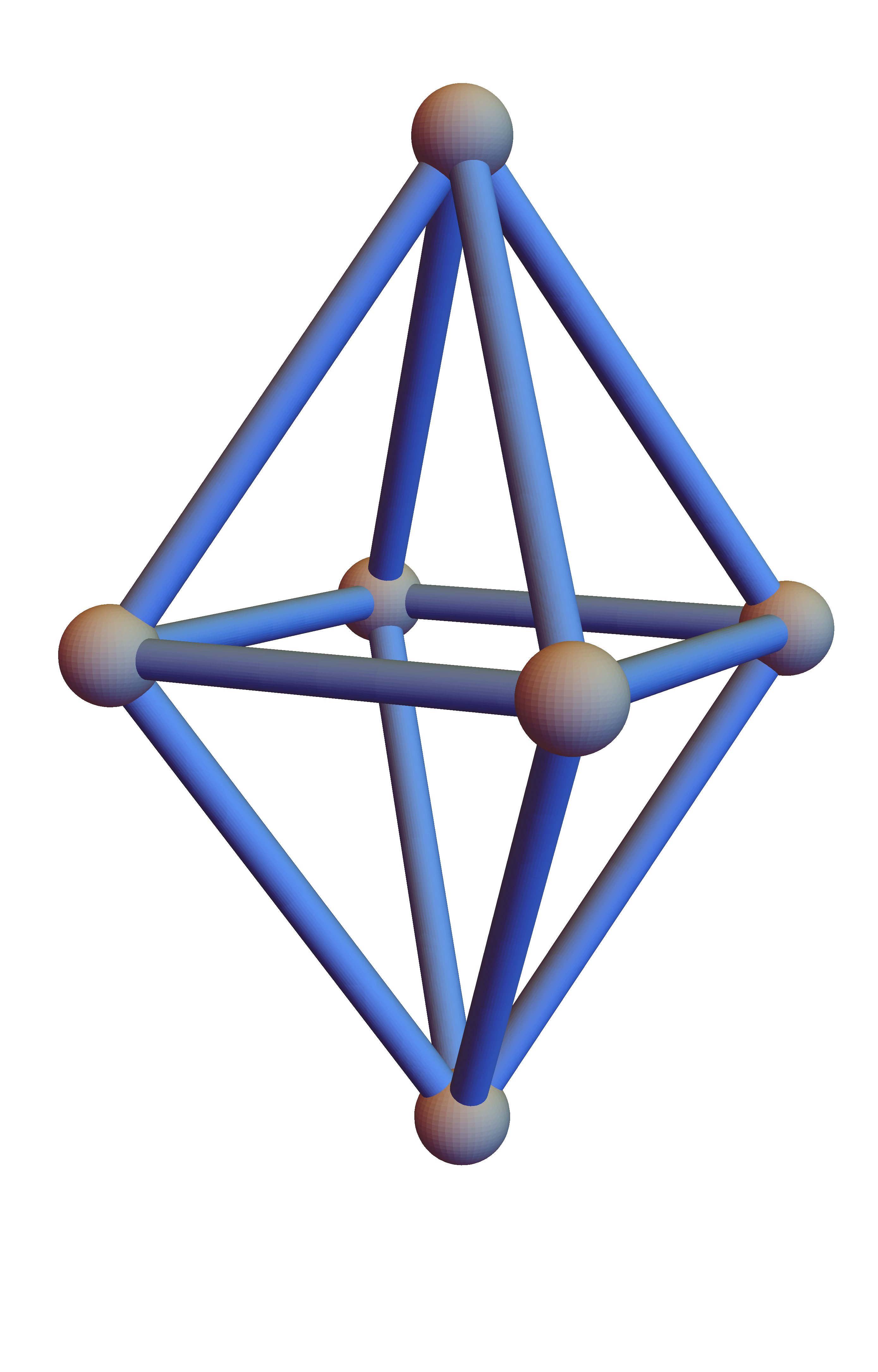}}} \qquad
 \tikz[baseline={(0,0)}]{\node at (0,0) {\includegraphics[width=.3\textwidth]{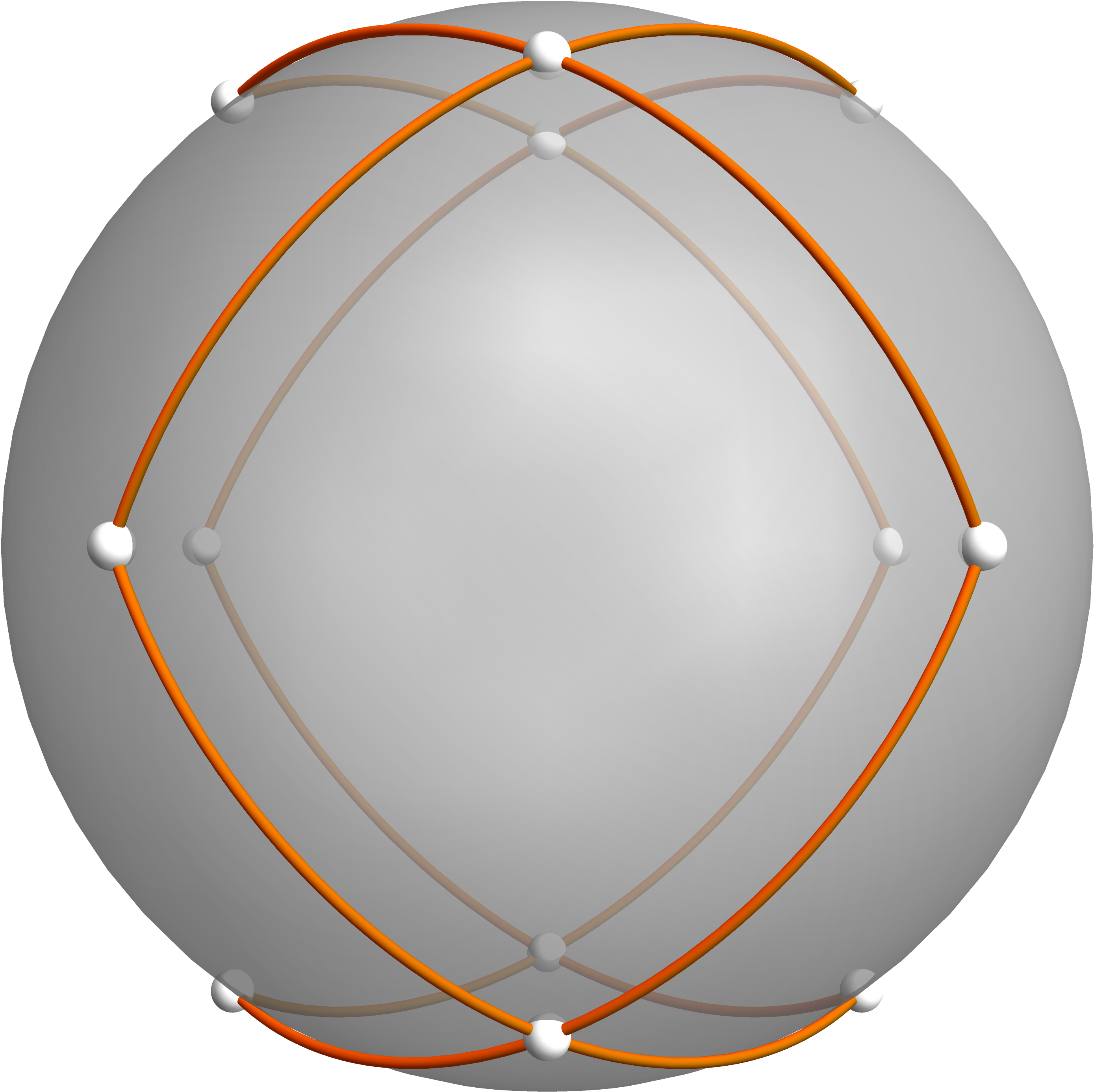}}}
 \caption{A realization of~$\GEdg$ in~$S^2$ (on the right) induced by one of~$\GOct$ in~$\R^3$ (on the left).}
 \label{figure:realization_gedg}
\end{figure}

The paper~\cite{Gallet2019} contains necessary criteria for the flexibility of any graph on a sphere, 
as well as a detailed analysis of spherical quadrilaterals; 
these arise in the current paper as the edges of a pyramid incident with the vertex.
The technique in~\cite{Gallet2019} requires to extend to the complex numbers many notions we encountered so far:
realization, flexibility, and also the unit sphere. 
Therefore, from now on realizations of~$\GOct$ will be maps $\rho \colon \{ 1, \dotsc, 6 \} \longrightarrow \C^3$,
and two realizations will be considered congruent if they differ by a complex isometry, 
which is given by the action of a complex orthogonal matrix followed by a complex translation.
Flexibility for a graph labeling will always mean admitting infinitely many compatible non-congruent realizations,
where now congruence is meant over the complex numbers, but we still consider real-valued labelings.
Compatibility of a realization~$\rho$ with a labeling~$\lambda$ now means that 
\[
 \left\langle
  \rho(i) - \rho(j),
  \rho(i) - \rho(j)
 \right\rangle
 = \lambda_{\{i,j\}}^2
 \quad \text{for all } \{i, j\} \in E_{\oct} \,,
\]
where $\left\langle \cdot, \cdot \right\rangle$ is considered just as a quadratic form, and not a scalar product.
The complexification of the unit sphere will be denoted by
\[
 S^2_{\C} = 
 \bigl\{
  (x,y,z) \in \C^3 
  \, \colon \,
  x^2 + y^2 + z^2 = 1
 \bigr\} \,,
\]
so realizations of~$\GEdg$ will be maps $V_{\mathrm{edg}} \longrightarrow S^2_{\C}$. 
Two such realizations will be congruent if they differ by a complex orthogonal matrix.
As in the spatial case, labelings are real-valued functions $E_{\mathrm{edg}} \longrightarrow \R$.
Compatibility of a realization in~$S^2_{\C}$ with a labeling is again tested via
the standard quadratic form $\left\langle \cdot, \cdot \right\rangle$, 
which in the real setting gives the cosine of the angle between two unit vectors.
As we see from their definition, the construction of the points~$q_{\{i,j\}}$ 
starting from a realization of~$\GOct$ carries over the complex numbers.
Recall however that the numbers~$\ell_{ij}$ are always real.
Flexibility of graphs on the complex sphere is defined analogously to flexibility in~$\C^3$.

From the discussion and the construction in this section, we then obtain that 
a labeling of~$\GOct$ is flexible in~$\C^3$ if and only if 
the corresponding induced labeling for~$\GEdg$ is flexible in~$S_{\C}^2$.

\section{Justification of bonds and their rules}
\label{justifications}

This section provides geometric counterparts of the notions of
octahedral and pyramidal bonds introduced in Section~\ref{classification:objects},
and gives justifications for the rules in Sections~\ref{classification:rules} and~\ref{classification:classification}. 
The needed theory is the one developed by the authors in~\cite{Gallet2019} about flexibility of graphs on the sphere, 
together with a new finding related to Rule~\ref{rule:lengths}, 
namely to the connection between bonds and linear conditions on the edge lengths of flexible octahedra. 
We recall here the main concepts of~\cite{Gallet2019} and refer to that work for proofs and precise constructions.

The geometric concept of \emph{bond} arises as follows. 
First of all, we define what we mean by \emph{configuration space} of a flexible labeling of a graph.
This notion makes it possible to consider ``realizations up to isometries'' as an algebraic variety,
and so it makes it possible to use the tools of algebraic geometry to study it.
It turns out that these varieties are not compact, and there are several possible ways to compactify them.
By doing this, we add ``points at infinity'' to the configuration space, namely points that do not correspond to realizations.
These points are the bonds. 
Although they do not correspond to realizations, they still carry deep geometric information: 
by extracting it, we will be able to explain the rules we stated in Section~\ref{classification}.

We now describe the notion of configuration space and its compactification for realizations of graphs on the sphere, as it is introduced in~\cite{Gallet2019}.
This is accomplished by noticing that it is possible to associate 
to each general $n$-tuple of points in~$S_{\C}^2$ a $2n$-tuple of points in~$\p_{\C}^1$ 
in such a way that two $n$-tuples on the sphere differ by a complex rotation (namely, by an element in~$\mathrm{SO}_3(\C)$) if and only if 
the corresponding two $2n$-tuples in~$\p_{\C}^1$ are $\p\mathrm{GL}(2, \C)$-equivalent. 
The association works as follows: 
consider $S_{\C}^2$ as the affine part of a smooth quadric in~$\p^3_{\C}$, which is covered by two families of lines;
given a point $O \in S_{\C}^2$, we can consider the two projective lines in~$S_{\C}^2$ passing through~$O$;
each of these two lines intersects the plane at infinity in a single point; 
the two points that we obtain are called the \emph{left} and \emph{right lift} of~$O$, respectively.
The left and right lift belong to the intersection of the projective closure of~$S_{\C}^2$ with the plane at infinity, 
which is a smooth plane conic, hence isomorphic to~$\p^1_{\C}$. 
This means that we can consider general realizations on the complex unit sphere, up to complex rotations, 
as points in the moduli space~$\mscr{M}_{0,2n}$ of $2n$ distinct points on the projective line. 
Moreover, one notices that constraints in terms of spherical distances on~$S_{\C}^2$ can be translated 
into relations among the lifts in~$\p_{\C}^1$ in terms of their cross-ratios. 
Therefore, one can encode realizations of graphs on the sphere compatible with a given labeling by algebraic subvarieties of~$\mscr{M}_{0,2n}$. 
This moduli space is non-compact, and a possible (projective) compactification is provided by the so-called 
\emph{moduli space of rational stable curves with marked points}, introduced by Knudsen and Mumford, and denoted~$\M_{0,2n}$. 
In this way, it is possible to assign to each graph~$G = (V, E)$, together with a labeling $\lambda \colon E \longrightarrow \R$,
a projective variety~$C_G$ inside $\M_{0,2|V|}$ whose intersection with $\mscr{M}_{0,2|V|}$ encodes the realizations of~$G$ in~$S_{\C}^2$ compatible with~$\lambda$, up to rotations.

Given this premise, we can define the notion of bond of motion of a graph.

\begin{definition}
\label{definition:bond}
 Given a graph~$G = (V, E)$ and a labeling~$\lambda \colon E \longrightarrow \R$,
 the projective variety~$C_G \subseteq \M_{0, 2|V|}$ is called the \emph{configuration space} of realizations of~$G$ in~$S_{\C}^2$ compatible with~$\lambda$. 
 Since the labeling~$\lambda$ takes real values, the variety~$C_{G}$ is real as well.
 The components of~$C_{G}$ that intersect $\mscr{M}_{0, 2|V|}$ nontrivially are called \emph{motions} of~$G$.
 The points in $C_G \cap (\M_{0, 2|V|} \setminus \mscr{M}_{0, 2|V|})$ are called the \emph{bonds} of~$G$,
 and if $K \subseteq C_G$ is a motion, bonds of~$G$ that lie in~$K$ are called bonds of~$K$.
 Since $C_G$ is a real variety and there are no real points on $\M_{0, 2|V|} \setminus \mscr{M}_{0, 2|V|}$, 
 bonds come in complex conjugate pairs.
\end{definition}

The following is one of the main results of~\cite{Gallet2019}.

\begin{proposition}
 A graph~$G$ with a flexible labeling~$\lambda$ on~$S^2_{\C}$ admits at least a bond. 
\end{proposition}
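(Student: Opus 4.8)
The plan is to argue by a completeness-versus-affineness dichotomy inside the Knudsen--Mumford compactification $\M_{0,2|V|}$. First I would unwind the definitions: because $\lambda$ is a flexible labeling, there are infinitely many pairwise non-congruent realizations of $G$ on $S^2_{\C}$ compatible with~$\lambda$, so after quotienting by complex rotations the set $C_G \cap \mscr{M}_{0,2|V|}$ is infinite. Since $C_G$ is a variety, it has only finitely many irreducible components, so at least one of them, call it~$K$, meets $\mscr{M}_{0,2|V|}$ in an infinite set; in particular $\dim K \geq 1$, and by Definition~\ref{definition:bond} such a~$K$ is a motion of~$G$.

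Next I would invoke the well-known fact that $\mscr{M}_{0,m}$ is an affine variety for every $m \geq 3$: normalizing three of the marked points to $0$, $1$, $\infty$ identifies $\mscr{M}_{0,m}$ with the complement of the hyperplane arrangement $\bigcup_i \{x_i = 0\} \cup \bigcup_i \{x_i = 1\} \cup \bigcup_{i<j} \{x_i = x_j\}$ inside $\A^{m-3}$, and the complement of a hypersurface in affine space is again affine. Hence the boundary $\partial := \M_{0,2|V|} \setminus \mscr{M}_{0,2|V|}$ is the complement, inside the \emph{projective} variety $\M_{0,2|V|}$, of an affine open subset. Now $K$, being a closed subset of $\M_{0,2|V|}$, is complete. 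If $K$ were contained in $\mscr{M}_{0,2|V|}$, then it would be a closed subvariety of an affine variety, hence itself affine; but a variety that is both affine and complete is finite, contradicting $\dim K \geq 1$. Therefore $K \not\subseteq \mscr{M}_{0,2|V|}$, i.e.\ $K \cap \partial \neq \emptyset$, and any point of $K \cap \partial$ is a bond of~$K$, and a fortiori a bond of~$G$. (Since $C_G$ is defined over~$\R$ while $\partial$ carries no real points, these bonds occur in complex conjugate pairs, so at least two in fact arise.)

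Granting the framework imported from~\cite{Gallet2019} --- the faithful encoding of spherical realizations up to rotation by $2|V|$-tuples of marked points on $\p^1_{\C}$, the translation of spherical-distance constraints into cross-ratio relations, and the consequent realization of $C_G$ as a genuine closed subvariety of $\M_{0,2|V|}$ --- the argument above is short. The step I expect to be the only real subtlety is the extraction of a single \emph{positive-dimensional} irreducible motion, as opposed to an infinite family of isolated points; this follows from the infinitude of compatible non-congruent realizations together with the Noetherianity of $C_G$. After that, the affineness of $\mscr{M}_{0,m}$ and the completeness of~$K$ do the rest.
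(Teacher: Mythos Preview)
Your argument is correct. The paper itself does not prove this proposition: it is stated as ``one of the main results of~\cite{Gallet2019}'' and simply imported from that reference, so there is no proof in the present paper to compare against. What you have supplied is precisely the standard argument one expects (and the one underlying the cited result): a motion~$K$ is a positive-dimensional closed subvariety of the projective variety~$\M_{0,2|V|}$, hence complete; the open stratum~$\mscr{M}_{0,2|V|}$ is affine; and a complete positive-dimensional variety cannot sit inside an affine variety, so $K$ must hit the boundary. Your handling of the only delicate point---extracting an irreducible component of~$C_G$ that is genuinely positive-dimensional from the mere infinitude of compatible realizations---is fine, and your explicit acknowledgement that the faithful passage from spherical realizations modulo~$\mathrm{SO}_3(\C)$ to points of~$\mscr{M}_{0,2|V|}$ is borrowed from~\cite{Gallet2019} is appropriate, since that is exactly the black box the paper itself invokes.
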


It is interesting to notice that also Connelly, in the introduction of~\cite{Connelly1978}, 
highlights the fact that extending the field to the complex numbers and ``going to infinity'' 
(as we do here with bonds) may help understanding the geometric properties of flexible objects.

\subsection*{Justification of the objects}

Now we are ready to explain why we introduced octahedral and pyramidal bonds in Section~\ref{classification}.

From Section~\ref{reduction} we know that an octahedron has a flexible labeling if and only if 
the induced labeling for the graph~$\GEdg$ is flexible on the sphere. 
This means that when we have a flexible octahedron, we get bonds for~$\GEdg$. 
The presence of bonds imposes combinatorial restrictions to graphs in terms of \emph{colorings}, which arise as follows.
Let us again consider an arbitrary graph $G = (V, E)$ on~$n$ vertices. 
The boundary $\M_{0, 2n} \setminus \mscr{M}_{0, 2n}$ is constituted of divisors (i.e., subvarieties of codimension~$1$) 
that are denoted by~$D_{I,J}$, where $(I,J)$ is a partition of the set $\{P_1, \dotsc, P_n, Q_1, \dotsc, Q_n\}$ 
of marked points --- we denote the marked points in this way to recall that we interpret them as left and right lifts of points on~$S_{\C}^2$. 
If the configuration curve~$C_G$ meets a divisor~$D_{I,J}$, then the partition~$(I,J)$ induces a coloring on the graph~$G$ as follows: 
an edge~$\{i,j\}$ of~$G$ is \emph{red} if at least three of $\{P_i, P_j, Q_i, Q_j\}$ belong to~$I$; it is \emph{blue} otherwise. 
Properties of the moduli space~$\M_{0,2n}$ imply that in each of these colorings there is no path of length~$3$ in which the colors are alternated. 
For this reason, these coloring are called \emph{NAP} (for Not Alternating Path) if they are surjective.
The main result about NAP-colorings in~\cite{Gallet2019} is that their presence characterizes flexibility on the sphere: 
a graph~$G$ admits a flexible labeling~$\lambda$ if and only if $G$ admits a NAP-coloring.

Let us now describe the NAP-colorings of the graph~$\GEdg$. 
We will see that these colorings are in bijection with quadrilaterals in~$\GOct$. 
To make the notation easier, from now on and for the rest of the paper 
we denote the marked points on the stable curves of~$\M_{0,24}$
not by $P_u, Q_u$ for $u \in \{1,\dots, 12\}$, but rather by $P_{ij}$, 
$P_{ji}$ for $\{i,j\} \in E_\oct$, with $i < j$,
since the vertices of~$\GEdg$ are labeled by pairs of indices.

\begin{definition}
\label{definition:induced_NAP}
 Each of the three quadrilaterals in~$\GOct$ determines a NAP-coloring 
of~$\GEdg$ as follows. Let $i,j,k,\ell$ be the vertices of the quadrilateral. 
There are exactly four vertices of~$\GEdg$ given by unordered pairs of 
elements in $\{i,j,k,\ell\}$. A direct inspection shows that those four 
vertices form a \emph{disconnecting set} for~$\GEdg$, namely if they are removed the 
resulting graph has two connected components. 
One then gets a NAP-coloring by coloring all the edges with their endpoints in the 
same component by the same color; see Figure~\ref{figure:NAP_colorings}. 
\end{definition}

\begin{figure}[ht]
	\centering
   \begin{tikzpicture}[scale=0.9]
		\node[vertex] (1) at (0.707107, 0.707107) {};
		\node[vertex] (2) at (-0.707107, -0.707107) {};
		\node[vertex] (3) at (0.707107, -0.707107) {};
		\node[vertex] (4) at (-0.707107, 0.707107) {};
		\node[vertex] (5) at (1.76777, 1.76777) {};
		\node[vertex] (6) at (-1.76777, -1.76777) {};
		\node[vertex] (7) at (1.76777, -1.76777) {};
		\node[vertex] (8) at (-1.76777, 1.76777) {};
		\node[vertex] (9) at (1.27279, 0.) {};
		\node[vertex] (10) at (0., 1.27279) {};
		\node[vertex] (11) at (0., -1.27279) {};
		\node[vertex] (12) at (-1.27279, 0.) {};
		\draw[bedge] (1)edge(3);
		\draw[bedge] (1)edge(4);
		\draw[bedge] (1)edge(9);
		\draw[bedge] (1)edge(10);
		\draw[bedge] (2)edge(3);
		\draw[bedge] (2)edge(4);
		\draw[bedge] (2)edge(11);
		\draw[bedge] (2)edge(12);
		\draw[bedge] (3)edge(9);
		\draw[bedge] (3)edge(11);
		\draw[bedge] (4)edge(10);
		\draw[bedge] (4)edge(12);
		\draw[redge] (5)edge(7);
		\draw[redge] (5)edge(8);
		\draw[redge] (5)edge(9);
		\draw[redge] (5)edge(10);
		\draw[redge] (6)edge(7);
		\draw[redge] (6)edge(8);
		\draw[redge] (6)edge(11);
		\draw[redge] (6)edge(12);
		\draw[redge] (7)edge(9);
		\draw[redge] (7)edge(11);
		\draw[redge] (8)edge(10);
		\draw[redge] (8)edge(12);
	\end{tikzpicture}
	\begin{tikzpicture}[scale=0.9]
		\node[vertex] (1) at (0.707107, 0.707107) {};
		\node[vertex] (2) at (-0.707107, -0.707107) {};
		\node[vertex] (3) at (0.707107, -0.707107) {};
		\node[vertex] (4) at (-0.707107, 0.707107) {};
		\node[vertex] (5) at (1.76777, 1.76777) {};
		\node[vertex] (6) at (-1.76777, -1.76777) {};
		\node[vertex] (7) at (1.76777, -1.76777) {};
		\node[vertex] (8) at (-1.76777, 1.76777) {};
		\node[vertex] (9) at (1.27279, 0.) {};
		\node[vertex] (10) at (0., 1.27279) {};
		\node[vertex] (11) at (0., -1.27279) {};
		\node[vertex] (12) at (-1.27279, 0.) {};
		\draw[bedge] (1)edge(3);
		\draw[bedge] (1)edge(4);
		\draw[bedge] (1)edge(9);
		\draw[bedge] (1)edge(10);
		\draw[redge] (2)edge(3);
		\draw[redge] (2)edge(4);
		\draw[redge] (2)edge(11);
		\draw[redge] (2)edge(12);
		\draw[bedge] (3)edge(9);
		\draw[redge] (3)edge(11);
		\draw[bedge] (4)edge(10);
		\draw[redge] (4)edge(12);
		\draw[bedge] (5)edge(7);
		\draw[bedge] (5)edge(8);
		\draw[bedge] (5)edge(9);
		\draw[bedge] (5)edge(10);
		\draw[redge] (6)edge(7);
		\draw[redge] (6)edge(8);
		\draw[redge] (6)edge(11);
		\draw[redge] (6)edge(12);
		\draw[bedge] (7)edge(9);
		\draw[redge] (7)edge(11);
		\draw[bedge] (8)edge(10);
		\draw[redge] (8)edge(12);
	\end{tikzpicture}
	\begin{tikzpicture}[scale=0.9]
		\node[vertex] (1) at (0.707107, 0.707107) {};
		\node[vertex] (2) at (-0.707107, -0.707107) {};
		\node[vertex] (3) at (0.707107, -0.707107) {};
		\node[vertex] (4) at (-0.707107, 0.707107) {};
		\node[vertex] (5) at (1.76777, 1.76777) {};
		\node[vertex] (6) at (-1.76777, -1.76777) {};
		\node[vertex] (7) at (1.76777, -1.76777) {};
		\node[vertex] (8) at (-1.76777, 1.76777) {};
		\node[vertex] (9) at (1.27279, 0.) {};
		\node[vertex] (10) at (0., 1.27279) {};
		\node[vertex] (11) at (0., -1.27279) {};
		\node[vertex] (12) at (-1.27279, 0.) {};
		\draw[bedge] (1)edge(3);
		\draw[redge] (1)edge(4);
		\draw[bedge] (1)edge(9);
		\draw[redge] (1)edge(10);
		\draw[bedge] (2)edge(3);
		\draw[redge] (2)edge(4);
		\draw[bedge] (2)edge(11);
		\draw[redge] (2)edge(12);
		\draw[bedge] (3)edge(9);
		\draw[bedge] (3)edge(11);
		\draw[redge] (4)edge(10);
		\draw[redge] (4)edge(12);
		\draw[bedge] (5)edge(7);
		\draw[redge] (5)edge(8);
		\draw[bedge] (5)edge(9);
		\draw[redge] (5)edge(10);
		\draw[bedge] (6)edge(7);
		\draw[redge] (6)edge(8);
		\draw[bedge] (6)edge(11);
		\draw[redge] (6)edge(12);
		\draw[bedge] (7)edge(9);
		\draw[bedge] (7)edge(11);
		\draw[redge] (8)edge(10);
		\draw[redge] (8)edge(12);
	\end{tikzpicture}
 \caption{The three NAP-colorings of~$\GEdg$ induced by the three quadrilaterals in~$\GOct$. }
 \label{figure:NAP_colorings}
\end{figure}

By sorting out all the cases, helped by the fact that there are several triangles in~$\GEdg$, which must be monochromatic in a NAP-coloring, 
one proves the following result.

\begin{proposition}
 The only NAP-colorings of~$\GEdg$ are those induced by the three quadrilaterals in~$\GOct$ as in Definition~\ref{definition:induced_NAP}.
\end{proposition}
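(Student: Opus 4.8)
The plan is to reduce the claim to an elementary statement about $2$-colorings of the vertices of the $3$-cube. First I would invoke the fact, recalled just before the statement, that in every NAP-coloring of~$\GEdg$ each triangle is monochromatic. Each edge of~$\GEdg$ joins two octahedron edges lying in a unique common octahedron face, and it belongs to the triangle of~$\GEdg$ attached to that face; hence a NAP-coloring, being constant on triangles, descends to a coloring~$c$ of the eight octahedron faces. I would then fix the bookkeeping by identifying the eight faces with the vertices of the cube~$Q_3=\{0,1\}^3$, one coordinate per pair of opposite octahedron vertices, $A=\{1,2\}$, $B=\{3,4\}$, $C=\{5,6\}$; under this identification $\GEdg$ becomes the line graph of~$Q_3$, and $c$ is a $2$-coloring of~$V(Q_3)$ which is surjective because the original NAP-coloring is.

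Next I would translate ``no alternating path of length~$3$'' into a condition on~$c$. Fix an edge $w_1w_2$ of~$\GEdg$: its endpoints $w_1$ and~$w_2$ are octahedron edges sharing an octahedron vertex~$b$ and lying in a common face~$f$, and the edge carries the color~$c(f)$. Going through the possible choices of the outer vertices of a $3$-path with $w_1w_2$ in the middle, one sees that an alternating such path exists precisely when each of $w_1$ and~$w_2$ has its two faces colored differently by~$c$; in cube coordinates, with $f$ regarded as a vertex~$v$ of~$Q_3$ and $X,Y$ the two coordinates other than the one indexing~$b$'s pair, this says $c(v)\neq c(v\oplus e_X)$ and $c(v)\neq c(v\oplus e_Y)$. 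As the middle edge runs over all edges of~$\GEdg$, the pair $(v,\{X,Y\})$ runs over all pairs of a cube vertex with a pair of coordinates, so the NAP condition is exactly: every vertex of~$Q_3$ has the same color as at least two of its three neighbors; equivalently, the edges of~$Q_3$ joining differently colored vertices form a matching.

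It then remains to determine the surjective $2$-colorings of~$V(Q_3)$ whose heterochromatic edges form a matching. These edges are the edge cut of~$Q_3$ given by the bipartition into color classes, and a short finite check on the size of a color class — classes of size $1$, $2$, $3$ always produce a cut with a vertex of degree $\geq 2$; among classes of size $4$ only those of the form $\{v : v_i=0\}$ yield a matching cut; classes of size $\geq 5$ are complements of these — shows that, up to exchanging the colors, $c$ is one of the three ``coordinate'' colorings. Finally I would match these with Definition~\ref{definition:induced_NAP}: the coordinate coloring for the pair~$A$ colors a face according to which of the vertices~$1$, $2$ it contains, and this coincides with the NAP-coloring attached to the quadrilateral~$12$, whose disconnecting set consists of the vertices of~$\GEdg$ contained in~$\{3,4,5,6\}$ and whose removal splits $\GEdg$ into the two $4$-cycles around the octahedron vertices~$1$ and~$2$; the same holds for~$34$ and~$56$. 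This shows that the three colorings of Definition~\ref{definition:induced_NAP} are all the NAP-colorings of~$\GEdg$.

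The step that I expect to need the most care is the translation in the second paragraph: with monochromatic triangles in hand, one must verify that an alternating \emph{walk} of length~$3$ through a prescribed middle edge can always be completed to an alternating \emph{path} (the outer vertices that the analysis produces are automatically distinct), so that the non-existence of alternating $3$-paths really is equivalent to the cube condition above. Granting that, the remaining cube classification and the identification with the quadrilateral colorings are entirely routine.
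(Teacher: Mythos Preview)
Your argument is correct. The identification of $\GEdg$ with the line graph of the $3$-cube via octahedron/cube duality is sound, the translation of the NAP condition into ``heterochromatic cube edges form a matching'' is accurate (and your distinctness check for the outer vertices $w_0,w_3$ does go through: in $Q_3$ the vertices $v\oplus e_X$ and $v\oplus e_Y$ are non-adjacent, so no collision can occur), and the classification of matching cuts of $Q_3$ is the standard one, yielding exactly the three coordinate colorings.

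By way of comparison, the paper does not actually write out a proof: it merely asserts that ``by sorting out all the cases, helped by the fact that there are several triangles in~$\GEdg$, which must be monochromatic in a NAP-coloring, one proves'' the result. Your proof makes that case analysis systematic by passing to the cube: once the coloring has been pushed down to the eight faces, the NAP condition becomes a purely local condition on vertex colorings of~$Q_3$, and the enumeration collapses to the small matching-cut classification. This is more transparent than a direct edge-by-edge check in~$\GEdg$, and it explains structurally why exactly three colorings survive (one per coordinate direction, i.e., one per pair of opposite octahedron vertices). The only place to be slightly careful in writing it up is the matching of the coordinate colorings with Definition~\ref{definition:induced_NAP}: the definition as stated colors edges by the component of their endpoints after deleting the disconnecting set, so one should note that edges incident to a disconnecting vertex also receive the color of the unique face they represent, which is exactly what your face-coloring does.
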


\begin{remark}
\label{remark:vital_NAP}
 There are $16$  divisors $D_{I,J}$ inducing the same NAP-coloring. 
 For example, if we consider the quadrilateral $\{1,2,3,4\}$, then one of these divisors is given by
 \[
  I = \{ P_{5 \ast}, P_{\ast 5}, P_{13}, P_{23}, P_{14}, P_{24} \}
  \quad
  J = \{ P_{6 \ast}, P_{\ast 6}, P_{31}, P_{32}, P_{41}, P_{42} \},
 \]
 where $\ast$ takes all the values in $\{1,2,3,4\}$.
 The other divisors are obtained by swapping the pairs $(P_{13}, P_{31})$, $(P_{23},P_{32})$, $(P_{14}, P_{41})$, and $(P_{24}, P_{42})$.
 Hence we get a total of $48 = 16 \times 3$ divisors that can be intersected by the configuration space in~$\M_{0,24}$ of a flexible octahedron.
\end{remark}

By examining the shape of the partitions $(I,J)$, we get the following graphical description of the divisors~$D_{I,J}$.

\begin{proposition}
\label{proposition:divisor_orientation}
Let $\{m,n\}$ be any of $\{1,2\}$, $\{3,4\}$, $\{5,6\}$ and let $\mcal{Q}$ be the quadrilateral in~$\GOct$ with vertices $\{1, \dotsc, 6\} \setminus \{m, n\}$. 
There is a bijection between the divisors~$D_{I,J}$ inducing the NAP-coloring determined by $\{1, \dotsc, 6\} \setminus \{m,n\}$ 
and the set of orientations of the edges of~$\mcal{Q}$. 
The bijection works as follows.
Write $I = \{ P_{m \ast}, P_{\ast m}, P_{t_1 \, s_1}, \dotsc, P_{t_4 \, s_4}\}$, then $\{t_1, s_1\}, \dotsc, \{t_4, s_4\}$ 
are the edges of the (undirected) $4$-cycle~$\mcal{Q}$.
We then declare that $D_{I,J}$ determines the orientations $(t_1, s_1), \dotsc, (t_4, s_4)$ of the edges of~$\mcal{Q}$; 
see Figure~\ref{figure:bond_orientation} for the quadrilateral corresponding to the example in Remark~\ref{remark:vital_NAP}.
\end{proposition}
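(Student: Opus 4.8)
The plan is to reduce to one quadrilateral by the symmetry already present in the setup, invoke the explicit list of divisors from Remark~\ref{remark:vital_NAP}, and then read the bijection off directly.

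First I would use the cyclic equivariance under the permutation $(1,4,5,2,3,6)$ recorded in Definition~\ref{definition:directions}: this permutation cyclically permutes the three non-edges $\{1,2\}$, $\{3,4\}$, $\{5,6\}$ of~$\GOct$, hence the three quadrilaterals, and it carries the labelled marked points of $\overline{\mscr{M}}_{0,24}$ and the divisors $D_{I,J}$ along with it. It therefore suffices to treat $\{m,n\}=\{5,6\}$, so that $\mcal{Q}$ is the $4$-cycle $1-3-2-4-1$ on $\{1,2,3,4\}$, and then to transport the resulting bijection to the other two cases. Note that the four vertices of~$\GEdg$ that are unordered pairs inside $\{1,2,3,4\}$ are, by the description of quadrilaterals of~$\GOct$, exactly the four edges $\{1,3\},\{3,2\},\{2,4\},\{4,1\}$ of~$\mcal{Q}$.

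Next I would apply Remark~\ref{remark:vital_NAP}: the divisors $D_{I,J}$ inducing the NAP-coloring attached to~$\mcal{Q}$ are precisely the sixteen partitions obtained from the base one, $I_0=\{P_{5\ast},P_{\ast 5},P_{13},P_{23},P_{14},P_{24}\}$, by independently choosing, for each of the four pairs $\{P_{13},P_{31}\}$, $\{P_{23},P_{32}\}$, $\{P_{14},P_{41}\}$, $\{P_{24},P_{42}\}$, which element is placed in the part containing $P_{5\ast}$. Writing $I$ for the part containing $P_{m\ast}$, one thus always has $P_{m\ast},P_{\ast m}\subseteq I$ and $P_{n\ast},P_{\ast n}\subseteq J$, while the only remaining freedom is which of the two marked points $P_{ab},P_{ba}$ of the vertex $\{a,b\}$ lies in~$I$, for each of the four vertices of~$\GEdg$ that are edges of~$\mcal{Q}$. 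In particular $I=\{P_{m\ast},P_{\ast m},P_{t_1s_1},\dots,P_{t_4s_4}\}$ with $\{t_1,s_1\},\dots,\{t_4,s_4\}$ ranging over the edges of~$\mcal{Q}$, exactly as in the statement.

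This already produces the bijection together with its inverse. To a divisor $D_{I,J}$ one associates the orientation of~$\mcal{Q}$ that directs the edge $\{t_k,s_k\}$ as $(t_k,s_k)$, where $P_{t_ks_k}\in I$; this is well defined precisely by the observation of the previous paragraph that exactly one of $P_{t_ks_k},P_{s_kt_k}$ lies in~$I$. Conversely, an orientation of the four edges of~$\mcal{Q}$ prescribes for each edge which of its two marked points to put into~$I$; adjoining $P_{m\ast},P_{\ast m}$ to~$I$ and everything else to~$J$ yields one of the sixteen partitions of Remark~\ref{remark:vital_NAP}, hence a divisor inducing the NAP-coloring. The two assignments are mutually inverse, since on both sides the relevant object is determined by the ``one marked point per edge of~$\mcal{Q}$'' datum, and this datum is literally the orientation. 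Finally, $I_0$ corresponds under this bijection to the orientations $1\to 3,\ 1\to 4,\ 2\to 3,\ 2\to 4$, which is the configuration drawn in Figure~\ref{figure:bond_orientation}; pushing everything forward by the cyclic permutation $(1,4,5,2,3,6)$ gives the statement for $\{m,n\}=\{1,2\}$ and $\{3,4\}$.

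The main obstacle is only bookkeeping: one must keep straight that the ``free'' marked-point pairs are indexed by the edges of~$\mcal{Q}$ (and not by some larger set of vertices of~$\GEdg$), and one must fix once and for all the convention that $I$ is the part containing $P_{m\ast}$ — choosing $n$ instead of $m$ reverses all four orientations globally but does not affect the existence of the bijection. Beyond Remark~\ref{remark:vital_NAP} there is no moduli-theoretic input required.
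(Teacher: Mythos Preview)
Your proposal is correct and matches the paper's own treatment: the paper does not give a separate proof but simply states that the proposition follows ``by examining the shape of the partitions~$(I,J)$,'' i.e., it is read off directly from Remark~\ref{remark:vital_NAP}. Your write-up makes this explicit (the cyclic reduction, the identification of the free pairs with edges of~$\mcal{Q}$, and the mutually inverse assignments), which is exactly the intended argument spelled out in detail.
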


Hence Proposition~\ref{proposition:divisor_orientation} explains why we defined octahedral bonds in Section~\ref{classification} in that way: 
oriented quadrilaterals of~$\GOct$ codify the divisors~$D_{I,J}$ that may be intersected by the configuration curve~$C_{\GEdg}$ of flexible labeling of~$\GEdg$. 
The $\mu$-number of an octahedral bond reports the sum of the intersection multiplicities between~$C_{\GEdg}$ and a divisor~$D_{I,J}$,
and is defined as the degree of the divisor on~$C_{\GEdg}$ cut out by~$D_{I,J}$.

To explain the origin of pyramidal bonds, notice that 
if we apply the reduction from the space to sphere described in Section~\ref{reduction} to a pyramid \quadrefV,
taking into account only the edges incident to~$v$,
we obtain a $4$-cycle on the sphere. 
If we take as graph~$G$ a $4$-cycle, whose vertices are $\{1,2,3,4\}$ and whose edges are $\bigl\{ \{1,2\}, \{2,3\}, \{3,4\}, \{1,4\} \bigr\}$,
then the moduli space where its configuration space lives is~$\M_{0,8}$. 
Let us, for a moment, switch back to the notation $P_u$, $Q_u$ for the marked points of stable curves,
just to make the notation in this particular case less heavy.
There are four divisors~$D_{I,J}$ in~$\M_{0,8}$ given by the partitions
\begin{align*}
 I &= \{ P_1, Q_1, P_2, P_4 \}, & J &= \{ P_3, Q_3, Q_2, Q_4 \}, \\
 I &= \{ P_2, Q_2, P_1, P_3 \}, & J &= \{ P_4, Q_4, Q_1, Q_3 \}, \\
 I &= \{ P_1, Q_1, P_2, Q_4 \}, & J &= \{ P_3, Q_3, Q_2, P_4 \}, \\
 I &= \{ P_2, Q_2, P_1, Q_3 \}, & J &= \{ P_4, Q_4, Q_1, P_3 \}.
\end{align*}
By swapping the $P$'s with the $Q$'s in the previous partitions, we obtain four other divisors, which are the complex conjugates of the previous ones. 
Let us focus on the $I$-part of the partition: we see that we always have a pair $P_k$, $Q_k$. 
If $k$ is even we say that the divisor is \emph{even} (e), while we say that it is \emph{odd} (o) if $k$ is \emph{odd}.
Moreover, we see that in the $I$-part we have another pair of marked points of the form either~$(P_i, P_j)$ or~$(P_i, Q_j)$.
In the first case we say that the divisor is \emph{unmixed} (u), while in the second case we say that the divisor is \emph{mixed} (m). 
Hence, to specify one of these four divisors it is enough to specify whether it is even or odd, and unmixed or mixed.
Therefore, we denote these divisors by~$\divom{}$, $\divou{}$, $\divem{}$, and~$\diveu{}$.

Now we can go back to our usual notation for marked points, and discuss the situation for all pyramids in the octahedron.
Recall that in Figure~\ref{figure:even_odd_pyramids} we fixed the convention about even and odd edges of the six pyramids of the octahedron, 
which correspond to the six quadrilaterals in~$\GEdg$. 
This convention is summarized in Figure~\ref{figure:even_odd}; 
one can notice that it is equivariant with respect to cyclic permutations of the vertices~$(1,4,5,2,3,6)$ of the octahedron.
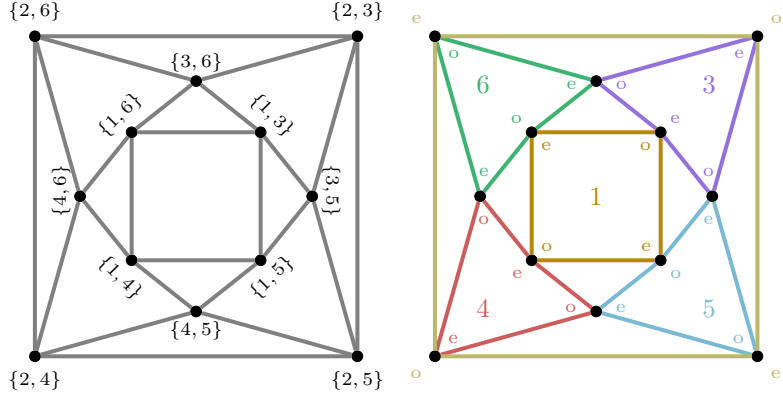
\begin{figure}[ht]
	\centering
	\begin{tikzpicture}[scale=1.2,baseline={(0,0)}]
		\node[vertex,label={[labelsty,rotate=-45,label distance=-1pt]above:$\{1,3\}$}] (1) at (0.707107, 0.707107) {};
		\node[vertex,label={[labelsty,rotate=-45,label distance=-1pt]below:$\{1,4\} $}] (2) at (-0.707107, -0.707107) {};
		\node[vertex,label={[labelsty,rotate=45,label distance=-1pt]below:$\{1,5\}$}] (3) at (0.707107, -0.707107) {};
		\node[vertex,label={[labelsty,rotate=45,label distance=-1pt]above:$\{1,6\}$}] (4) at (-0.707107, 0.707107) {};
		\node[vertex,label={[labelsty]above:$\{2,3\}$}] (5) at (1.76777, 1.76777) {};
		\node[vertex,label={[labelsty]below:$\{2,4\}$}] (6) at (-1.76777, -1.76777) {};
		\node[vertex,label={[labelsty]below:$\{2,5\}$}] (7) at (1.76777, -1.76777) {};
		\node[vertex,label={[labelsty]above:$\{2,6\}$}] (8) at (-1.76777, 1.76777) {};
		\node[vertex,label={[labelsty,rotate=-90]above:$\{3,5\}$}] (9) at (1.27279, 0.) {};
		\node[vertex,label={[labelsty,label distance=-2pt]above:$\{3,6\}$}] (10) at (0., 1.27279) {};
		\node[vertex,label={[labelsty,label distance=-2pt]below:$\{4,5\}$}] (11) at (0., -1.27279) {};
		\node[vertex,label={[labelsty,rotate=90]above:$\{4,6\}$}] (12) at (-1.27279, 0.) {};
		\draw[edge] (1)edge(3) (1)edge(4) (1)edge(9) (1)edge(10) (2)edge(3) (2)edge(4) (2)edge(11) (2)edge(12) (3)edge(9) (3)edge(11) (4)edge(10) (4)edge(12) (5)edge(7) (5)edge(8) (5)edge(9) (5)edge(10) (6)edge(7) (6)edge(8) (6)edge(11) (6)edge(12) (7)edge(9) (7)edge(11) (8)edge(10) (8)edge(12);
	\end{tikzpicture}
	\begin{tikzpicture}[scale=1.2,baseline={(0,0)}]
		\node[vertex,label={[labelsty,label distance=-2pt,col3]45:e},label={[labelsty,label distance=-2pt,col1]-135:o}] (1) at (0.707107, 0.707107) {};
		\node[vertex,label={[labelsty,label distance=-2pt,col4]-135:e},label={[labelsty,label distance=-2pt,col1]45:o}] (2) at (-0.707107, -0.707107) {};
		\node[vertex,label={[labelsty,label distance=-2pt,col5]-45:o},label={[labelsty,label distance=-2pt,col1]135:e}] (3) at (0.707107, -0.707107) {};
		\node[vertex,label={[labelsty,label distance=-2pt,col6]135:o},label={[labelsty,label distance=-2pt,col1]-45:e}] (4) at (-0.707107, 0.707107) {};
		\node[vertex,label={[labelsty,col3]-135:e},label={[labelsty,col2]45:o}] (5) at (1.76777, 1.76777) {};
		\node[vertex,label={[labelsty,col4]45:e},label={[labelsty,col2]-135:o}] (6) at (-1.76777, -1.76777) {};
		\node[vertex,label={[labelsty,col5]135:o},label={[labelsty,col2]-45:e}] (7) at (1.76777, -1.76777) {};
		\node[vertex,label={[labelsty,col6]-45:o},label={[labelsty,col2]135:e}] (8) at (-1.76777, 1.76777) {};
		\node[vertex,label={[labelsty,label distance=2pt,above,col3]110:o},label={[labelsty,label distance=2pt,below,col5]-110:e}] (9) at (1.27279, 0.) {};
		\node[vertex,label={[labelsty,label distance=2pt,right,col3]-20:o},label={[labelsty,label distance=2pt,left,col6]200:e}] (10) at (0., 1.27279) {};
		\node[vertex,label={[labelsty,label distance=2pt,left,col4]-200:o},label={[labelsty,label distance=2pt,right,col5]20:e}] (11) at (0., -1.27279) {};
		\node[vertex,label={[labelsty,label distance=2pt,below,col4]-70:o},label={[labelsty,label distance=2pt,above,col6]70:e}] (12) at (-1.27279, 0.) {};
		\draw[edge,col1] (1)edge(3) (1)edge(4)  (2)edge(3) (2)edge(4); % inner
		\draw[edge,col4]  (2)edge(11) (2)edge(12) (6)edge(11) (6)edge(12); % lower left
		\draw[edge,col5] (3)edge(9) (3)edge(11)  (7)edge(9) (7)edge(11); % lower right
		\draw[edge,col3] (1)edge(9) (1)edge(10) (5)edge(9) (5)edge(10); % upper right
		\draw[edge,col6]  (4)edge(10) (4)edge(12) (8)edge(10) (8)edge(12); % upper left
		\draw[edge,col2] (5)edge(7) (5)edge(8)  (6)edge(7) (6)edge(8); % outer
		\node[col1] at (0,0) {1};
		\node[col3] at ($1/2*(1)+1/2*(5)$) {3};
		\node[col4] at ($1/2*(2)+1/2*(6)$) {4};
		\node[col5] at ($1/2*(3)+1/2*(7)$) {5};
		\node[col6] at ($1/2*(4)+1/2*(8)$) {6};
	\end{tikzpicture}
 \caption{Definitions of even and odd vertices for each of the six quadrilaterals in~$\GEdg$.}
 \label{figure:even_odd}
\end{figure}
With these choices, we see for example that if we consider the pyramid~\quadrefA, 
then the odd mixed divisor~$\divom{1}$ is given by the following partition:
\[
 I = (P_{14}, P_{41}, P_{15}, P_{61}), 
 \qquad
 J = (P_{13}, P_{31}, P_{51}, P_{16}).
\]
The notion of pyramidal bond then codifies the information contained in the $I$-part of the partition 
determined by one of the four divisors associated with a pyramid in the following way. 
As we saw, the $I$-part of such a partition associated to a pyramid~\quadrefV\ is of the form:
\[
 I = 
 \Bigl(
 P_{va}, P_{av},
 \begin{array}{c}
  P_{vu} \\
  \text{\small or} \\
  P_{uv}
 \end{array} ,
 \begin{array}{c}
  P_{vw} \\
  \text{\small or} \\
  P_{wv}
 \end{array}
 \Bigr)
\]
If $b$ is the vertex such that $\{a, b\}$ is a non-edge of~$\GOct$, 
then the pyramid~\quadrefV\ is the one induced by the vertices $v, a, b, u, w$.
The two oriented edges of this subgraph, forming the pyramidal bond, are then $(v,u)$ (or $(u,v)$) and $(v,w)$ (or $(w,v)$).
For example, the pyramidal bond associated to the divisor~$\divom{1}$ is~$\Pyr_{\Pbonddl}$.

\subsection*{Justification of Rule~\ref{rule:table}}

This rule summarizes the content of~\cite[Section~4.1]{Gallet2019}.
In fact, flexible pyramids determine flexible quadrilaterals on the sphere, 
and the reference describes their behavior, 
concerning in particular the intersection of their motions with the divisors in~$\M_{0,8}$.

\subsection*{Justification of Rule~\ref{rule:lengths}}

We want to obtain necessary conditions for the edge lengths of a flexible octahedron.
Notice that Mikhal\"{e}v in \cite{Mikhalev2001} obtains the same conditions for any suspensions whose equator is a cycle\footnote{A \emph{suspension} is a polyhedron whose combinatorial structure is the one of a double pyramid.}. 
The first author who discussed these relations for octahedra was, to our knowledge, Lebesgue.

Let us suppose, for simplicity, that a motion of a flexible octahedron meets the divisor~$D_{I,J}$ in~$\M_{0,24}$ described in Remark~\ref{remark:vital_NAP}. 
Let $q_{\{i,j\}}$ be the point in the sphere~$S^2_{\C}$ determined by the edge~$\{i,j\}$ in~$\GOct$ as described in Section~\ref{reduction}. 
Let us first clarify the relation between $q_{\{i,j\}}$ and the two marked points~$P_{ij}$ and~$P_{ji}$ 
corresponding to it in the stable curves with marked points of~$C_{\GEdg}$. 
When the marked points belong to a stable curve that is not in the boundary of~$\M_{0,24}$, 
we can recover the coordinates of~$q_{\{i,j\}}$ from the ones of~$P_{ij}$ and of~$P_{ji}$. 
Let us suppose that $P_{ij} = (u_{ij}: v_{ij})$ and $P_{ji} = (u_{ji}: v_{ji})$ (here we think about them as points in~$\p^1_{\C}$). 
The point~$q_{\{i,j\}}$ is essentially the image of $(P_{ij}, P_{ji})$ under the Segre embedding $\p^1 \times \p^1 \longrightarrow \p^3$. 
We need to be a little cautious here, since we should not use the ``standard'' map
\[
 (u_{ij}: v_{ij}), (u_{ji}: v_{ji}) 
 \mapsto 
 (u_{ij} \, u_{ji} : u_{ij} \, v_{ji} : v_{ij} \, u_{ji} : v_{ij} \, v_{ji} )
\]
but rather
\[
 (u_{ij}: v_{ij}), (u_{ji}: v_{ji}) 
 \mapsto 
 (u_{ij} \, u_{ji} : v_{ij} \, v_{ji} : u_{ij} \, v_{ji} + v_{ij} \, u_{ji} : u_{ij} \, v_{ji} - v_{ij} \, u_{ji}).
\]
In fact, our choice of coordinates should be such that the points where $P_{ij} = P_{ji}$ correspond to the plane at infinity 
(this justifies the choice of the last coordinates), 
and moreover the origin should be the polar of the plane at infinity with respect to the polarity induced by the quadric that is the image of~$\p^1 \times \p^1$. 
Hence we get the following expression for the vector~$q_{\{i,j\}}$:
\begin{equation}
\label{equation:parametrization_q}
 q_{\{i,j\}} = 
 \left(
 \frac{u_{ij} \, u_{ji}}{u_{ij} \, v_{ji} - v_{ij} \, u_{ji}},
 \frac{v_{ij} \, v_{ji}}{u_{ij} \, v_{ji} - v_{ij} \, u_{ji}},
 \frac{u_{ij} \, v_{ji} + v_{ij} \, u_{ji}}{u_{ij} \, v_{ji} - v_{ij} \, u_{ji}}
 \right) \,.
\end{equation}
Since the quadrilateral $\{1,2,3,4\}$ in~$\GOct$ forms a closed loop, we get the following condition, 
where $\ell_{ij}$ is the (signed) length of the edge~$\{i,j\}$:
\begin{equation}
\label{equation:condition_quadrilateral}
 \ell_{13} \, q_{\{1,3\}} + \ell_{23} \, q_{\{2,3\}} + \ell_{24} \, q_{\{2,4\}} + \ell_{14} \, q_{\{1,4\}} = 0 \,.
\end{equation}
Our goal is to express the condition of~Equation~\eqref{equation:condition_quadrilateral} in local coordinates of the moduli space~$M_{0,8}$ 
obtained by forgetting all marked points of the form $P_{5,\ast}$, $P_{\ast, 5}$, $P_{6,\ast}$, and $P_{\ast,6}$,
where $\ast$ takes all the values in $\{1,2,3,4\}$. 
Once we have done that, we can restrict the equation to the (projection of the) divisor~$D_{I,J}$ 
and obtain a necessary condition on the numbers~$\ell_{ij}$. We make the following choice of local coordinates for~$\M_{0,8}$:
\begin{align*}
 P_{13} &= (1:0), & P_{31} &= (0:1), & P_{14} &= (x_1:1), & P_{41} &= (z: x_2), \\
 P_{23} &= (1:1), & P_{32} &= (z:1), & P_{24} &= (x_3:1), & P_{42} &= (z: x_4).
\end{align*}
Notice that, with this choice of coordinates, $\{z = 0\}$ is a local equation for the projection of the divisor~$D_{I,J}$ on $M_{0,8}$. 
By using this choice of coordinates in Equation~\eqref{equation:parametrization_q} 
and by substituting the expressions for the $\{ q_{\{i,j\}} \}$ in Equation~\eqref{equation:condition_quadrilateral}, 
we get three equations given by rational functions in~$z, x_1, \dotsc, x_4$ and the lengths~$\ell_{13}, \dotsc, \ell_{14}$. 
Cleaning the denominators and saturating by them the obtained polynomial equations yields equations 
that can be restricted to the projection of the divisor~$D_{I,J}$ by imposing $z = 0$. 
Once we eliminate the variables\footnote{This and the previous operations can be performed by a computer algebra system such as Maple, Mathematica, or Sage.} $z, x_1, \dotsc, x_4$, we are left with a single equation, namely
\[
 \ell_{13} + \ell_{23} + \ell_{24} + \ell_{14} = 0.
\]
This equation is precisely the one prescribed by Rule~\ref{rule:lengths}.

\subsection*{Justification of Rule~\ref{rule:conjugates}}

This rule follows from the fact that the configuration curve~$C_{\GEdg}$ is a real variety, 
and so the degree of the divisor cut out on~$C_{\GEdg}$ by a divisor~$D_{I,J}$ 
equals the degree of the divisor cut out on~$C_{\GEdg}$ by its conjugate~$\overline{D}_{I,J}$. 
One then notices that complex conjugation interchanges the marked points $P_{ij}$ and $P_{ji}$, 
and so $\overline{D}_{I,J}$ determines the same quadrilateral of~$D_{I,J}$ but with opposite orientation.

\subsection*{Justification of Rule~\ref{rule:equations}}

Fix a motion $\mathfrak{K} \subseteq C_{\GEdg}$ and a pyramid~\quadrefV.
The pyramid~\quadrefV\ defines a quadrilateral~$\mcal{Q}$ in~$\GEdg$.
By assumption, we know that the pyramid~\quadrefV\ is simple.
Let \mbox{$\pi_{\mcal{Q}} \colon \M_{0, 24} \longrightarrow \M_{0,8}$} be the projection 
that forgets all marked points except the ones related to~$\mcal{Q}$.
The fact that the pyramid~\quadrefV\ is simple implies that the restriction $\pi_{\mcal{Q}}|_{\mathfrak{K}}$ is birational.
As recalled in Section~\ref{reduction}, there are $4$ divisors (together with their complex conjugates)
that are relevant for us, namely $\{ \divom{v}, \divou{v}, \divem{v}, \diveu{v} \}$.
For each of them, we can use the following elementary fact from algebraic geometry: 
if $f \colon X \longrightarrow Y$ is a birational morphism between projective curves, 
and $E$ is a divisor on~$Y$, then the degree of~$E$ equals the degree of the pullback of~$E$ via~$f$.
By applying this fact to each of the four divisors, we get the following equations:
\[
 \sum_{\pi_{\mcal{Q}}(D_{I,J}) = D^{v}} \deg (D_{I,J}|_{\mathfrak{K}}) = \deg (D^{v}|_{\pi_{\mcal{Q}}(\mathfrak{K})}) ,
\]
for each $D^{v} \in \{ \divom{v}, \divou{v}, \divem{v}, \diveu{v} \}$.
Thus we obtain equations linking sums of $\mu$-numbers of octahedral bonds to $\mu$-numbers of pyramidal bonds.
Because of the choice in the notation we made so far, 
which is equivariant under cyclic permutations of the indices $(1, 4, 5, 2, 3, 6)$,
in order to compute all the $6 \times 4 = 24$ equations, 
it is enough to compute the equations for the pyramid~\quadrefA; 
the other equations are obtained by cyclic permutations of the numerical indices.
Therefore, for each divisor, say $\divom{1}$, in~$\M_{0,8}$ we need to compute the divisors~$D_{I,J}$ in~$\M_{0,24}$ 
that project to~$\divom{v}$ via~$\pi_{\mcal{Q}}$. 
Since $\divom{1}$ is given by the partition
\[
 I_{\mathrm{om}} = (P_{14}, P_{41}, P_{15}, P_{61}), 
 \qquad
 J_{\mathrm{om}} = (P_{13}, P_{31}, P_{51}, P_{16}),
\]
it is enough to compute all the partitions $(I,J)$ of the set 
$\{P_{ij}, P_{ji} \colon \{i,j\}\in E_\oct \}$
of vertices of~$\GEdg$ that extend the partition $(I_{\mathrm{om}}, J_{\mathrm{om}})$.
There is exactly one such partition:
\[
 I = (P_{4*},P_{*4}, P_{15}, P_{61}, P_{25}, P_{62}),
 \qquad
 J = (P_{3*},P_{*3}, P_{51}, P_{16}, P_{52}, P_{26}).
\]
The computations for the other divisors in~$\M_{0,8}$ are reported in Table~\ref{table:equations}. 
From this table we see that the rule yielding the equations is the same as the graphical procedure in Rule~\ref{rule:equations}.

\begin{table}
\caption{Derivation of graphical procedure in Rule~\ref{rule:equations}.}
\begin{center}
\begin{adjustbox}{angle=90}
  \begin{tabular}{>{\centering\arraybackslash$}l<{$} >{\centering\arraybackslash$}l<{$} >{\centering\arraybackslash}m{3.2cm} >{\centering\arraybackslash}m{3.1cm} >{\centering\arraybackslash}m{2.4cm}}
		\toprule
		\text{Partition in } \M_{0,8} & \text{Partition(s) in } \M_{0,24} & Pyramidal bond & Extension(s) to octahedral bond(s)  & $\mu$-relations \\\midrule
    (P_{14},P_{41},{\color{col1}P_{15}},{\color{col1}P_{61}}) & (P_{4*},P_{*4},{\color{col1}P_{15}},{\color{col1}P_{61}}, {\color{colpb}P_{25}},{\color{colpb}P_{62}}) &
    \begin{tikzpicture}[scale=0.6]
			\coordinate (o) at (0,0);
			\node[svertex,label={[labelsty]right:$1$}] (1) at (1,0) {};
			\node[svertex,label={[labelsty]right:$5$},rotate around={120:(o)}] (5) at (1) {};
			\node[svertex,label={[labelsty]left:$2$}] (2) at (-1,0) {};
			\node[svertex,label={[labelsty,label distance=-2pt]left:$6$},rotate around={120:(o)}] (6) at (2) {};
			\draw[sdedge,col1] (6) -- (1);
			\draw[sdedge,col1] (1) -- (5);
			\draw[edge] (6) -- (2);
			\draw[edge] (2) -- (5);
    \end{tikzpicture}
    &
    \begin{tikzpicture}[scale=0.6]
      \coordinate (o) at (0,0);
			\node[svertex,label={[labelsty]right:$1$}] (1) at (1,0) {};
			\node[svertex,label={[labelsty]right:$5$},rotate around={120:(o)}] (5) at (1) {};
			\node[svertex,label={[labelsty]left:$2$}] (2) at (-1,0) {};
			\node[svertex,label={[labelsty,label distance=-2pt]left:$6$},rotate around={120:(o)}] (6) at (2) {};
			\draw[sdedge,col1] (6) -- (1);
			\draw[sdedge,col1] (1) -- (5);
			\draw[sdedge,colpb] (6) -- (2);
			\draw[sdedge,colpb] (2) -- (5);
			\node at (0,0) {\scriptsize$\ObondC$};
    \end{tikzpicture}
    &
    $\mu_{\Pbonddl}^{1} = \mu_{\ObondC}^{43}$\\\hline
    (P_{14},P_{41},{\color{col1}P_{15}},{\color{col1}P_{16}}) & (P_{4*},P_{*4}, {\color{col1}P_{15}},{\color{col1}P_{16}}, {\color{colpb}P_{52}}, {\color{colpb}P_{62}}) &
    \begin{tikzpicture}[scale=0.6]
			\coordinate (o) at (0,0);
			\node[svertex,label={[labelsty]right:$1$}] (1) at (1,0) {};
			\node[svertex,label={[labelsty]right:$5$},rotate around={120:(o)}] (5) at (1) {};
			\node[svertex,label={[labelsty]left:$2$}] (2) at (-1,0) {};
			\node[svertex,label={[labelsty,label distance=-2pt]left:$6$},rotate around={120:(o)}] (6) at (2) {};
			\draw[sdedge,col1] (1) -- (6);
			\draw[sdedge,col1] (1) -- (5);
			\draw[edge] (6) -- (2);
			\draw[edge] (2) -- (5);
    \end{tikzpicture}
    &
    \begin{tikzpicture}[scale=0.6]
      \coordinate (o) at (0,0);
			\node[svertex,label={[labelsty]right:$1$}] (1) at (1,0) {};
			\node[svertex,label={[labelsty]right:$5$},rotate around={120:(o)}] (5) at (1) {};
			\node[svertex,label={[labelsty]left:$2$}] (2) at (-1,0) {};
			\node[svertex,label={[labelsty,label distance=-2pt]left:$6$},rotate around={120:(o)}] (6) at (2) {};
			\draw[sdedge,col1] (1) -- (6);
			\draw[sdedge,col1] (1) -- (5);
			\draw[sdedge,colpb] (6) -- (2);
			\draw[sdedge,colpb] (5) -- (2);
			\node at (0,0) {\scriptsize$\ObondB$};
    \end{tikzpicture}
    & $\mu_{\Pbonddo}^{1} = \mu_{\ObondB}^{43} + \mu_{\ObondAc}^{43}$
     \\[-5pt]
    & (P_{4*},P_{*4},{\color{col1}P_{15}},{\color{col1}P_{16}}, {\color{colpb}P_{25}}, {\color{colpb}P_{26}}) &
    &
    \begin{tikzpicture}[scale=0.6]
      \coordinate (o) at (0,0);
			\node[svertex,label={[labelsty]right:$1$}] (1) at (1,0) {};
			\node[svertex,label={[labelsty]right:$5$},rotate around={120:(o)}] (5) at (1) {};
			\node[svertex,label={[labelsty]left:$2$}] (2) at (-1,0) {};
			\node[svertex,label={[labelsty,label distance=-2pt]left:$6$},rotate around={120:(o)}] (6) at (2) {};
			\draw[sdedge,col1] (1) -- (6);
			\draw[sdedge,col1] (1) -- (5);
			\draw[sdedge,colpb] (2) -- (6);
			\draw[sdedge,colpb] (2) -- (5);
			\node at (0,0) {\scriptsize$\ObondAc$};
    \end{tikzpicture}
    &
    \\\hline    
    (P_{15},P_{51},{\color{col1}P_{13}},{\color{col1}P_{41}}) & (P_{5*},P_{*5}, {\color{col1}P_{13}},{\color{col1}P_{41}}, {\color{colpb}P_{23}}, {\color{colpb}P_{42}}) &
    \begin{tikzpicture}[scale=0.6]
			\coordinate (o) at (0,0);
			\node[svertex,label={[labelsty]right:$1$}] (1) at (1,0) {};
			\node[svertex,label={[labelsty,label distance=-2pt]right:$4$},rotate around={60:(o)}] (4) at (1) {};
			\node[svertex,label={[labelsty]left:$2$}] (2) at (-1,0) {};
			\node[svertex,label={[labelsty]left:$3$},rotate around={60:(o)}] (3) at (2) {};
			\draw[sdedge,col1] (1) -- (3);
			\draw[sdedge,col1] (4) -- (1);
			\draw[edge] (3) -- (2);
			\draw[edge] (2) -- (4);
    \end{tikzpicture}
    &
    \begin{tikzpicture}[scale=0.6]
      \coordinate (o) at (0,0);
			\node[svertex,label={[labelsty]right:$1$}] (1) at (1,0) {};
			\node[svertex,label={[labelsty,label distance=-2pt]right:$4$},rotate around={60:(o)}] (4) at (1) {};
			\node[svertex,label={[labelsty]left:$2$}] (2) at (-1,0) {};
			\node[svertex,label={[labelsty]left:$3$},rotate around={60:(o)}] (3) at (2) {};
			\draw[sdedge,col1] (1) -- (3);
			\draw[sdedge,col1] (4) -- (1);
			\draw[sdedge,colpb] (2) -- (3);
			\draw[sdedge,colpb] (4) -- (2);
			\node at (0,0) {\scriptsize$\ObondB$};
    \end{tikzpicture}
    &
    $\mu_{\Pbondal}^{1} = \mu_{\ObondB}^{56}$\\\hline
    (P_{15},P_{51},{\color{col1}P_{13}},{\color{col1}P_{14}}) & (P_{5*},P_{*5}, {\color{col1}P_{13}},{\color{col1}P_{14}}, {\color{colpb}P_{23}}, {\color{colpb}P_{24}}) &
    \begin{tikzpicture}[scale=0.6]
			\coordinate (o) at (0,0);
			\node[svertex,label={[labelsty]right:$1$}] (1) at (1,0) {};
			\node[svertex,label={[labelsty,label distance=-2pt]right:$4$},rotate around={60:(o)}] (4) at (1) {};
			\node[svertex,label={[labelsty]left:$2$}] (2) at (-1,0) {};
			\node[svertex,label={[labelsty]left:$3$},rotate around={60:(o)}] (3) at (2) {};
			\draw[sdedge,col1] (1) -- (3);
			\draw[sdedge,col1] (1) -- (4);
			\draw[edge] (3) -- (2);
			\draw[edge] (2) -- (4);
    \end{tikzpicture}
    &
    \begin{tikzpicture}[scale=0.6]
      \coordinate (o) at (0,0);
			\node[svertex,label={[labelsty]right:$1$}] (1) at (1,0) {};
			\node[svertex,label={[labelsty,label distance=-2pt]right:$4$},rotate around={60:(o)}] (4) at (1) {};
			\node[svertex,label={[labelsty]left:$2$}] (2) at (-1,0) {};
			\node[svertex,label={[labelsty]left:$3$},rotate around={60:(o)}] (3) at (2) {};
			\draw[sdedge,col1] (1) -- (3);
			\draw[sdedge,col1] (1) -- (4);
			\draw[sdedge,colpb] (2) -- (3);
			\draw[sdedge,colpb] (2) -- (4);
			\node at (0,0) {\scriptsize$\ObondA$};
    \end{tikzpicture}
    & $\mu_{\Pbondao}^{1} = \mu_{\ObondA}^{56} + \mu_{\ObondC}^{56}$
     \\[-5pt]
    & (P_{5*},P_{*5}, {\color{col1}P_{13}},{\color{col1}P_{14}}, {\color{colpb}P_{32}}, {\color{colpb}P_{42}}) &
    &
    \begin{tikzpicture}[scale=0.6]
      \coordinate (o) at (0,0);
			\node[svertex,label={[labelsty]right:$1$}] (1) at (1,0) {};
			\node[svertex,label={[labelsty,label distance=-2pt]right:$4$},rotate around={60:(o)}] (4) at (1) {};
			\node[svertex,label={[labelsty]left:$2$}] (2) at (-1,0) {};
			\node[svertex,label={[labelsty]left:$3$},rotate around={60:(o)}] (3) at (2) {};
			\draw[sdedge,col1] (1) -- (3);
			\draw[sdedge,col1] (1) -- (4);
			\draw[sdedge,colpb] (3) -- (2);
			\draw[sdedge,colpb] (4) -- (2);
			\node at (0,0) {\scriptsize$\ObondC$};
    \end{tikzpicture}
    &
    \\\bottomrule
  \end{tabular}
\end{adjustbox}
\end{center}
\label{table:equations}
\end{table}

\subsection*{Justification of Rule~\ref{rule:edge_multiplicity}}

This follows from \cite[Section~4.1]{Gallet2019}: 
the multiplicities of the edges in this paper correspond to the degrees of the maps~$r_{i}^{k\ell}$ in the reference.

\subsection*{Justification of Rule~\ref{rule:parallelogram}}

Assuming that vertices $3$, $4$, $5$, $6$ are coplanar and that the vertices $1$ and $2$ lie symmetric with respect to that plane,
we claim that the plane quadrilateral~$12$ is either an antiparallelogram or a parallelogram. 
To show this claim, let $C_O$ be the configuration space of the octahedron for the motion we are considering, 
let $C_1$ and $C_2$ be configuration spaces of the pyramids~$\quadrefA$\ and~$\quadrefB$, 
and let $C_{12}$ be the configuration space of the plane quadrilateral~$12$. 
There are natural projection maps $f_1 \colon  C_O \longrightarrow C_1$, $f_2 \colon C_O \longrightarrow C_2$, $g_1 \colon C_1 \longrightarrow C_{12}$, $g_2 \colon C_2 \longrightarrow C_{12}$,
and an isomorphism $h \colon C_1 \longrightarrow C_2$ defined by reflecting the vertex~$1$ at the plane of the quadrilateral~$12$. 
We claim also that $g_2$ is an isomorphism (and $g_1 = g_2 \circ h$ is also an isomorphism). 
Assume, for a contradiction, that $g_2$ is a $2:1$ map. 
Then there is an automorphism $s \colon C_2 \longrightarrow C_2$ flipping the two points of any fiber of~$g_2$. 
We can define two proper subsets $X, Y \subset C_O$ in the following way: $X$ is the set of all $x \in C_O$ such that $f_2(x)=h \bigl(f_1(x)\bigr)$, and $Y$ is the set of all $y \in C_O$ such that $f_2(y)=s \bigl( h(f_1(y)) \bigr)$. 
These are two closed subsets which cover $C_O$. 
This is a contradiction to the irreducibility of~$C_O$, so $g_1$ and $g_2$ are isomorphisms.

To conclude the proof, we want to show that all four vertices of the plane quadrilateral~$12$ are simple, 
where the definition of the multiplicity of the vertex of a plane quadrilateral parallels Definition~\ref{definition:multiplicity} for multiplicity of dihedral angles.
In fact, a plane quadrilateral in which all vertices are simple is a parallelogram or an antiparallelogram.
It suffices to show that vertex~$3$ is simple. 
Let $D$ be the configuration space of the pyramid~$\quadrefC$; 
let $D_1$ be configuration space of the subgraph with vertices $1,3,5,6$, 
which is determined by the dihedral angle at the edge~$13$; 
let $D_2$ be configuration space of the subgraph with vertices $2,3,5,6$, 
which is determined by the dihedral angle at the edge~$23$; 
let $D_{12}$ be configuration space of the subgraph with vertices $3,5,6$, 
which is determined by the angle at~$3$. 
Then by the same argument as before, the natural projections $D_2 \longrightarrow D_{12}$ and $D_1 \longrightarrow D_{12}$ are isomorphisms. 
By the simplicity of the edge~$13$, it follows that the projection $C_1 \longrightarrow D_1$ is an isomorphism. 
Because the projections commute, it follows that the projection $C_{12} \longrightarrow D_{12}$ is also
an isomorphism. 
Hence the angle at~$3$ is simple and so the quadrilateral~$12$ is a parallelogram or an antiparallelogram.

\begin{figure}[ht]
  \centering
  \includegraphics[height=5cm]{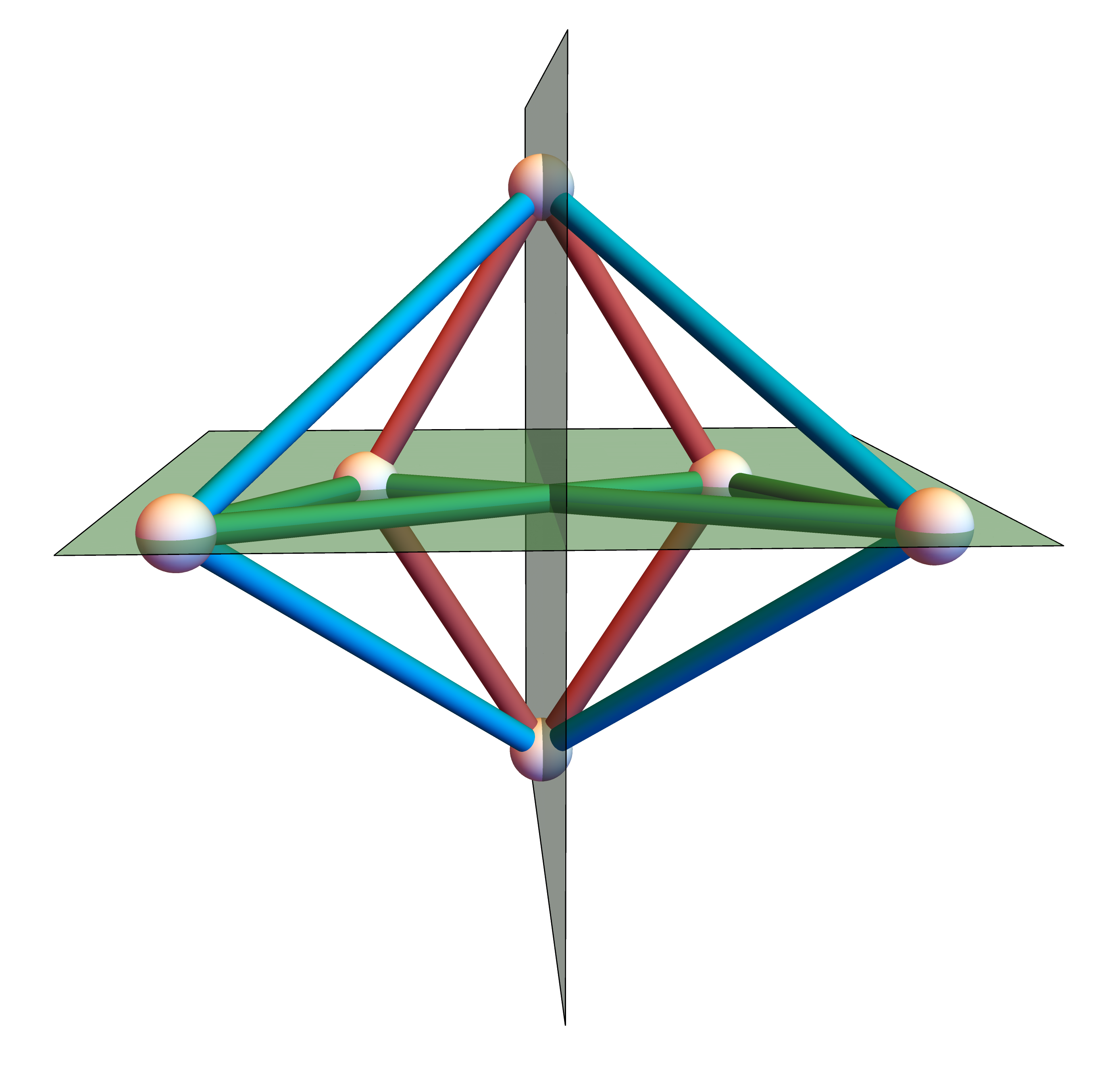}
  \caption{An example of a motion which is an instance of all three Bricard types.}
  \label{figure:alltypes}
\end{figure}

\begin{remark}
The case where the quadrilateral~$12$ is a parallelogram is probably purely hypothetical, but the case where the quadrilateral~$12$ is 
an antiparallelogram really does exist. An example is an octahedron with edge lengths 
\[ \ell_{13}=\ell_{14}=\ell_{23}=\ell_{24}=20, \ell_{15}=\ell_{16}=\ell_{25}=\ell_{26}=13, \ell_{35}=\ell_{46}=11, \ell_{36}=\ell_{45}=21 . \]
This flexible octahedron is an instance of all three Bricard types. It has two plane symmetries, one by the plane through $3$, $4$, $5$, $6$, and another
by the plane intersecting orthogonally in the symmetry line of the antiparallelogram, which makes it plane-symmetric. The line reflection
making it line-symmetric is the composition of the two plane reflections. See Figure~\ref{figure:alltypes} for an example.
\end{remark}

\bigskip
\bigskip
\textsc{(MG) International School for Advanced Studies/Scuola Internazionale Superiore di
Studi Avanzati (ISAS/SISSA), Via Bonomea 265, 34136 Trieste, Italy}\\
Email address: \texttt{mgallet@sissa.it}

\textsc{(GG) Johann Radon Institute for Computation and Applied Mathematics (RICAM), Austrian
Academy of Sciences}\\
Email address: \texttt{georg.grasegger@ricam.oeaw.ac.at}

\textsc{(JL, JS) Johannes Kepler University Linz, Research Institute for Symbolic Computation (RISC)}\\
Email address: \texttt{jan.legersky@risc.jku.at}, \texttt{jschicho@risc.jku.at}

\textsc{(JL) Department of Applied Mathematics, Faculty of Information Technology, Czech Technical University in Prague}

\end{document}